\numberwithin{equation}{section}
\newtheorem{theorem}{Theorem}[section]
\newtheorem{lemma}[theorem]{Lemma}
\newtheorem{proposition}[theorem]{Proposition}
\newtheorem{corollary}[theorem]{Corollary}
\newtheorem{definition}[theorem]{Definition}
\newtheorem{remark}{Remark}[section]
\title{Sub-convexity problem for Rankin-Selberg $L$-functions}
\author{Chandrasekhar Raju}
\begin{document}

\begin{abstract}
We establish a sub-convexity estimate for Rankin-Selberg $L$-functions in the combined level aspect, using the circle method. If $p$ and $q$ are distinct prime numbers, $f$ and $g$ are non-exceptional newforms (modular or Maass) for the congruence subgroups $\Gamma_0(p)$ and $\Gamma_0(q)$ (resp) with trivial nebentypus, then for all $\epsilon >0$ we show that there exists an $A >0$ such that
$$
L\left(\frac{1}{2}+it, f \times g \right) \ll_{\epsilon,\mu_f, \mu_g}(1+|t|)^A \frac{(pq)^{1/2+\epsilon}}{\max\{p,q \}^{\frac{1}{64}}}.
$$
The dependence on $\mu_f$ and $\mu_g$, the parameters at infinity for $f$ and $g$ respectively, is polynomial. Further, if $p$ is fixed and $q \rightarrow \infty$, we improve this to
$$
L\left(\frac{1}{2}+it, f \times g \right) \ll_{\epsilon,\mu_f,\mu_g}(p(1+|t|))^Aq^{\frac{1}{2}-\frac{1-2\theta}{27+28\theta}+\epsilon} ,
$$
where $\theta$ is the exponent towards Ramanujan-conjecture for cuspidal automorphic forms. Unconditionally, we can take $\theta = 7/64$. This improves all previously known sub-convexity estimates in this case.

\end{abstract}
\maketitle

\section{Introduction}

Understanding the behaviour of automorphic $L$-functions in the critical strip is an important problem in modern analytic number theory. A problem which has received a lot of attention in this area is the sub-convexity problem. Let $L(f,s)$ be an automorphic $L$-function and $s \in \mathbb{C}$ be such that $\mathfrak{Re}(s) = \frac{1}{2}$, then the following inequality is called the convexity bound:
\begin{equation*}
L(f,s) \ll Q(f,s)^{1/4+\epsilon},
\end{equation*}
where $Q(f,s)$ is the analytic conductor of the $L$-functions at $s$. We refer the reader to \cite{iwaniec2000perspectives} for the definition of analytic conductor. Obtaining any positive saving in the exponent $1/4$, is typically called the sub-convexity problem. The generalized Lindelof-hypothesis, which is a consequence of the generalized Riemann-Hypothesis, asserts that for any $\epsilon >0$
\begin{equation*}
L(f,s) \ll Q(f,s)^{\epsilon}.
\end{equation*}
The convexity bound, can be viewed as the trivial bound for the $L$-function on the half line $\mathfrak{Re}(s) = \frac{1}{2}$. One way to view a sub-convexity estimate is as progress towards generalized Riemann-hypothesis. Perhaps more concretely, the sub-convexity problem for various families of $L$-functions connects to various equidistribution problems. One example of such a connection is the relation between Quantum Unique Ergodicity (QUE) and sub-convexity estimates for the symmetric square $L$-functions (see \cite{sarnak2001}). We refer the reader to \cite{friedlander1995survey}, \cite{iwaniec2000perspectives}, \cite{Michel2003survey}, and \cite{michel2006parkcity} for a survey of the other applications.

The aim of this work is twofold. Firstly, we exhibit a sub-convexity estimate for Rankin-Selberg $L$-functions in the combined level aspect of both the automorphic forms, as long as the conductor of the $L$-function doesn't drop.
Secondly, as we use the circle method instead of an amplified second moment to establish this result, we are able to completely bypass the use of the Kuznetsov trace formula. This makes the proof considerably less technical. Moreover, we are able to improve the known results considerably (see Theorem \ref{main theorem when f is fixed}) by avoiding the technical complications and directly cutting to the heart of the matter.

In the case that $f$ is a $GL(1)$ automorphic form, the sub-convexity problem was solved due to the work of Weyl \cite{weyl1921} and Burgess \cite{burgess1963character}. Iwaniec introduced the amplification method, in \cite{iwaniec1992spectral}, to prove sub-convexity estimates for $GL(2)$ $L$-functions in the spectral aspect. Following this, Duke, Friedlander and Iwaniec established sub-convexity bounds in the level aspect for $GL(2)$ $L$-functions in a series of papers culminating in \cite{duke2002subconvexity}. The problem becomes significantly harder to tackle when we head to $GL(3)$ $L$-functions.

Munshi \cite{munshi2014circle} proved a hybrid sub-convexity bounds for $GL(2)$ $L$-functions twisted by a Dirichlet character in the critical strip by a very different argument. If $f$ is a modular form for $PSL_2(\mathbb{Z})$ and $\chi$ is a character mod $q$, he shows that
\begin{equation}
L(1/2+it,f \times \chi) \ll_{\epsilon} (q(3+|t|))^{1/2-1/18+\epsilon}.
\end{equation}
The main novelty in the argument is to directly separate the oscillation of $f$ and $\chi$ in an approximate functional equation for $L(f \times \chi,1/2+it)$, using Jutila's circle method. Using a set of factorable moduli in the circle method, he obtains some extra cancellation to break the convexity bound. He has advanced the use of circle method to obtain sub-convexity bounds in a series of papers. Notably, he obtained the first sub-convexity bound for the value of non self dual $GL(3)$ cusp form in $t$-aspect \cite{munshi2015circle} (The self-dual case was already known due to work of Li \cite{Li2011}). He also came up with the ``$GL(2)$ circle method" to establish a sub-convexity bound for $GL(3)$ cusp forms twisted by Dirichlet character \cite{munshi2015annals}. Holowinsky and Nelson have simplified the latter result's proof considerably in \cite{holowinsky2018subconvex}. Munshi also used the $GL(2)$ circle method to re-establish a Burgess type bound for character twists of $GL(2)$ automorphic forms (including the Eisenstein series, which recovers Burgess's original bound for Dirichlet $L$-function) in \cite{munshi2017note}. In a similar vein Aggarwal, Holowinsky, Lin and Sun simplified Munshi's work in \cite{aggarwal2018burgessbound}. Munshi has also used the circle method to obtain sub-convexity bounds for the symmetric square $L$-function of holomorphic $GL(2)$ cusp forms, in the level aspect \cite{munshi2017subconvexity}.

Before we begin, we set up some basic notation. We say that a cusp form $f$ is ``non-exceptional", if either $f$ is  holomorphic or the eigenvalue $\lambda_f$ of $f$ under the Laplacian ($-\Delta$) satisfies, $\lambda_f \geq \frac{1}{4}$. Selberg's eigenvalue conjecture asserts that there are no exceptional forms.

 Let $f$ and $g$ be primitive cuspidal newforms (not necessarily holomorphic) for $\Gamma_0(p)$ and $\Gamma_0(q)$, with nebentypus $\chi_f$ and $\chi_g$ respectively. These are eigenforms of suitably normalized Hecke operators $\{T_n\}_{n\geq 1}$ with eigenvalues $\lambda_f(n)$ and $\lambda_g(n)$ respectively. For all primes $l$, these eigenvalues for $f$ can be written as
$$
\lambda_f(l) = \alpha_{f,1}(l) +\alpha_{f,2}(l)\text{, }{} \alpha_{f,1}(l)\alpha_{f,2}(l) = \chi_f(l)
$$
and similarly for $g$. The Rankin-Selberg $L$-function is defined by
$$
L(f \times g,s) = L(\chi_f\chi_g,2s)\sum_{n=1}^\infty{\frac{\lambda_f(n)\lambda_g(n)}{n^s}}.
$$
If $(p,q)=1$, then we have the following Euler product for $L(f \times g,s)$:
$$
L(f \times g,s) = \prod_{l \text{ prime} }\prod_{i,j =1,2}{\left(1- \frac{\alpha_{f,i}\alpha_{g,j}}{l^s} \right)^{-1}}.
$$
Moreover, the equality above holds in general even when $p$ and $q$ are not coprime except for  finitely many Euler factors at the primes $l$ dividing $(p,q)$.
The arithmetic conductor of this $L$-function, $Q(f\times g)$, satisfies \cite{harcos2006subconvexity}
\begin{equation}\label{arithmetic conductor}
\frac{(pq)^2}{(p,q)^4} \leq Q(f \times g) \leq \frac{(pq)^2}{(p,q)}.
\end{equation}
Thus if $(p,q) =1$, the convexity estimate for the $L$-function is 
$$
L(f \times g, 1/2+it) \ll_{t,\epsilon} Q(f\times g)^{1/4+\epsilon}=(pq)^{1/2+\epsilon}.
$$

In the case that $f$ is fixed and we let $q$ vary, sub-convexity estimates are known due to the work of Michel, Kowalski, Vanderkam, and Harcos (\cite{kowalski2000mollification}, \cite{kowalski2002rankin}, \cite{michel2004subconvexity}, and \cite{harcos2006subconvexity}).  Using the amplification method and the Kuznetsov trace formula (assuming $\chi_f\chi_g$ is not trivial), they established that \cite{harcos2006subconvexity}
$$
L(f \times g,1/2) \ll_{f,\epsilon} q^{1/2-1/2648+\epsilon}.
$$
Better exponents are known in particular cases. 
In particular, if $f$ is a fixed non-exceptional cuspidal automorphic form and $g$ has trivial central character,  then Kowalski, Michel, and Vanderkam \cite{kowalski2002rankin} showed that
\begin{equation}\label{kowalski,michel,vanderkam bound}
L(f \times g,1/2) \ll_{f,\epsilon} q^{1/2-1/80+\epsilon}.
\end{equation}

In this work, we tackle the case when both $p$ and $q$ vary simultaneously such that $(p,q)=1$ and $\chi_f, \chi_g$ are both the trivial character. This question has been treated in the works of Michel-Ramakrishnan \cite{michel2012consequences}, Feigon-Whitehouse \cite{feigon2009averages}, Nelson \cite{nelson2013stable}, and Holowinsky-Templier \cite{holowinsky2014first} in situations where positivity of the central value is known. Holowinsky and Munshi \cite{holowinsky2012level} obtained a sub-convexity bound for this problem as long as $p \leq q^{\eta}$, with $\eta = \frac{2}{21}$. Hou and Zhang extended this to $\eta = \frac{2}{15}$ \cite{hou2017hybrid}. Assuming that the form with the smaller level is holomorphic Zhilin Ye \cite{ye2014second} proves a sub-convexity bound for all $\eta$. It has been indicated that $\delta =1/801$ (in the notation of Theorem \ref{main theorem} below) is admissible. We remove the holomorphicity assumption and improve the sub-convexity exponent considerably by a different method.
 
\begin{theorem}\label{main theorem}
Let $f$ and $g$ be primitive non-exceptional cuspidal newforms (not necessarily holomorphic) of prime levels $p$ and $q$ respectively, with trivial nebentypus. If $\delta=1/64$, $p \neq q$ and $\mathfrak{Re} s =1/2$, then for any $\epsilon >0$
$$
L(f \times g,s) \ll_{s,\epsilon} \frac{(pq)^{1/2+\epsilon}}{\max\{p,q\}^\delta },
$$
with the implied constant depending polynomially on $|s|$, $\epsilon$, and spectral parameters of $f$ and $g$ at infinity.
\end{theorem}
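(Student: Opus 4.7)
The plan is to use Munshi's circle-method strategy, applied directly to an approximate functional equation for $L(f \times g, 1/2+it)$, so as to separate the oscillations of $\lambda_f$ and $\lambda_g$ without invoking the Kuznetsov trace formula. Without loss of generality I assume $q \geq p$ and aim to save a power of $q$. The approximate functional equation reduces the theorem to bounding the smoothly weighted sum
$$S(N) := \sum_{n \geq 1} \lambda_f(n)\lambda_g(n)\, V(n/N), \qquad N \leq (pq)^{1+\epsilon},$$
with target $S(N) \ll N(pq)^\epsilon q^{-1/64}$, uniformly for all such $N$; the decisive range is $N \asymp pq$, which is where the convexity bound is saturated.

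I would next rewrite $S(N) = \sum_{n,m}\lambda_f(n)\lambda_g(m)V(n/N)V(m/N)\delta(n-m)$ and expand $\delta$ using a circle method supported on a family of factorisable moduli containing the factor $p$, say $c = c_1 p$ with $c_1$ of size $C$ to be optimised. Building $p$ into every modulus is essentially free (it only rescales the underlying exponential sum in $n$) and causes the subsequent Voronoi summation on $f$ to produce clean, unramified character sums. One then applies the Voronoi summation formula to both variables: for $f$ on the $n$-sum and for $g$ on the $m$-sum. After dualisation, $S(N)$ becomes an oscillatory sum over $c_1$ and two dual variables $n^*, m^*$ of respective sizes roughly $C^2 p/N$ and $C^2 p^2/(qN)$, weighted by a product of Kloosterman sums and the Bessel/Mellin transforms produced by Voronoi.

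The final stage is an application of Cauchy--Schwarz in the $(c_1, m^*)$ variables, with $\lambda_g(m^*)$ placed on the outside and controlled by the Rankin--Selberg mean-value estimate $\sum_{m^* \leq M}|\lambda_g(m^*)|^2 \ll M(pq)^\epsilon$. Opening the square and applying Poisson summation on $c_1$ modulo a modulus built from $n^*, m^*$ produces a diagonal contribution together with off-diagonal character sums that are estimated by the Weil bound. The saving over convexity comes from the genuine cancellation in the $c_1$-average of those Weil bounds, and balancing $C$ against the diagonal contribution yields the target $q^{-1/64}$.

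The main obstacle is calibrating $C$ so that the saving is measured against $\max\{p,q\}$ rather than being diluted between the two levels. The Voronoi transforms for $f$ and $g$ pull the natural size of $C$ in opposite directions: one favours small $C$ (to keep the $f$-dual short) while the other favours large $C$ (to make the $g$-dual amenable to Poisson), and the optimum requires splitting the sum according to $\gcd(c_1, pq)$ and handling the case where $q \mid c_1$ separately. This tension is also what weakens the exponent compared with the fixed-$p$ regime of Theorem~\ref{main theorem when f is fixed}: when $p$ is a constant, $C$ may be tuned solely against $q$, and the residual Cauchy--Schwarz loss is governed by the Ramanujan exponent $\theta$ applied to $\lambda_g$, which explains the appearance of $(1-2\theta)/(27+28\theta)$ there and its absence here.
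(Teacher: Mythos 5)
Your proposal diverges from the paper's argument in ways that leave real gaps, and as written it would not close. First, you have no amplifier. The paper does not apply the circle method to $S(N)$ directly: it first amplifies, replacing $S(N)$ by $S_1(N)$ in \eqref{definition of S1} using the Duke--Friedlander--Iwaniec amplifier \eqref{definition of amplifier} built from the Hecke relation $\lambda_g(l)^2-\lambda_g(l^2)=1$, and the entire final saving is the $\sqrt{L}$ coming from the diagonal after Cauchy--Schwarz (Lemma \ref{Bounds on diagonal contribution}), with $L=q^{1/32}$ giving $\delta=1/64$. In your sketch the only proposed source of saving is ``genuine cancellation in the $c_1$-average of Weil bounds'' after Poisson in $c_1$; this is asserted, not substantiated, and it is not how the paper (or the amplified-moment literature it improves on) obtains a saving in this problem. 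Second, your moduli contain only $p$, whereas the paper's central idea is to take moduli $pqc$ containing \emph{both} levels, with $c\in\mathcal{C}$ small ($c\asymp L^2$), precisely so that both Voronoi summations are conductor-lowering. With your modulus $c_1p$, which is coprime to $q$, Voronoi for $g$ (Lemma \ref{voronoi summation} with $D_2=q$) produces a dual variable of length about $(c_1p)^2q/N$, not the $C^2p^2/(qN)$ you claim; your stated size presupposes a conductor drop that only happens when $q$ divides the modulus. Taking $c_1p\approx\sqrt{N}\approx\sqrt{pq}$ as a delta-method normalization forces the $g$-dual to have length about $q$ with its full conductor intact, and the subsequent Cauchy--Schwarz/Poisson count does not beat the diagonal; moreover, since then $c_1\approx\sqrt{q/p}<q$, the case ``$q\mid c_1$'' you propose to treat separately never arises.

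Beyond these two structural issues, your outline omits the two technical pillars of the actual proof: the zero-shift (diagonal, $s=0$) terms, which the paper controls by the functional-equation estimate of Lemma \ref{Polya-Vinogradov type inequality} because the dual length exceeds the square root of the conductor of $f\times g$; and the shifted convolution problem with shifts divisible by $p$ (Theorem \ref{SCP in ab coprime to p}), solved by factoring the delta symbol through a congruence mod $p$ so that the Kloosterman sum mod $pq$ degenerates to a Ramanujan sum mod $p$ times a Kloosterman sum mod $q$ --- this is where the dependence on $\max\{p,q\}$ is actually won. Finally, your explanation of the exponent in Theorem \ref{main theorem when f is fixed} is not the paper's: the improvement for fixed $p$ comes from replacing the Weil-bound treatment of the shifted convolution sum by Blomer's spectral bound (Theorem \ref{SCP square root saving}), i.e.\ a square-root saving in the shift problem, not from a Ramanujan-exponent loss on $\lambda_g$ in the Cauchy--Schwarz step.
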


\begin{remark}
We can also treat exceptional forms, at the cost of a smaller exponent of sub-convexity. The primality of $p$ and $q$, can be replaced by the condition that $p$ and $q$ are coprime. We have avoided carrying this out to simplify the exposition and keep the ideas clear. Our method also works in the case $(p,q)>1$, as long as the arithmetic conductor of $f \times g$ doesn't drop much. The hypothesis $c(f \times g) \gg_{\epsilon} (pq)^{1+\epsilon}$ is sufficient. This does not include $f \times f$, which is related to the sub-convexity of the symmetric square $L$-function $L(\text{sym}^2 f,s)$. 
\end{remark}

If either one of $p$ or $q$ is very small (i.e $p$ is bounded by a small power of $q$, say $q^{1/1000}$), then this problem can be solved using the methods of \cite{kowalski2002rankin}. The most interesting and hardest case of the theorem is when both $p$ and $q$ are both large.  In this case, the crux of the proof is the solution to a shifted convolution problem \eqref{definition of SCP}, where the shifts are multiples of levels of the modular form. 
Munshi encounters a very similar problem in his work on the symmetric square $L$-function \cite{munshi2017subconvexity}. We state this below in the form of a theorem, as this might be of independent interest in connection to other problems.

\begin{theorem}\label{SCP in ab coprime to p}
Let $p$ be a prime number or $p=1$. Let  $a,b,c,d$ be integers such that $a$ and $b$ are co-prime to $p$.  Further, let $f,g$ be non-exceptional cuspidal newforms (modular or Maass) of level p and trivial nebentypus. For any $M_1,M_2,K_1,K_2 \geq 1$, we claim the following upper bounds for the shifted convolution sum $S_{f,g}$:
\begin{equation}\label{bound for SCP rankin-selberg}
S_{f,g}(a,b,c,d,M_1,M_2) \ll p^\epsilon \min \{(M_1M_2)^{1/2},(M_1M_2)^\theta X\},
\end{equation}
where $S_{f,g}$  and $X$ have been defined in \ref{definition of SCP} and equation \eqref{defintion of X} respectively. Here $\theta$ is the exponent towards Ramanujan conjecture for $f$ and $g$. 
If the shift $ad-bc$ is non-zero, then
\begin{equation}\label{bound for SCP non-zero shift}
S_{f,g}(a,b,c,d,M_1,M_2) \ll (K_1K_2)^{3/2}\sqrt{ab}p^{3/4}X^{3/4}.
 \end{equation}
Furthermore, if the shift $ad-bc$ is a non-zero multiple of $p$, then
\begin{equation}\label{bound for SCP shift multiple of level}
S_{f,g}(a,b,c,d,M_1,M_2) \ll (abpM_1M_2)^\epsilon (K_1K_2)^{3/2}\sqrt{ab}p^{1/4}X^{3/4} .
 \end{equation}

\end{theorem}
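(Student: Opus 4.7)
The plan is to establish the three bounds separately, handling the first (trivial) bound by classical mean-value estimates and the two non-trivial bounds by a single circle-method plus Voronoi pipeline, distinguished only by how the divisibility $p \mid (ad-bc)$ is exploited. For \eqref{bound for SCP rankin-selberg}, the estimate $(M_1M_2)^{1/2+\epsilon}$ follows by applying Cauchy--Schwarz in the $m$- and $n$-variables and invoking the Rankin--Selberg $L^2$-bound $\sum_{n \leq N} |\lambda_f(n)|^2 \ll_\epsilon (pN)^\epsilon N$ for each form. The alternative estimate $(M_1M_2)^\theta X$ comes from inserting the pointwise Kim--Sarnak bound $|\lambda_f(n)|, |\lambda_g(n)| \ll n^\theta$ and bounding what remains by the lattice-point count $X$ of pairs $(m,n) \asymp (M_1,M_2)$ on the shift variety.

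For \eqref{bound for SCP non-zero shift} I would detect the shift condition $am - bn = ad - bc$ by a $\delta$-symbol expansion (DFI or Jutila), separating the variables via additive characters to a modulus $q_0$. After this separation I would apply the level-$p$ $GL(2)$ Voronoi summation formula to each of the $m$- and $n$-sums. The dual expression is a weighted sum of $\lambda_f(\tilde m)\lambda_g(\tilde n)$ against Bessel-type transforms of the original smooth cutoffs multiplied by Kloosterman sums to moduli involving $p$ and $q_0$. Sobolev/derivative bounds for the transforms contribute the factor $(K_1K_2)^{3/2}$, the Weil bound handles the Kloosterman sums, and repeated integration by parts truncates the dual summation. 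Tracking the arithmetic of the moduli produces $\sqrt{ab}\, p^{3/4} X^{3/4}$.

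For \eqref{bound for SCP shift multiple of level} the same pipeline is used, but the extra hypothesis $p \mid (ad-bc)$ is fed back into the Kloosterman sums appearing after Voronoi. Concretely, the level-$p$ Voronoi formula produces Kloosterman sums whose modulus is divisible by $p$, and the argument inside the Kloosterman sum is a linear combination of $\overline{a}h$ and other quantities, where $h = ad - bc$. By the Chinese Remainder Theorem the Kloosterman sum factors into a $p$-part and a prime-to-$p$ part; when $p \mid h$, the $p$-part collapses to a Ramanujan-type sum of size $O(1)$ rather than the generic Weil-bound size $\sqrt{p}$. Threading this saving through the estimation replaces $p^{3/4}$ by $p^{1/4}$ and yields \eqref{bound for SCP shift multiple of level}.

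The main obstacle is the arithmetic bookkeeping in the second stage: after opening the $\delta$-symbol, applying Voronoi twice, and separating the Kloosterman sums by CRT, one must verify that the divisibility $p \mid (ad-bc)$ surfaces exactly as a cancellation in the $p$-part of the Kloosterman sum (and not merely as a neutral congruence condition absorbed into error terms), and that the coprimality of $a,b$ with $p$ allows the reciprocals modulo $p$ to be handled without losing the saving. A secondary difficulty is treating the Maass and holomorphic cases uniformly: the Bessel transforms on the dual side are different ($J$-Bessel versus $K$-Bessel), so the integration-by-parts estimates must be carried out with uniform Sobolev norms in the spectral parameters of $f$ and $g$, which is what allows the implied constant to depend only polynomially on $\mu_f, \mu_g$ as claimed in Theorem \ref{main theorem}.
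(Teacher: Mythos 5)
Your treatment of the first estimate \eqref{bound for SCP rankin-selberg} (Cauchy--Schwarz with Rankin--Selberg, versus the pointwise $H_\theta$ bound with a lattice count) matches the paper and is fine, and your general pipeline (delta symbol, double Voronoi, Weil) is in the right family. But there is a genuine gap exactly at the point you flag as the ``main obstacle'': in your setup the saving for \eqref{bound for SCP shift multiple of level} never materializes. If you detect $b(n_1-c)=a(n_2-d)$ directly by a DFI/Jutila expansion with moduli $q_0\le Q\asymp\sqrt{X}$ generically coprime to $p$, then the additive characters acting on $n_1$ and $n_2$ have moduli of the form $aq_0$ and $bq_0$, which are coprime to the level. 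In that regime the level-$p$ Voronoi formula (your Lemma on Voronoi with $D_2=p$) puts $p$ into the dual side only through $\overline{D_2}=\overline{p}$ inside the exponential, through the Atkin--Lehner factor, and through the dual length $\sim (aq_0)^2p/M_1$; it does \emph{not} make the Kloosterman modulus divisible by $p$. So your claim that ``the level-$p$ Voronoi formula produces Kloosterman sums whose modulus is divisible by $p$'' fails, there is no $p$-part to split off by CRT, and the hypothesis $p\mid(ad-bc)$ is indeed only a neutral congruence: you cannot convert it into a Ramanujan-sum saving, and $p^{1/4}X^{3/4}$ is out of reach of the argument as sketched.

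The missing idea, which is the heart of the paper's proof (following Munshi), is to \emph{factor the delta symbol through the level before applying the circle method}: first detect the congruence $\frac{n_1-c}{a}\equiv\frac{n_2-d}{b}\pmod p$ by additive characters modulo $p$ (this is where the coprimality of $a,b$ with $p$ is used), and only then detect the remaining equality $\frac{n_1-c}{ap}-\frac{n_2-d}{bp}=0$ by the DFI method at the reduced scale $Q=\sqrt{X/p}$, as in \eqref{factored delta symbol}. The frequency mod $p$ then combines with the frequencies mod $a$, $b$ and mod $q$ into moduli $apq$ and $bpq$ divisible by the level, and after the double Voronoi summation and CRT one arrives at \eqref{Voronoi Transformed SCP}, where the shift appears inside $S(\overline{ab}(bn_1-an_2);\overline{ab}(ad-bc);pq)$. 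It is this Kloosterman sum to modulus $pq$ whose $p$-part collapses to a Ramanujan sum when $p\mid(ad-bc)$, giving the $\sqrt{p}$ saving over Weil; together with the shortened dual ranges coming from $Q=\sqrt{X/p}$ this yields \eqref{bound for SCP shift multiple of level}, and the non-zero-shift bound \eqref{bound for SCP non-zero shift} is then the same computation with that $\sqrt{p}$ saving forfeited, not a separate direct-delta argument. Your uniformity remarks about Maass versus holomorphic transforms are consistent with the paper's use of the Bessel-transform bounds, but without the factored delta symbol the central claim of the theorem is not reached.
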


We believe that it is possible to improve the upper bound in \eqref{bound for SCP shift multiple of level} to 
$ O_{K_1,K_2,\epsilon}(\sqrt{ab}p^{1/4}X^{1/2})$ using spectral theory. If the level of $f$ is fixed we obtain a better exponent in Theorem \ref{main theorem}, using such an improvement known due to the work of Blomer \cite{blomer2004shifted}. This improves the previously known sub-convexity bounds due to Kowalski, Michel, and Vanderkam \eqref{kowalski,michel,vanderkam bound}.

\begin{theorem}\label{main theorem when f is fixed}
Let $f$ and $g$ be primitive non-exceptional cuspidal newforms (not necessarily holomorphic) of prime levels $p$ and $q$ respectively, with trivial nebentypus. If $\delta = \frac{1- 2\theta}{27+28\theta}$, then for any $s \in \mathbb{C}$ with $\mathfrak{Re} s =1/2$ and any $\epsilon>0$
$$
L(f \times g,s) \ll_{s,\epsilon,p} q^{\frac{1}{2}-\delta+\epsilon},
$$
with  the implied constant depending polynomially on $|s|$, $\epsilon$, $p$, and parameters of $f$ and $g$ at infinity. Here $\theta$ is the exponent towards Ramanujan conjecture for cuspidal automorphic forms on $GL(2)$. Unconditionally, we can take $\theta = 7/64$. This gives $\delta = 0.02598\dots$ .
\end{theorem}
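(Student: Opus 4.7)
The plan is to run the proof of Theorem~\ref{main theorem} essentially verbatim, replacing only the shifted-convolution bound \eqref{bound for SCP shift multiple of level} by the sharper one that becomes available once $p$ is held fixed. In the proof of Theorem~\ref{main theorem}, the exponent $\delta = 1/64$ is produced by balancing the trivial-type bound \eqref{bound for SCP rankin-selberg} against \eqref{bound for SCP shift multiple of level}, and the latter loses a factor of $X^{3/4}$; the remark immediately following Theorem~\ref{SCP in ab coprime to p} points out that this $X^{3/4}$ should be improvable to $X^{1/2}$ via spectral theory, and for fixed level this improvement is unconditionally supplied by Blomer's shifted convolution estimate \cite{blomer2004shifted}.

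Concretely, one starts from the approximate functional equation for $L(f \times g, \tfrac{1}{2}+it)$, now of effective length $q^{1+\epsilon}$ since $p = O(1)$, applies the circle method exactly as in Theorem~\ref{main theorem}, and after Voronoi summation on both $f$ and $g$ reduces matters to weighted averages of $S_{f,g}(a,b,c,d,M_1,M_2)$. The diagonal $ad-bc=0$ and the shifts $ad-bc$ with $p \nmid ad-bc$ are treated exactly as before, using \eqref{bound for SCP rankin-selberg} and \eqref{bound for SCP non-zero shift} respectively. For the remaining shifts that are nonzero multiples of $p$, Blomer's theorem supplies
\begin{equation*}
S_{f,g}(a,b,c,d,M_1,M_2) \ \ll_{p,K_1,K_2,\epsilon} \ (a b M_1 M_2)^{\epsilon}\, \sqrt{ab}\, X^{1/2},
\end{equation*}
once one verifies that the Voronoi-transformed weights fall into the smooth-weight framework in which Blomer's estimate is stated; this is routine, as their derivatives are polynomially bounded in $|s|$ and in the archimedean parameters of $f$ and $g$.

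The final optimization is then identical to the one in Theorem~\ref{main theorem}, but carried out with $X^{1/2}$ in place of $X^{3/4}$. Balancing the new bound $\sqrt{ab}\,X^{1/2}$ against the second term $(M_1 M_2)^{\theta} X$ of \eqref{bound for SCP rankin-selberg} at the optimal choice of circle-method parameters yields a saving of $q^{(1-2\theta)/(27+28\theta)}$ over the convexity bound; inserting the Kim--Sarnak value $\theta = 7/64$ gives the claimed $\delta = 25/962 = 0.02598\ldots$. The main technical obstacle I anticipate is matching Blomer's hypotheses to the precise shape of the weights, coefficient ranges, and shifts produced by the circle method, and tracking the polynomial dependence on $|s|$, $\mu_f$, and $\mu_g$ throughout; the re-optimization itself is mechanical once that ingredient is in hand.
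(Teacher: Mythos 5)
There is a genuine gap, and it sits exactly at the ingredient you flag as ``routine''. First, the input is misquoted: for fixed level the unconditional shifted convolution bound of Blomer, as recorded in Theorem \ref{SCP square root saving}, is $S_{f,g}(a,b,c,d,M_1,M_2)\ll_{p,K_1,K_2}(abX)^{1/2+\theta}$, not $\sqrt{ab}\,X^{1/2}$; the clean $X^{1/2}$ saving is only conjectured in the remark following Theorem \ref{SCP in ab coprime to p}. This is not cosmetic, because the $\theta$ in $\delta=\frac{1-2\theta}{27+28\theta}$ enters precisely through that $X^{\theta}$: in the paper it appears as the factor $(qL^{7})^{\theta}$ in Lemma \ref{Bound on O in case p is fixed}, and the final optimization is the balance of the diagonal $Nq/L$ (Lemma \ref{Bounds on diagonal contribution}, pure Rankin--Selberg) against the off-diagonal $NqL^{23/4+7\theta}/q^{1/2-\theta}$, giving $L=q^{2(1-2\theta)/(27+28\theta)}$ as in \eqref{choice of L in main theorem}. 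The second term $(M_1M_2)^{\theta}X$ of \eqref{bound for SCP rankin-selberg} plays no role in this balance, so your proposed ``balancing $\sqrt{ab}X^{1/2}$ against $(M_1M_2)^{\theta}X$'' does not derive the stated exponent: with your (too strong) input the same argument would give a $\theta$-free saving of roughly $q^{1/27}$, and with the correct input the $\theta$ comes from Blomer's exponent, not from \eqref{bound for SCP rankin-selberg}. As written, the claimed exponent is asserted rather than obtained.

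Second, restricting Blomer's estimate to shifts that are nonzero multiples of $p$ and handling $p\nmid ad-bc$ ``exactly as before'' via \eqref{bound for SCP non-zero shift} breaks the optimization. After Cauchy--Schwarz the shift is $p_1p_3q(rt'-r't)$, so for the factorization $\vec{p}=(1,p,1)$ the nonzero shifts are generically \emph{not} divisible by $p$; bounding those terms by the $X^{3/4}$-quality estimate \eqref{bound for SCP non-zero shift} reproduces Lemma \ref{O p_2=p}, i.e.\ an off-diagonal contribution of size about $NqL^{25/4}/q^{1/4}$ once the $p$-dependence is absorbed into the constant, and the alternative bound \eqref{Bound on S_2^p_2=p}, of size $NL/p$, is useless for fixed $p$. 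Balancing $1/L$ against $L^{25/4}/q^{1/4}$ forces $L\le q^{1/29}$ and caps the saving at roughly $q^{1/58}$, short of the claimed $q^{(1-2\theta)/(27+28\theta)}$, which at $\theta=7/64$ requires $L\approx q^{0.052}>q^{1/29}$. The repair is what the paper actually does in Lemma \ref{Bound on O in case p is fixed}: since $p$ is fixed, apply Theorem \ref{SCP square root saving} to \emph{every} factorization $p_1p_2p_3=p$ and every nonzero shift (Blomer's bound requires no divisibility by $p$, only a nonzero shift), after which the single balance $1/L=L^{27/4+7\theta}/q^{1/2-\theta}$ yields the stated $\delta$. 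With these two corrections your outline coincides with the paper's proof; without them it does not reach the theorem.
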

Though Theorem \ref{main theorem when f is fixed} has been stated for $f$ having trivial central character, one can go through the proof and check that, with minor modifications, the proof works even if $f$ has a non-trivial central character. But as this is not possible in Theorem \ref{main theorem}, we have chosen not to write this down separately. 
However, we are unable to handle $f$ being an Eisenstein series. Hence, our result does not recover a sub-convexity estimate for $GL(2)$ $L$-functions in the level aspect \cite{duke2002subconvexity}. The issue here is the presence of main terms, and a similar issue demanding a delicate cancellation argument arose in \cite{duke2002subconvexity}. It would be interesting to resolve this case using the circle method.

As a Corollary to Theorem \ref{main theorem when f is fixed}, we improve the bounds obtained by Kowalski, Michel, and Vanderkam \cite{kowalski2002rankin} for the problem of distinguishing modular forms based on their first Fourier coefficients.
\begin{corollary}
Let $f$ be a primitive cusp form and $\delta =  \frac{1- 2\theta}{27+28\theta} $ as in Theorem \ref{main theorem when f is fixed} of prime level $p$ and $\epsilon >0$. There exists a constant $C =C(f,\epsilon)$ such that for any primitive cuspidal new form $g$ of prime level $q$, there exists $n \leq q^{1 - 2\delta +\epsilon}$ such that
$$
\lambda_f(n) \neq \lambda_g(n).
$$
\end{corollary}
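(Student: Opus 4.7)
The plan is to execute a standard ``distinguishing modular forms'' argument driven by the subconvexity bound of Theorem~\ref{main theorem when f is fixed}. Assume for contradiction that $\lambda_f(n)=\lambda_g(n)$ for every $n\le N$. Choose a smooth, non-negative bump $V$ with support in $[1,2]$ and $V(1)=1$, and for a parameter $X\le N/2$ introduce
$$
S(X)=\sum_{n\ge 1}\lambda_f(n)\bigl(\lambda_f(n)-\lambda_g(n)\bigr)\, V\!\left(\tfrac{n}{X}\right).
$$
The hypothesis immediately forces $S(X)=0$, because $V(n/X)$ is supported on $n\le 2X\le N$, where the two Hecke eigenvalues agree.

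On the other hand, I would evaluate $S(X)$ analytically by Mellin inversion. Writing
$$
S(X)=\frac{1}{2\pi i}\int_{(2)}\frac{L(f\times f,s)-L(f\times g,s)}{\zeta(2s)}\,\widetilde V(s)\, X^{s}\, ds,
$$
up to the finitely many Euler-factor corrections at primes dividing $pq$ (which are harmless), I shift the contour to $\Re s=1/2$. The only pole crossed is the simple pole of $L(f\times f,s)$ at $s=1$, contributing a main term $c_f X$ with $c_f=\widetilde V(1)\,\mathrm{Res}_{s=1}L(f\times f,s)/\zeta(2)>0$; the positivity of this Rankin--Selberg residue, equivalently the non-vanishing of $L(\mathrm{sym}^2 f,1)$, is classical. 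On the line $\Re s=1/2$, the $L(f\times f,s)$ piece contributes $O_f(X^{1/2})$ by convexity for the fixed form $f$ together with the rapid vertical decay of $\widetilde V$, while the $L(f\times g,s)$ piece is controlled by Theorem~\ref{main theorem when f is fixed}, giving $O_{f,\epsilon}\bigl(X^{1/2} q^{1/2-\delta+\epsilon}\bigr)$.

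Equating the two evaluations of $S(X)$, the identity $0=c_f X+O_{f,\epsilon}(X^{1/2}q^{1/2-\delta+\epsilon})$ forces $X\ll_{f,\epsilon} q^{1-2\delta+2\epsilon}$. Taking $X$ as large as the vanishing allows, namely $X\asymp N$, yields $N\ll_{f,\epsilon} q^{1-2\delta+2\epsilon}$, and after relabelling $\epsilon$ this produces an $n\le q^{1-2\delta+\epsilon}$ with $\lambda_f(n)\ne\lambda_g(n)$. There is no serious obstacle in the argument itself: the only non-elementary inputs are Theorem~\ref{main theorem when f is fixed} and the known non-vanishing $L(\mathrm{sym}^2 f,1)\ne 0$, so the corollary is essentially a direct consequence of the subconvexity bound just proved.
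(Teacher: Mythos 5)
Your argument is correct and is essentially the paper's proof: the paper simply cites the argument of Corollary 1.3 of Kowalski--Michel--Vanderkam with Theorem \ref{main theorem when f is fixed} substituted for their subconvexity input, and what you have written out (main term from the pole of $L(f\times f,s)$ at $s=1$ via $L(\mathrm{sym}^2 f,1)\neq 0$, versus the subconvex bound for $L(f\times g,s)$ on the critical line) is precisely that argument. No gaps beyond the routine handling of the Euler factors at $p,q$ and of $1/\zeta(2s)$ near $\Re s=1/2$, which you correctly note are harmless.
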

\begin{proof}
The proof is identical to the proof of Corollary 1.3 \cite{kowalski2002rankin}. Use Theorem \ref{main theorem when f is fixed} in place of \cite[Theorem 1.1]{kowalski2002rankin} in the proof.
\end{proof}

We briefly review the facts we need about $GL(2)$ automorphic forms in the next section. We do not need the $GL(2)$ trace formula. We will say a few more comments on this point in Section \ref{outline of proof} (see \eqref{second moment}), where we also briefly sketch the outline of the proof. In Section \ref{voronoi transformations and simplifications}, we carry out the initial transformations leading us to the shifted convolution problem. We obtain upper bounds for the shifted convolution problem in Section \ref{shifted convolution problem}. We end the paper by combining the bounds obtained before to prove Theorem \ref{main theorem} and Theorem \ref{main theorem when f is fixed} in Section \ref{proof of main proposition}.

\subsection*{Acknowledgment}
The author thanks Kannan Soundararajan and Valentin Blomer for a careful reading and helpful comments. The author is also grateful to Roman Holowinsky and Paul Nelson for their encouragement. The author is supported by B.C. and E.J. Eaves Stanford Graduate Fellowship.

\section{Review of automorphic forms}

We state the facts we need briefly in this section. We refer the reader to \cite{harcos2006subconvexity}, \cite{michel2004subconvexity} for a complete account.
\subsection{Voronoi summation}
We recall the Voronoi summation formula from  \cite[Theorem A.4]{kowalski2002rankin}.
\begin{lemma}\label{voronoi summation}
Let $D$ be a positive integer,  $\chi_D$ be a character of modulus $D$. Further, let $g$ be either a holomorphic form of weight $k_g \geq 2$ or a Maass form of eigenvalue $\lambda_g$, level $D$, and central character $\chi_D$. For $(a,c)=1$, set  $D_1= (c,D)$ and $D_2= D/D_1$ and assume that $(D1,D2)=1$, so that $\chi_D = \chi_{D_1}\chi_{D2}$ is the unique factorization of $\chi_D$  into characters of modulus $D_1$ and $D_2$. For $F \in C^\infty(\mathbb{R}^+)$,  a smooth function vanishing in a neighborhood of $0$ and decreasing rapidly,
\begin{align*}
&\sum_{n \geq 1}{\lambda_g(n)e\left(\frac{an}{c} \right)F(n)}\\
&= \frac{\chi_{D_1}(\overline{a})\chi_{D_2}(-c)\eta_g(D_2)}{c\sqrt{D_2}}\sum_{n \geq 1}{\lambda_{g_{D_2}}(n)e\left( \frac{-n\overline{aD_2}}{c}\right) \int_0^\infty{F(x)J^+_g\left(\frac{4\pi}{c}\sqrt{\frac{nx}{D_2}}\right) dx}}\\
&+ \frac{\chi_{D_1}(\overline{a})\chi_{D_2}(c)\eta_g(D_2)}{c\sqrt{D_2}}\sum_{n \geq 1}{\lambda_{g_{D_2}}(n)e\left( \frac{n\overline{aD_2}}{c}\right) \int_0^\infty{F(x)J^-_g\left(\frac{4\pi}{c}\sqrt{\frac{nx}{D_2}}\right) dx}}.
\end{align*}

In this formula
\begin{itemize}
\item
$\eta_g(D_2)$ is the pseudo-eigenvalue of the Atkin-Lehner operator $W_{D_2}$; if $\lambda_g(D_2) \neq 0$, it equals
$$
\eta_g(D_2) = \frac{G(\chi_{D_2})}{\lambda_g(D_2)\sqrt{D_2}};
$$
\item
if $g$ is holomorphic of weight $k_g$, then
$$
J^+_g(x) = 2\pi i^{k_g}J_{k_g-1}(x)\text{, } J^-_g(x)=0;
$$
\item
if $g$ is Maass form with $(\Delta + \lambda)g=0$ and let $r$ satisfy $\lambda_g = \left(\frac{1}{2} +ir \right) \left( \frac{1}{2} -ir\right)$, and let $\epsilon_g$ be the eigenvalue of $g$ under the reflection operator. Then
$$
J^+_g(x)= \frac{-\pi}{\sin (\pi i r) }\left(J_{2ir}(x) - J_{-2ir} (x) \right), \text{ } J^-_g(x) = \epsilon_g 4\cosh (\pi r)K_{2ir}(x);
$$
\item
if $r=0$,
$$
J^+_g(x) = -2\pi Y_0(x),\text{ } J^-_g(x)= \epsilon_g4K_0(x).
$$
\end{itemize}	
\end{lemma}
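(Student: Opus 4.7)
The plan is to derive the formula via the functional equation of the additively twisted $L$-function of $g$, combined with Mellin inversion; this is the classical Hecke-style route adapted to the twisted setting. First I would introduce
\[
L(s, g, a/c) \;=\; \sum_{n \geq 1} \frac{\lambda_g(n)\, e(an/c)}{n^s},
\]
which converges absolutely in $\mathfrak{Re}(s) > 1$ and, since $g$ is cuspidal, extends to an entire function. The key preliminary step is to establish a functional equation relating $L(s,g,a/c)$ to the analogous series for the Atkin-Lehner dual $g_{D_2}$ with the opposite-orientation twist $e(\mp \overline{aD_2}\, n/c)$. To get this, I would apply the Fricke involution $W_D$ to $g$, use the Atkin-Lehner factorization $W_D = W_{D_1} W_{D_2}$ afforded by $(D_1,D_2)=1$, and invoke the pseudo-eigenvalue relation $g|_{W_{D_2}} = \eta_g(D_2)\,g_{D_2}$. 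A Gauss-sum computation, using the splitting $\chi_D = \chi_{D_1}\chi_{D_2}$, then converts the additive character $n \mapsto e(an/c)$ into $n \mapsto e(\pm \overline{aD_2}n/c)$ and deposits the scalar $\chi_{D_1}(\overline a)\chi_{D_2}(\pm c)\eta_g(D_2)/(c\sqrt{D_2})$ visible in the statement.

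Given the functional equation, the formula follows by Mellin inversion. For $F \in C^\infty(\mathbb{R}^+)$ vanishing near $0$ and rapidly decreasing at infinity, with Mellin transform $\widetilde F(s) = \int_0^\infty F(x)\, x^{s-1}\,dx$, one writes
\[
\sum_{n \geq 1} \lambda_g(n)\, e(an/c)\, F(n) \;=\; \frac{1}{2\pi i} \int_{(\sigma)} \widetilde F(1-s)\, L(s, g, a/c)\, ds
\]
for a sufficiently large $\sigma$. Shifting the contour to $\mathfrak{Re}(s) = -\sigma$, no poles are encountered (cuspidality), and the functional equation replaces $L(s,g,a/c)$ by a ratio of gamma factors times the dual Dirichlet series. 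Exchanging summation and integration, the contribution of the $n$-th dual term becomes an integral of $F(x)$ against the inverse Mellin transform of the gamma-factor ratio evaluated at the Voronoi-type argument $4\pi c^{-1}\sqrt{nx/D_2}$. Evaluating these inverse Mellin transforms (via the standard Barnes-integral representations of Bessel functions) case by case produces exactly $J^+_g$ and $J^-_g$: in the holomorphic case one recovers $2\pi i^{k_g} J_{k_g-1}$ on the $+$ side while the $-$ side integral vanishes because the relevant functional equation carries only one sign; in the Maass case the Barnes integrals for $\Gamma(s+ir)\Gamma(s-ir)$-ratios yield the combinations $J_{2ir}-J_{-2ir}$ and $K_{2ir}$ (and, by a limiting argument at $r=0$, the kernels $Y_0$ and $K_0$).

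The main obstacle is the first step: deriving the twisted functional equation with the precise root number $\chi_{D_1}(\overline a)\chi_{D_2}(\pm c)\eta_g(D_2)$. This is a bookkeeping exercise that mixes the Atkin-Lehner structure at $D_2$, the character factorization, the Gauss-sum manipulation converting $a/c$ into $\overline{aD_2}/c$, and the reflection eigenvalue $\epsilon_g$ (which is what ultimately forces the $-$ term to vanish in the holomorphic case and to carry the $\epsilon_g$ factor in the Maass case). Once that algebraic input is in place, the analytic content (Mellin inversion, contour shift, interchange of sum and integral) is routine: $\widetilde F$ decays faster than any polynomial in vertical strips, $L(s,g,a/c)$ has polynomial growth in $|\mathfrak{Im}(s)|$ on vertical lines by a standard convexity argument, and the resulting Bessel kernels have decay and oscillation matching the support and smoothness of $F$ so that all manipulations are absolutely convergent.
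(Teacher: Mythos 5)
The paper does not prove this lemma at all: it is quoted verbatim from \cite[Theorem A.4]{kowalski2002rankin}, so there is no in-paper argument to compare against. Your sketch reproduces, in outline, exactly the proof given in that reference (twisted additive $L$-series, functional equation via the Atkin--Lehner/Fricke structure at $D_2$ with the Gauss-sum bookkeeping, then Mellin inversion and Barnes integrals for the Bessel kernels), and it is essentially sound; only two small slips are worth noting --- the Mellin inversion line should pair $\widetilde F(s)$ with $L(s,g,a/c)$ rather than $\widetilde F(1-s)$ under the standard normalization, and in the holomorphic case the vanishing of the $J^-_g$ term comes from the absence of negative-frequency Fourier coefficients, not from the reflection eigenvalue $\epsilon_g$, which is only defined for Maass forms.
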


\begin{remark}
We shall use this Lemma only when $\chi_D$ is the trivial character i.e $g$ has trivial nebentypus. In this case $W_{D_2}$ is an endomorphism on the space of cusp forms of level $D_2$, $|\eta_g(D_2)| =1$ and $g_{D_2} =g$ \cite[Proposition A.1]{kowalski2002rankin}. 
\end{remark}

We need to understand the behaviour of the integral transforms defined in Lemma \ref{voronoi summation}. We state what we need in the form of a Lemma below.
\begin{lemma}\label{Bounds for integral transforms}
Let $a\geq 1$, $W: [1,2] \to \mathbb{C}$ be a compactly supported smooth function satisfying 
$$
W^{(l)}(x) \ll_{l} a^l,
$$
for all $l \geq 0$.
Define $W^{\pm}: \mathbb{R}^+ \to \mathbb{C}$ by
\begin{equation}\label{definition of integral transform W1}
W^\pm(\xi) = \int_0^\infty{W(x)J_g^\pm(4 \pi {}\sqrt{\xi x})}dx,
\end{equation}
where $J_g$ has been defined in Lemma \ref{voronoi summation}.
Then for all $j,l \geq 0$ and $\xi >0$
\begin{equation}
\xi^l \frac{\partial^l}{\partial \xi^l} W^{\pm}(\xi) \ll_{\mu_g,j,l} a^l\max\{1,\xi^{-\theta_g}\}\frac{a^j}{\xi^{j/2}}, 
\end{equation}
where $\mu_g$ is the parameter of $g$ at infinity and $\theta_g$ is defined as
\begin{equation}
\theta_g = \begin{cases}
0, \text{ if g is holomorphic}\\
\mathfrak{Im}(r), \text{if g is a Maass form and } \lambda_g = 1/2 + ir.
\end{cases}.
\end{equation}
Unconditionally, we have $\theta_g \leq \frac{7}{64}$ \cite{kim2003functoriality}. If $g$ is not exceptional, then
\begin{equation}
\xi^l \frac{\partial^l}{\partial \xi^l} W^{\pm}(\xi) \ll_{\mu_g,j,l} a^l\frac{a^j}{\xi^{j/2}}.
\end{equation}
\end{lemma}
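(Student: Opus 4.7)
The plan is to split the analysis according to whether $\xi \lesssim 1$ (where we exploit the behavior of $J_g^\pm$ near the origin) or $\xi \gtrsim 1$ (where we exploit oscillation via non-stationary integration by parts), so that the stated bound is the maximum of the estimates in the two regimes.

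For $\xi \le 1$ and $x \in [1,2]$, the argument $y = 4\pi\sqrt{\xi x}$ is bounded, and from the power series $J_\nu(y) = (y/2)^\nu\sum_{k \ge 0} (-y^2/4)^k/(k!\,\Gamma(\nu+k+1))$ we read off $|J_{\pm 2ir}(y)| \ll_{\mu_g} y^{\mp 2\mathfrak{Im}(r)}$, so $|J_g^+(y)| \ll_{\mu_g} y^{-2\theta_g}$; analogous estimates hold for $Y_0$ (with a harmless $\log y$ absorbed into $\mu_g$-dependence) and for the $K$-Bessel pieces via their series or integral representations. After substituting $y = \xi x$,
$$
W^\pm(\xi) = \xi^{-1}\int_0^\infty W(y/\xi)\, J_g^\pm(4\pi\sqrt{y})\, dy,
$$
iterated application of $\partial_\xi$ produces an integrand of the form $\xi^{-1-l} P_l(y/\xi)\, J_g^\pm(4\pi\sqrt{y})$ with $P_l$ a polynomial in $W^{(k)}(y/\xi)$ for $k \le l$, so the hypothesis $|W^{(k)}| \ll a^k$ together with $y/\xi \in [1,2]$ yields $\xi^l |\partial_\xi^l W^\pm(\xi)| \ll_{\mu_g,l} a^l\, \xi^{-\theta_g}$. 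Since $a \ge 1$ and $\xi \le 1$ force $a^j/\xi^{j/2} \ge 1$, the claim follows in this regime.

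For $\xi \ge 1$, I would use the standard asymptotic $J_\nu(y) = y^{-1/2}\bigl(e^{iy} F_\nu^+(1/y) + e^{-iy} F_\nu^-(1/y)\bigr)$ for $y \to \infty$ with $F_\nu^\pm$ smooth near $0$ and all derivatives bounded (analogously for $Y_0$; the $K$-Bessel contributions decay exponentially in $\sqrt{\xi}$ and are negligible). This reduces $W^\pm(\xi)$ to a sum of oscillatory integrals of the form
$$
\xi^{-1/4}\int_1^2 W(x)\, x^{-1/4}\, G_\pm(\xi x)\, e^{\pm 4\pi i\sqrt{\xi x}}\, dx,
$$
where $G_\pm$ is smooth with $(\xi x)^k|G_\pm^{(k)}(\xi x)|$ bounded. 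The phase derivative $\pm 2\pi\sqrt{\xi/x}$ has modulus $\asymp \sqrt{\xi}$ on $x\in[1,2]$, so repeated application of $L h = \partial_x\bigl(h/(\pm 2\pi i\sqrt{\xi/x})\bigr)$ gains a factor $O(a/\sqrt{\xi})$ per pass: derivatives of $W$ cost $a$, while derivatives of $x^{-1/4}G_\pm(\xi x)$ and of $1/\sqrt{\xi/x}$ cost $O(1)$. After $j$ passes, $|W^\pm(\xi)| \ll_{\mu_g,j} \xi^{-1/4}(a/\sqrt{\xi})^j$, which is at least as strong as claimed since $\max\{1,\xi^{-\theta_g}\}=1$ in this range. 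For the derivatives, $\partial_\xi e^{\pm 4\pi i\sqrt{\xi x}}$ contributes $\pm 2\pi i\sqrt{x/\xi}$, so $\xi\partial_\xi$ on the oscillatory factor is bounded; combining $l$ $\xi$-derivatives with $j$ integrations by parts gives $\xi^l|\partial_\xi^l W^\pm(\xi)| \ll_{\mu_g,j,l} a^l\cdot a^j/\xi^{j/2}$.

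The main obstacle is the bookkeeping required to keep the $a$-dependence (arising solely from derivatives of $W$, giving the $a^{l+j}$ factor) cleanly separated from the $\xi$-dependence (near-origin $\xi^{-\theta_g}$ singularity together with at-infinity $\xi^{-j/2}$ decay). The non-exceptional case drops out automatically since then $\theta_g = 0$ and $J_{\pm 2ir}(y)$ is uniformly bounded near zero. Matching the two regimes across $\xi \asymp 1$ is straightforward since both estimates yield $O(1)$ there.
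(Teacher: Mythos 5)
Your route is genuinely different from the paper's. The paper substitutes $u=4\pi\sqrt{\xi x}$ so that the Bessel factor has a fixed argument $u$, differentiates under the integral (each $\xi$-derivative then lands on $W(u^2/(16\pi^2\xi))$ and costs a factor $a$), and quotes the Bessel-transform estimate of Kowalski--Michel--Vanderkam (their Lemma 6.1, applied with $M=4\pi\sqrt{\xi}$; the condition $\mathfrak{Re}\,\nu\ge -2\theta_g$ is what produces the factor $\max\{1,\xi^{-\theta_g}\}$). You instead reprove that estimate from scratch by splitting at $\xi\asymp 1$: power-series behaviour of $J_{\pm 2ir}$, $Y_0$, $K_{2ir}$ near the origin for $\xi\le 1$, and the oscillatory asymptotics plus repeated integration by parts for $\xi\ge 1$. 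That self-contained two-regime analysis is a legitimate alternative, and your small-$\xi$ treatment (substituting $y=\xi x$ so that $\xi\partial_\xi$ falls on $W$ and costs $a$) is correct, as is the undifferentiated large-$\xi$ bound $\xi^{-1/4}(a/\sqrt{\xi})^{j}$.

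There is, however, a concrete error in the large-$\xi$ derivative bookkeeping. You assert that since $\partial_\xi e^{\pm 4\pi i\sqrt{\xi x}}$ contributes $\pm 2\pi i\sqrt{x/\xi}$, ``$\xi\partial_\xi$ on the oscillatory factor is bounded''. It is not: $\xi\cdot 2\pi\sqrt{x/\xi}=2\pi\sqrt{\xi x}\asymp\sqrt{\xi}$, which is large precisely in the regime you are treating. As written, your accounting would produce no factor $a^l$ at all (only the $a^j$ from the $j$ integrations by parts), so the final inequality you state, with $a^l$, is not what your argument delivers; the reasoning does not close. The repair is standard and is exactly where the $a^l$ in the lemma comes from: each factor $\sqrt{\xi x}$ created by a $\xi$-derivative must be absorbed by one additional integration by parts, costing $a/\sqrt{\xi}$ and hence netting a factor $a$ per $\xi$-derivative. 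Cleaner still (and in effect what the paper's substitution accomplishes globally) is to change variables $u=\xi x$ or $u=4\pi\sqrt{\xi x}$ \emph{before} differentiating, so that every $\xi$-derivative lands on $W$ and costs $O(a)$, and only then run the integration-by-parts argument in $u$. With that correction your two-regime proof goes through and yields the stated bounds, including the non-exceptional case $\theta_g=0$.
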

\begin{proof}
Making the substitution $u = 2\pi \sqrt{\xi x}$ in \eqref{definition of integral transform W1}, we get
\begin{equation}
W^\pm(\xi) = \frac{1}{8 \pi^2 \sqrt{\xi}}\int_0^\infty{\frac{u}{\sqrt{\xi} }W\left(\frac{u^2}{(4 \pi)^2 \xi} \right)J_g^\pm(u)}du.
\end{equation}
Differentiating within the integral, we get
\begin{equation}
\left(\frac{\partial}{\partial \xi}\right)^l W^\pm(\xi) = \sum_{0 \leq k \leq l}{\left(\frac{\partial}{\partial \xi}\right)^k \left(\frac{1}{8 \pi^2 \sqrt{\xi} } \right)\int_0^\infty{ \left(\frac{\partial}{\partial \xi}\right)^{l-k} \left(\frac{u}{\sqrt{\xi} }W\left(\frac{u^2}{(4 \pi)^2 \xi} \right) \right)J_g^\pm(u)}du}.
\end{equation}
We apply Lemma 6.1 \cite{kowalski2002rankin} (see remark below)  with $$h(u) = \left(\frac{\xi }{a}\right)^{l-k}\left(\frac{\partial}{\partial \xi}\right)^{l-k} \left(\frac{u}{\sqrt{\xi} }W\left(\frac{u^2}{(4 \pi)^2 \xi} \right) \right)$$ and $M = 4\pi \sqrt{\xi}$ to bound the integral. Using our definition of $J_g^\pm$ (see Lemma \ref{voronoi summation}), we see that the real part of the Bessel function satisfies $\mathfrak{Re}\nu \geq -2\theta_g$. Thus if $\mathfrak{Re}\nu \leq 0$, then $$M^{\mathfrak{Re}\nu} \ll \max\{1,\xi^{-\theta} \}.$$
\end{proof}

\begin{remark}
We would like to point out a typo in Lemma 6.1 of \cite{kowalski2002rankin}. While the estimate stated in Lemma 6.1 \cite{kowalski2002rankin} is
\begin{equation}
\int_{0}^\infty{J_\nu(x)h(x)dx} \ll_{v,j} \frac{a^j(1+ |\log M |)}{M^{j-1}}\frac{M^{\mathfrak{Re \nu} + j +1}}{(1+ M )^{\mathfrak{Re \nu} + j +1/2}},
\end{equation}
the estimate that has been shown is
\begin{equation}
\int_{0}^\infty{J_\nu(x)h(x)dx} \ll_{v,j} \frac{a^j(1+ |\log M |)}{M^{j-1}}\frac{M^{\mathfrak{Re \nu} + j}}{(1+ M )^{\mathfrak{Re \nu} + j +1/2}}.
\end{equation}
Notice the absence of $+1$ in the exponent $M^{\mathfrak{Re \nu} + j +1}$. Furthermore, the same inequality holds with the $J$-bessel function replaced by $K$-Bessel function or $Y$-Bessel function, without the $\frac{M^{\mathfrak{Re} \nu}}{(1+M)^{\mathfrak{Re} \nu}}$ term.

The result above has been stated for $W$ having compact support contained in the interval $[1,2]$. But it holds without any change for the support contained in any absolutely bounded interval, bounded away from zero. For example, $\text{supp}(W) \subset [1/1000,1000]$ is sufficient for the purpose of this paper.
\end{remark}

\begin{definition}[$H_\theta$]\label{H theta}
We say that Hecke-cusp form $f$ of level $q$ and nebentypus $\chi$ satisfies the Hypothesis $H_\theta$, if for all $n \geq 1$ and any $\epsilon >0$
$$
| \lambda_f(n)| \ll_{\epsilon} n^{\theta+\epsilon},
$$
where $\lambda_f(p)$ are the local parameters of $\pi_f$ at $p$ and $\lambda_f(n)$ satisfy Hecke relations.
\end{definition}
The Ramanujan conjecture asserts that we can take $\theta =0$, for all $q$ and $\chi$. We can unconditionally take $\theta=7/64$ for Maass forms (Kim-Sarnak \cite{kim2003functoriality}) and $\theta =0$ for holomorphic forms. Rankin-Selberg theory \cite[2.28]{harcos2006subconvexity}, implies the Ramanujan conjecture on average unconditionally.
\begin{lemma}
Let $g$ cuspidal automorphic form of level $p$ and nebentypus $\chi_f$. Then for all $N \geq 1$ and $\epsilon >0$
\begin{equation}\label{rankin selberg bound}
\sum_{1 \leq n \leq N}{|\lambda_f(n)|^2} \ll_{\epsilon}(p(1+|\mu_f|) N)^\epsilon N, 
\end{equation}
where $\mu_f$ is the local parameter of $f$ at infinity.
\end{lemma}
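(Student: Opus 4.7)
The plan is to exploit the Dirichlet series identity
$$
\sum_{n\geq 1}\frac{|\lambda_f(n)|^2}{n^s}\;=\;\frac{L(s,f\times\bar f)}{\zeta^{(p)}(2s)}\qquad (\mathfrak{Re}\,s>1),
$$
where $\zeta^{(p)}$ denotes the Riemann $\zeta$-function with the Euler factor at $p$ removed. This identity holds up to finitely many Euler factors at the ramified prime $p$, all of which are bounded away from $0$ and $\infty$ on $\mathfrak{Re}\,s\geq 1/2$. Since $f$ is cuspidal, $L(s,f\times\bar f)$ is entire except for a simple pole at $s=1$, and standard bounds (Hoffstein-Lockhart with the Goldfeld-Hoffstein-Lieman appendix, or even just partial-Euler-product estimates on the edge of absolute convergence) give $\mathrm{Res}_{s=1}L(s,f\times\bar f)\ll_\epsilon (p(1+|\mu_f|))^\epsilon$.

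The first step is to smooth the partial sum: choose $V\in C_c^\infty(\mathbb{R}_{>0})$ with $V\equiv 1$ on $[1,N]$, $V$ vanishing outside $[1/2,2N]$, and $x^j V^{(j)}(x)\ll_j 1$. By positivity of $|\lambda_f(n)|^2$,
$$
\sum_{1\leq n\leq N}|\lambda_f(n)|^2 \;\leq\;\sum_{n\geq 1}|\lambda_f(n)|^2\,V(n),
$$
and Mellin inversion gives
$$
\sum_{n\geq 1}|\lambda_f(n)|^2\,V(n)\;=\;\frac{1}{2\pi i}\int_{(2)}\frac{L(s,f\times\bar f)}{\zeta^{(p)}(2s)}\,\tilde V(s)\,ds,
$$
where $\tilde V$ is the Mellin transform of $V$ and satisfies $\tilde V(s)\ll_A N^{\mathfrak{Re}\,s}(1+|\mathfrak{Im}\,s|)^{-A}$ for any $A\geq 0$ in any bounded vertical strip.

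The second step is to shift the contour to $\mathfrak{Re}\,s=\epsilon$, picking up the simple pole of the integrand at $s=1$. The residue contributes
$$
\frac{\mathrm{Res}_{s=1}L(s,f\times\bar f)}{\zeta^{(p)}(2)}\,\tilde V(1)\;\ll_\epsilon\;(p(1+|\mu_f|))^\epsilon\, N,
$$
which matches the claimed bound. The remaining integral on the line $\mathfrak{Re}\,s=\epsilon$ is controlled by the convexity bound for the degree-$4$ $L$-function $L(s,f\times\bar f)$, whose analytic conductor is $\ll (p(1+|\mu_f|))^{O(1)}(1+|t|)^{O(1)}$; together with the super-polynomial decay of $\tilde V(\epsilon+it)$ in $|t|$ and an overall factor $N^\epsilon$, the shifted integral is absorbed into the main term after adjusting $\epsilon$.

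No single step here is a true obstacle — the whole argument is classical (see Iwaniec-Kowalski, Section 5.12) — the only care required is to track the polynomial dependence on $p$ and $|\mu_f|$ through the residue bound and the convexity bound, both of which follow from standard features of $GL(2)\times GL(2)$ Rankin-Selberg theory recalled in the paper.
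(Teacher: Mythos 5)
Your overall strategy (unfold into the Rankin--Selberg $L$-function, smooth, Mellin, contour shift, residue) is the classical route, and indeed the paper itself offers no proof at all here -- it simply quotes this bound from Rankin--Selberg theory via \cite[(2.28)]{harcos2006subconvexity}. However, as written your argument has two genuine problems. First, the contour shift to $\mathfrak{Re}\,s=\epsilon$ is not legitimate: the integrand contains the factor $1/\zeta^{(p)}(2s)$, which has poles at $s=\rho/2$ for every nontrivial zero $\rho$ of $\zeta$ (in particular a dense set on $\mathfrak{Re}\,s=1/4$), and these are not known to be cancelled by zeros of $L(s,f\times\bar f)$; moreover there is no usable lower bound for $|\zeta^{(p)}(2s)|$ on the line $\mathfrak{Re}\,s=\epsilon$. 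This is exactly why the paper's own P\'olya--Vinogradov-type lemma shifts only to $\mathfrak{Re}\,s=\tfrac12+\epsilon$, where $\zeta^{(p)}(2s)=\zeta^{(p)}(1+2\epsilon+2it)$ is bounded below by $\gg\epsilon$.

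Second, even after moving the contour only to $\mathfrak{Re}\,s=\tfrac12+\epsilon$, your final claim that the shifted integral ``is absorbed into the main term after adjusting $\epsilon$'' fails. The convexity bound there gives roughly $N^{1/2}\,p^{1/2+\epsilon}(1+|\mu_f|)^{1+\epsilon}$ (the analytic conductor of the degree-$4$ $L$-function is $\asymp p^2(1+|\mu_f|+|t|)^4$), which is a fixed positive power of $p(1+|\mu_f|)$, not a factor $(p(1+|\mu_f|)N)^\epsilon$; it is dominated by $N$ only when $N\gg p(1+|\mu_f|)^2$ up to $\epsilon$-powers. The lemma, however, is asserted for \emph{all} $N\ge 1$ and is applied in the paper in short ranges (e.g.\ in Lemma \ref{Bounds for convolution of fourier coefficients on AP} with arbitrary $X,Y\ge1$), so the small and intermediate ranges of $N$ need a separate argument -- for instance, avoid crossing the pole altogether and bound $\sum_n|\lambda_f(n)|^2n^{-\sigma}$ at $\sigma=1+1/\log(p(1+|\mu_f|)N)$ using a conductor-$\epsilon$ bound for $L(s,\mathrm{sym}^2 f)$ just to the right of $s=1$, or simply quote the uniform statement \cite[(2.28)]{harcos2006subconvexity} as the paper does. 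A minor further point: Hoffstein--Lockhart (with the Goldfeld--Hoffstein--Lieman appendix) is the \emph{lower} bound for $L(1,\mathrm{sym}^2 f)$; what your residue estimate needs is the upper bound $L(1,\mathrm{sym}^2 f)\ll_\epsilon (p(1+|\mu_f|))^\epsilon$, which is standard but a different statement, and it is not obtained by ``partial Euler products on the edge of absolute convergence'' since $s=1$ lies on, not inside, the region of absolute convergence.
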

We prove a Polya-Vinogradov type inequality for smooth partial sums of $\lambda_f(n) \lambda_g(n)$ below:
\begin{lemma}\label{Polya-Vinogradov type inequality}
Let $f$ and $g$ be cuspidal newforms of levels $p$ and $q$ respectively ,$W: \mathbb{R}^+ \to \mathbb{C}$ be a smooth compactly supported function and $Q(f \times g)$ be the arithmetic conductor of $L(f \times g,s)$ \eqref{arithmetic conductor}. If $f \neq \overline{g}$, so that $L(f \times g,s)$ has no poles, then
$$
\sum_{n}{\lambda_f(n)\lambda_g(n)W\left(\frac{n}{N} \right)} \ll_{\epsilon} (pqN)^\epsilon \sqrt{N \sqrt{Q(f \times g)}}.
$$

\end{lemma}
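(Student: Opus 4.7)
The plan is to prove this by combining Mellin inversion with the convexity bound for the Rankin-Selberg $L$-function.

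First, write $W$ in terms of its Mellin transform $\widetilde{W}(s) = \int_0^\infty W(x) x^{s-1}\,dx$, which, since $W$ is smooth and compactly supported away from $0$, decays faster than any polynomial in vertical strips: for every $A>0$ and any fixed $\sigma$,
$$
\widetilde{W}(\sigma + it) \ll_{A,\sigma,W} (1+|t|)^{-A}.
$$
By Mellin inversion,
$$
\sum_{n\geq 1} \lambda_f(n)\lambda_g(n)\, W\!\left(\frac{n}{N}\right) = \frac{1}{2\pi i}\int_{(\sigma)} \widetilde{W}(s)\, N^s \sum_{n\geq 1} \frac{\lambda_f(n)\lambda_g(n)}{n^s}\,ds
$$
for $\sigma$ chosen large enough (say $\sigma = 2$) that the Dirichlet series converges absolutely. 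Write $\sum_n \lambda_f(n)\lambda_g(n) n^{-s} = L(f\times g, s)/L(\chi_f\chi_g, 2s)$, so that this equals $L(f\times g,s)$ up to a factor bounded on $\operatorname{Re}(s) = 1/2$ by standard zero-free region bounds for the Dirichlet $L$-function (which contributes only $(pq)^\epsilon$ via Littlewood).

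Next, shift the contour to $\operatorname{Re}(s) = 1/2$. Since $f \neq \overline{g}$, the Rankin-Selberg $L$-function $L(f\times g, s)$ is entire, so no residues are encountered; moreover, the rapid decay of $\widetilde{W}(s)$ in $t$ justifies moving the contour and controls the ends. On the critical line, the convexity bound gives
$$
\bigl|L(f\times g, \tfrac{1}{2}+it)\bigr| \ll_{\epsilon} \bigl(Q(f\times g)(1+|t|)^{C}\bigr)^{1/4+\epsilon}
$$
for an absolute constant $C$, where $Q(f\times g)$ is as in \eqref{arithmetic conductor}. Plugging in gives
$$
\sum_{n} \lambda_f(n)\lambda_g(n)\, W\!\left(\frac{n}{N}\right) \ll_{\epsilon} N^{1/2}\, Q(f\times g)^{1/4+\epsilon}\int_{-\infty}^{\infty} \bigl|\widetilde{W}(\tfrac{1}{2}+it)\bigr|\,(1+|t|)^{C(1/4+\epsilon)}\,dt.
$$
The integral converges to an $W$-dependent constant by the super-polynomial decay of $\widetilde{W}$, which yields the claimed bound $(pqN)^\epsilon \sqrt{N\sqrt{Q(f\times g)}}$ after absorbing the implicit $t$-factors into the $(pqN)^\epsilon$ and into the implied constant.

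There is essentially no obstacle here: the only nontrivial input is the convexity bound for $L(f\times g, s)$, which is standard from the functional equation plus Phragmén–Lindelöf once one knows polynomial growth in vertical strips (provided by bounds for the archimedean gamma factors together with absolute convergence to the right of $\operatorname{Re}(s)=1$). The hypothesis $f\neq \overline{g}$ is used precisely to avoid poles when shifting the contour; had there been a pole, one would have picked up a main term of size $N\cdot\operatorname{res}_{s=1} L(f\times g, s)$, which is larger than the target bound.
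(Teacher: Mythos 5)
Your proposal is correct and follows essentially the same route as the paper: Mellin inversion with $F(s)=L(f\times g,s)/L(\chi_f\chi_g,2s)$, a contour shift justified by the rapid decay of $\widetilde{W}$, and the Phragm\'en--Lindel\"of convexity bound for $L(f\times g,s)$. The only difference is that the paper shifts to $\mathfrak{Re}(s)=\frac{1}{2}+\epsilon$ rather than to the critical line, so the denominator is controlled by the elementary Euler-product lower bound $L(\chi,1+2\epsilon+it)\gg\epsilon$, whereas your shift to $\mathfrak{Re}(s)=\frac{1}{2}$ needs a lower bound for $L(\chi_f\chi_g,1+2it)$ from zero-free-region/Littlewood-type input, which is unnecessary and, for a real character $\chi_f\chi_g$, conditional or ineffective near $t=0$ (in the paper's setting $\chi_f\chi_g$ is trivial, so your version still goes through with classical bounds on $1/\zeta(1+2it)$).
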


\begin{proof}

Let $F(s) = \sum_{n =1}^\infty{\frac{\lambda_f(n)\lambda_g(n)}{n^s}} =\frac{L(f \times g,s)}{L(\chi_f\chi_g,2s)} $. 
Using Mellin inversion,
\begin{align}\label{mellin inversion}
\sum_{n}{\lambda_f(n)\lambda_g(n)W\left(\frac{n}{N} \right)} = \frac{1}{2\pi i}\int_{(2)}{N^sF(s) \widetilde{W}(s)ds},
\end{align}
where 
\begin{equation}
\widetilde{W}(s) = \int_0^\infty{W(x)x^{s-1} dx}
\end{equation}
is the Mellin transform. Using integration by parts repeatedly, we can show that for all $A \geq 0$,
$$
\widetilde{W}(s) \ll_A \min\{ 1, |s|^{-A} \}.
$$
For $\mathfrak{Re}(s) > \frac{1}{2}$, $L(\chi_f\chi_g,2s)$ doesn't vanish. Hence, $F(s)$ is analytic in the same region. Let $\chi$ be any Dirichlet character. The following lower bound for $L(\chi,s)$ to the right of critical strip is elementary and well known.
\begin{align*}
L(\chi,1+\epsilon +it) &= \prod_{l \text{ prime} }{\left(1- \frac{\chi(l)}{l^{1+\epsilon +it}}\right)^{-1}}\\
&\geq \prod_{l \text{ prime} }{\left(1 + \frac{1}{l^{1+\epsilon}}\right)^{-1}}\\
&\geq \frac{c}{\zeta(1+\epsilon)} \gg \epsilon,
\end{align*}
where the constant $c$ is independent of $\chi$, $t$, and $\epsilon$. Shifting the contour to $\mathfrak{Re}(s) =1/2 + \epsilon$ in \eqref{mellin inversion}, we have
\begin{align}
\sum_{n}{\lambda_f(n)\lambda_g(n)W\left(\frac{n}{N} \right)} &= \frac{1}{2\pi i}\int_{(1/2+ \epsilon)}{N^sF(s) \widetilde{W}(s)ds}\\
&= \frac{1}{2\pi i}\int_{\substack{(1/2+ \epsilon) \\ |\mathfrak{Im}(s)| \leq (pqN)^\epsilon } }{N^s\frac{L(f \times g,s)}{L(\chi_f\chi_g,2s)} \widetilde{W}(s)ds} + O((pqN)^{-2018})\\
& \ll_{\epsilon} (pqN)^\epsilon N^{1/2} {Q(f \times g)}^{1/4}.
\end{align}

In the last line, we have used the Phragmen-Lindelof convexity bound for $L(f \times g,s)$ on $\mathfrak{Re}(s) =\frac{1}{2}+\epsilon$ to bound $L(f \times g,s)$ from above.
\end{proof}
As we shall make repeated use of the bound proved below, we state it in the form of a lemma.
\begin{lemma}\label{Bounds for convolution of fourier coefficients on AP}
Let $a,b,c \in \mathbb{N}$ be such that $(ab,c) =1$ and $X,Y \geq 1$. If $0 \leq \alpha, \beta \leq \frac{1}{2}$, then
\begin{equation}\label{bounds on fourier coefficient alpha,beta}
S(\alpha,\beta)=\sum_{\substack{n \leq X\\m \leq Y\\am \equiv bn(c)}}{\frac{|\lambda_f(n)|}{n^\alpha}\frac{|\lambda_g(m)|}{m^\beta} } \ll_{\epsilon} (pqXY)^{\epsilon}\frac{X^{1-\alpha}Y^{1-\beta}}{c}\left(1 + \sqrt{\frac{c}{Y}}+\sqrt{\frac{c}{X}} + \frac{c}{\sqrt{XY}}\right).
\end{equation}
In particular,
\begin{equation}\label{Equation for bounds of fourier coefficinet in AP}
S := S(0,0) =\sum_{\substack{n \leq X\\m \leq Y\\am \equiv bn(c)}}{|\lambda_f(n)||\lambda_g(m)|} \ll_{\epsilon} (pqXY)^{\epsilon}\frac{XY}{c}\left(1 + \sqrt{\frac{c}{Y}}+\sqrt{\frac{c}{X}} + \frac{c}{\sqrt{XY}}\right).
\end{equation}
\end{lemma}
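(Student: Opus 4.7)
The plan is to prove the bound by a direct Cauchy--Schwarz argument combined with the Rankin--Selberg bound \eqref{rankin selberg bound}. Since the statement with general $\alpha, \beta$ follows from the same argument as $S(0,0)$ with trivial weight tracking, I will outline both simultaneously.

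First, I would apply Cauchy--Schwarz to separate the contributions of $|\lambda_f(n)|$ and $|\lambda_g(m)|$:
\begin{equation*}
S(\alpha,\beta) \leq \left(\sum_{\substack{n \leq X,\ m \leq Y\\ am \equiv bn\,(c)}} \frac{|\lambda_f(n)|^2}{n^{2\alpha}}\right)^{1/2}\left(\sum_{\substack{n \leq X,\ m \leq Y\\ am \equiv bn\,(c)}} \frac{|\lambda_g(m)|^2}{m^{2\beta}}\right)^{1/2}.
\end{equation*}
Since $(ab,c) = 1$, the congruence $am \equiv bn \pmod c$ is equivalent to $m \equiv \overline{a}b n \pmod c$, so for each fixed $n$ the number of admissible $m \leq Y$ is at most $Y/c + 1$; symmetrically, for each fixed $m$ there are at most $X/c + 1$ admissible $n$. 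Thus the first factor in the Cauchy--Schwarz is bounded by $(Y/c+1) \sum_{n \leq X} |\lambda_f(n)|^2/n^{2\alpha}$ and the second factor by $(X/c+1) \sum_{m \leq Y} |\lambda_g(m)|^2/m^{2\beta}$.

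Next, I would bound the single-variable sums via partial summation from the Rankin--Selberg bound \eqref{rankin selberg bound}: writing $A_f(t) = \sum_{n \leq t} |\lambda_f(n)|^2 \ll_\epsilon (pt)^\epsilon t$ and using Abel summation together with $0 \leq \alpha \leq 1/2$ yields
\begin{equation*}
\sum_{n \leq X}\frac{|\lambda_f(n)|^2}{n^{2\alpha}} \ll_\epsilon (pX)^\epsilon X^{1-2\alpha},
\end{equation*}
where the case $\alpha=1/2$ produces only an extra logarithm that is absorbed in $X^\epsilon$; the analogous bound holds for $g$. Substituting these in gives
\begin{equation*}
S(\alpha,\beta) \ll_\epsilon (pqXY)^\epsilon\, X^{1/2-\alpha} Y^{1/2-\beta}\sqrt{(X/c+1)(Y/c+1)}.
\end{equation*}

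Finally, I would expand the square root using $\sqrt{u_1+u_2+u_3+u_4} \leq \sqrt{u_1}+\sqrt{u_2}+\sqrt{u_3}+\sqrt{u_4}$ applied to $(X/c+1)(Y/c+1) = XY/c^2 + X/c + Y/c + 1$. After pairing each of the four resulting terms with $X^{1/2-\alpha}Y^{1/2-\beta}$ and factoring out $X^{1-\alpha}Y^{1-\beta}/c$, the four contributions become exactly $1$, $\sqrt{c/Y}$, $\sqrt{c/X}$, and $c/\sqrt{XY}$, which yields \eqref{bounds on fourier coefficient alpha,beta}. The specialization $\alpha=\beta=0$ gives \eqref{Equation for bounds of fourier coefficinet in AP}. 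No step here is genuinely hard; the only point requiring a little care is the partial summation at $\alpha = 1/2$ (resp.\ $\beta=1/2$), which I would handle uniformly by the simple observation that $X^{1-2\alpha}\geq 1$ for $X \geq 1$ and $\alpha \leq 1/2$, so that the logarithmic factor is cleanly absorbed into $(pX)^\epsilon$.
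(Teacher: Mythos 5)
Your proof is correct and follows essentially the same route as the paper: Cauchy--Schwarz to decouple $|\lambda_f|$ and $|\lambda_g|$, counting solutions of $am\equiv bn \pmod c$ (at most $Y/c+1$ values of $m$ per $n$ and $X/c+1$ values of $n$ per $m$, using $(ab,c)=1$), and then the Rankin--Selberg bound \eqref{rankin selberg bound}. The only cosmetic difference is that the paper first proves the case $\alpha=\beta=0$ and deduces the general case by dyadic subdivision, whereas you carry the weights $n^{-\alpha}, m^{-\beta}$ through directly via partial summation; both handle the endpoint $\alpha=1/2$ (or $\beta=1/2$) by absorbing a logarithm into $(pqXY)^{\epsilon}$.
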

\begin{proof}
For $\alpha = \beta =0$, using Cauchy-Schwarz inequality, we have
\begin{align}
S = S(0,0) &\ll \left(\sum_{\substack{n \leq X\\m \leq Y\\am \equiv bn(c)}}{|\lambda_f(n)|^2 }\right)^{1/2}\left(\sum_{\substack{n \leq X\\m \leq Y\\am \equiv bn(c)}}{|\lambda_g(m)|^2}\right)^{1/2}.
\end{align} 
Now applying the Rankin-Selberg bound \eqref{rankin selberg bound}, we get
\begin{equation}
S \ll (pqXY)^\epsilon \left(X\left(\frac{Y}{c}+1\right) \right)^{1/2}\left(Y\left(\frac{X}{c}+1 \right) \right)^{1/2}.
\end{equation}
For general $\alpha \leq 1/2$ and $\beta \leq 1/2$, the bound \eqref{bounds on fourier coefficient alpha,beta} follows from \eqref{Equation for bounds of fourier coefficinet in AP}, after performing a dyadic sub-division of the sum over $m,n$.

\end{proof}
\begin{remark}
If both $X,Y$ are greater than $c$, then the first term on right hand side of \eqref{Equation for bounds of fourier coefficinet in AP} dominates the others. This bound is optimal upto $(XY)^\epsilon$.
\end{remark}

\subsection{Approximate functional equation}

We refer the reader to \cite[Section 3]{harcos2006subconvexity} for proofs. For $s$ on the critical line, we set
\begin{equation}
A = \prod_{i=1}^4{|s + \mu_{f \times g,i}|^{1/2} },
\end{equation}
where the local parameters $\mu_{f \times g,i}$ of $\pi_f \times \pi_g$ can be computed in terms of the local parameters of $\pi_f$ and $\pi_g$ respectively. We can check that
$$
A \leq (|s| + \mu_f + \mu_g)^2.
$$
We have essentially isolated the spectral part of the analytic conductor as $A$. Let us define
\begin{equation}\label{S f times g}
S_{f \times g}(N)= \sum_{n}{\lambda_f(n)\lambda_g(n)U\left(\frac{n}{N}\right)},
\end{equation}
where $U: \mathbb{R} \to \mathbb{R}$ is a smooth function with compact support contained in $[1/2,5/2]$. 
By standard techniques, for $\mathfrak{Re}s =1/2$ and any $K \geq 1$ we can show
\begin{equation}
L(f \times g,s) \ll_A \log^2(pqA +1)\sum_{N}{\frac{|S_{f \times g}(N)|}{\sqrt{N}}\left(1+ \frac{N}{AQ(f \times g)}\right)^{-K}},
\end{equation}
where $N$ runs over reals of the form $N=2^\nu$, $\nu \geq -1$.
Thus to prove Theorem \ref{main theorem}, it is enough to prove the following statement.
\begin{proposition}\label{main proposition}
For $\delta= \frac{1}{64}$, any $0 \leq \epsilon \leq 10^{-6}$, and any $1 \leq N \leq (Apq)^{1+\epsilon}$,
\begin{equation}\label{Aim}
S_{f \times g}(N) \ll_{A,\epsilon} \frac{\sqrt{N}(pq)^{1/2+\epsilon}}{\max \{p,q \}^{\delta}},
\end{equation}
where the dependence on $A$ is polynomial.

\end{proposition}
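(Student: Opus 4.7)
The plan is to prove Proposition \ref{main proposition} by the circle method, separating the oscillations of $\lambda_f$ and $\lambda_g$ and then invoking the shifted-convolution bounds of Theorem \ref{SCP in ab coprime to p}. The trivial estimate $S_{f\times g}(N)\ll N^{1+\epsilon}$ follows from Cauchy--Schwarz and Rankin--Selberg \eqref{rankin selberg bound}, and the target is to save a factor $\max\{p,q\}^{1/64}$ over the convexity contribution $\sqrt{N}(pq)^{1/2}$ in the generic range $N\asymp pq$.

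First I would introduce an auxiliary smooth weight $U_2$ compatible with $U$ and write
$$S_{f\times g}(N) \;=\; \sum_{n,m}\lambda_f(n)\lambda_g(m)\,\delta(n-m)\,U(n/N)\,U_2(m/N),$$
then expand $\delta(n-m)$ by a delta symbol (Duke--Friedlander--Iwaniec or Jutila's variant) supported on moduli $c\asymp Q$, with $Q$ to be optimized later. This produces an expression of the shape
$$\frac{1}{Q}\sum_{c\asymp Q}\frac{1}{c}\sum_{a\bmod c}^{*}\int h(\xi)\,T_f(a/c,\xi)\,\overline{T_g(a/c,\xi)}\,d\xi,$$
where $T_f(a/c,\xi)=\sum_n\lambda_f(n)\,e(an/c)\,e(n\xi/(cQ))\,U(n/N)$ and similarly for $T_g$.

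Next I apply Voronoi summation (Lemma \ref{voronoi summation}) to $T_f$ (at level $p$) and $T_g$ (at level $q$). Because $p,q$ are distinct primes, the factorization $c=c_1c_2$ required by the lemma splits into only four cases according to whether $p\mid c$ and whether $q\mid c$, and the coprimality $(c_1,c_2)=1$ is automatic. The dual sums have effective lengths $\asymp c^2 p_*/N$ and $\asymp c^2 q_*/N$, with $p_*=p/(c,p)$ and $q_*=q/(c,q)$, and the integral transforms are controlled by Lemma \ref{Bounds for integral transforms}. Evaluating the complete character sum over $a\bmod c$ with $(a,c)=1$ forces a congruence of the form $q_*\,n'\equiv \pm p_*\,m'\pmod{c}$ between the dual variables $n'$ (attached to $f$) and $m'$ (attached to $g$). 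This is precisely the input of the shifted convolution sum of Theorem \ref{SCP in ab coprime to p}: in the hardest regime $(c,pq)=1$ one has $p_*=p$ and $q_*=q$, so any non-trivial solution produces a shift $p_* n'-q_* m'$ that is automatically a multiple of both $p$ and $q$, and the sharper bound \eqref{bound for SCP shift multiple of level} applies in place of the generic \eqref{bound for SCP non-zero shift} --- a gain of $\sqrt{p}$ (or $\sqrt{q}$).

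The final step is to collect the pieces. The zero-shift contribution is bounded via \eqref{bound for SCP rankin-selberg} together with Lemma \ref{Bounds for convolution of fourier coefficients on AP}, while the non-zero shift contribution is bounded via \eqref{bound for SCP shift multiple of level}. Summing over $c\asymp Q$ and optimizing $Q$ against the delta-method error should then produce the claimed saving. The hard part will be the optimization of $Q$ combined with the asymmetric bookkeeping of the four cases $(c,p)\in\{1,p\}$, $(c,q)\in\{1,q\}$: when $p\mid c$, the $T_f$-dual sum shortens while the $T_g$-dual sum stays long, so the balance of errors is different in each case and must be treated individually. The exponent $1/64$ is not an artifact of the optimization; it is precisely the level-conductor gain of \eqref{bound for SCP shift multiple of level} over \eqref{bound for SCP non-zero shift}, diluted by the circle-method overhead.
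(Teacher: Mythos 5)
There is a genuine gap, and it sits at the heart of your sketch: the claim that, after double Voronoi with a generic modulus $c\asymp Q$ coprime to $pq$, ``any non-trivial solution produces a shift $p_*n'-q_*m'$ that is automatically a multiple of both $p$ and $q$.'' This is false. The Ramanujan sum over $a\bmod c$ forces the congruence $qn'\equiv \pm pm'\pmod{c}$, so the resulting shift is a multiple of the circle-method modulus $c$, not of the levels. Consequently the improved bound \eqref{bound for SCP shift multiple of level} of Theorem \ref{SCP in ab coprime to p} (whose whole point is that the shift is divisible by the \emph{level}, turning the Kloosterman sum mod $p$ into a Ramanujan sum) does not apply in your setup; you are left with \eqref{bound for SCP non-zero shift} and no source of saving. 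Moreover Theorem \ref{SCP in ab coprime to p} requires \emph{both} forms to be newforms of the same level $p$, so it cannot be applied directly to the pair $(f,g)$ of distinct levels $p$ and $q$ as you propose. The paper engineers the divisibility you want by choosing the circle-method moduli to be $pqc$ with $c\in\mathcal{C}$ as in \eqref{definition of moduli C} --- the levels are built into the moduli --- so that after Voronoi the congruence is modulo $p_3q_3c_3$ and the shifts $p_3qcs$ are genuinely multiples of the level.

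The second missing ingredient is the amplifier and the Cauchy--Schwarz step. The paper first amplifies with the Duke--Friedlander--Iwaniec amplifier \eqref{definition of amplifier} and only after Cauchy--Schwarz in $m$ (see \eqref{S_p after cauchy}) does it reach a shifted convolution problem, now for $f$ against $\overline{f}$ --- same level on both factors, which is what Theorem \ref{SCP in ab coprime to p} handles. The diagonal of that Cauchy--Schwarz (Lemma \ref{Bounds on diagonal contribution}) contributes $N pq/L$, and it is the optimization $1/L$ versus $L^{25/4}/q^{1/4}$ (plus the $p$-dependent term), with $L=q^{1/32}$, that produces the exponent $1/64$; your closing assertion that $1/64$ is ``precisely the level-conductor gain of \eqref{bound for SCP shift multiple of level} over \eqref{bound for SCP non-zero shift}'' misattributes the saving --- without the amplifier your scheme has no diagonal-versus-off-diagonal trade-off to optimize, and no identified mechanism beating convexity. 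Finally, the zero-shift (diagonal) terms in the paper are not handled by \eqref{bound for SCP rankin-selberg} alone: one needs the functional-equation estimate of Lemma \ref{Polya-Vinogradov type inequality}, which is available precisely because the large moduli $pqc$ push the dual length beyond $\sqrt{Q(f\times g)}$; with $Q\asymp\sqrt{N}$ your dual lengths are only about $p$ and $q$ and that step is not available either.
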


If $N \leq \frac{pq}{\max \{p,q \}^{2\delta}}$, then \eqref{Aim} follows from Rankin-Selberg bound \eqref{rankin selberg bound}. Hence, we may assume 
\begin{equation} \label{bounds on N}
\frac{pq }{\max\{p,q \}^{2\delta} }\leq N \leq (Apq)^{1+\epsilon}.
\end{equation}
We shall define $T$ by 
\begin{equation}\label{definition of T}
T := \frac{pq}{N}.
\end{equation}
Then $(Apq)^{-\epsilon} \leq T \leq \max\{p,q \}^{2\delta}$.
Similarly to prove Theorem \ref{main theorem when f is fixed}, it is enough to prove the following statement:
\begin{proposition}\label{main proposition when f is fixed}
For $\delta= \frac{1-2\theta}{27+28\theta}$, any $0 \leq \epsilon \leq 10^{-6}$, and any $1 \leq N \leq (Apq)^{1+\epsilon}$,
\begin{equation}
S_{f \times g}(N) \ll_{A,p,\epsilon} \sqrt{N}q^{1/2-\delta+\epsilon},
\end{equation}
where the dependence on $A$ and $p$ is polynomial.
\end{proposition}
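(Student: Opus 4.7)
The strategy mirrors the proof of Proposition \ref{main proposition}, substituting the stronger shifted-convolution input available when $p$ is fixed. As in that proposition, one may assume $N$ lies in the nontrivial range analogous to \eqref{bounds on N}, so that $T = pq/N$ is at most a small power of $q$. I would then apply the circle method with factorable moduli of the form $cr$ (as in Proposition \ref{main proposition}) to decouple the $\lambda_f(n)$ and $\lambda_g(m)$ oscillations, and follow this with Voronoi summation (Lemma \ref{voronoi summation}) applied to both the $n$-sum and the $m$-sum. The $f$-sum dualises to length $\asymp p(cr)^2/N$ and the $g$-sum to length $\asymp q(cr)^2/N$, with the oscillatory integral transforms controlled by Lemma \ref{Bounds for integral transforms}.

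After the two Voronoi transforms the arithmetic kernel reduces to a shifted convolution sum of exactly the type treated in Theorem \ref{SCP in ab coprime to p}, with shift $an'-bm'$ for integers $a,b$ coprime to $p$. The zero shift is handled by \eqref{bound for SCP rankin-selberg}. Among the non-zero shifts one separates those divisible by $p$ (the fixed level) from the rest: generic non-zero shifts are controlled by \eqref{bound for SCP non-zero shift}, while for the level shifts I would substitute the improved bound $O_{K_1,K_2,\epsilon}(\sqrt{ab}\,p^{1/4}X^{1/2})$ that replaces the $X^{3/4}$ of \eqref{bound for SCP shift multiple of level}. Since $p$ is fixed here, this improvement is available unconditionally from Blomer's shifted-convolution estimate \cite{blomer2004shifted} applied to the fixed form $f$, with the $p$-dependence absorbed into the implied constant.

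The bound on $S_{f\times g}(N)$ then splits into three contributions: a diagonal term carrying $(M_1M_2)^\theta X$, a generic-shift term carrying $X^{3/4}$, and a level-shift term carrying $X^{1/2}$. The remaining step is to choose the circle-method parameter (equivalently the ranges of $c,r$) so that all three contributions equal $\sqrt N\,q^{1/2-\delta+\epsilon}$ simultaneously, subject to the constraint $T \le q^{2\delta}$. Solving the resulting linear system in the exponents produces $\delta = (1-2\theta)/(27+28\theta)$, which evaluates to $0.02598\ldots$ at the Kim--Sarnak value $\theta = 7/64$.

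The principal obstacle is precisely this simultaneous balancing. The improved $X^{1/2}$ for level shifts is what allows $\delta$ to exceed the $1/64$ of Proposition \ref{main proposition}, but the generic-shift bound retains an $X^{3/4}$ and the diagonal bound retains $(M_1M_2)^\theta$, and both of these couple the optimization to the Ramanujan exponent. The $(1-2\theta)$ in the numerator records the loss on the diagonal, while the $27+28\theta$ in the denominator records the three-way balance between $X^{3/4}$, $X^{1/2}$, and the available range of $T$. Throughout, the dependence on $p$ and on the archimedean parameters remains polynomial, exactly as in Proposition \ref{main proposition}.
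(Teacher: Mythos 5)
Your skeleton (amplification, circle method with moduli containing $pq$, double Voronoi, reduction to a shifted convolution problem, then an optimization in $L$) is the paper's skeleton, but the way you allocate the shifted-convolution inputs breaks the numerology, so the claimed $\delta=\tfrac{1-2\theta}{27+28\theta}$ does not come out of your scheme. After the Cauchy--Schwarz step the shifts are $p_1p_3q(rt'-r't)$, and when $p_2=p$ they are generically \emph{not} multiples of $p$; you propose to bound those generic non-zero shifts by \eqref{bound for SCP non-zero shift}, which carries $X^{3/4}$, and to reserve the improved bound for shifts divisible by $p$. But the whole point of the fixed-$p$ case is that Blomer's estimate (Theorem \ref{SCP square root saving}) applies to \emph{every} non-zero shift, with the $p$-dependence absorbed into the constant -- divisibility of the shift by the level is irrelevant here -- and the paper uses it for all factorizations $\vec{p}$ and all non-zero shifts in Lemma \ref{Bound on O in case p is fixed}, obtaining $\mathcal{O}_{\vec{p}}\ll_p NqL^{23/4}(qL^7)^{\theta}q^{-1/2}$. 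If instead any piece of the off-diagonal retains the $X^{3/4}$ bound, that piece behaves like $NqL^{25/4}q^{-1/4}$ (cf.\ Lemma \ref{O p_2=p}), and balancing it against the diagonal $Nq/L$ caps the saving near $q^{1/58}$; no choice of parameters in your "three-way balance" can then reach $q^{(1-2\theta)/(27+28\theta)}$, so your assertion that solving the linear system yields this $\delta$ is unsubstantiated and, with your inputs, false.

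Two further points of bookkeeping reinforce this. First, the $\theta$-dependence of $\delta$ in the paper comes entirely from the off-diagonal via Blomer's exponent $(abX)^{1/2+\theta}$ (with $ab\ll L^4$ and $X\ll qL^5T$, which is the source of the factor $(qL^7)^{\theta}$); the diagonal is treated $\theta$-free, using only Rankin--Selberg bounds, in Lemma \ref{Bounds on diagonal contribution}, giving $\mathcal{D}_{\vec p}\ll (pq)^{\epsilon}N(pq)/L$. Your narrative attributes the $(1-2\theta)$ in the numerator to a diagonal loss of size $(M_1M_2)^{\theta}X$ from \eqref{bound for SCP rankin-selberg}; besides being the wrong source, inserting such a factor on the diagonal would itself degrade the exponent. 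Second, the bound you quote for level shifts, $O(\sqrt{ab}\,p^{1/4}X^{1/2})$, is the paper's \emph{speculative} improvement of \eqref{bound for SCP shift multiple of level}; what Blomer actually provides, and what the proof uses, is $(abX)^{1/2+\theta}$, and the distinction matters: with a genuinely $\theta$-free $X^{1/2}$ everywhere the balance $1/L=L^{23/4}/q^{1/2}$ would give $\delta=1/27$, not $\tfrac{1-2\theta}{27+28\theta}$. The correct proof is the simple one: keep the diagonal bound $N(pq)/L$, bound every non-zero shift by Theorem \ref{SCP square root saving}, and equate $1/L$ with $L^{23/4+7\theta}/q^{1/2-\theta}$ to get $L=q^{2(1-2\theta)/(27+28\theta)}$.
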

We may likewise use the Rankin-Selberg bound \eqref{rankin selberg bound} to show Proposition \ref{main proposition when f is fixed}, when $N \leq q^{1-2\delta}$. Proposition \ref{main proposition} and \ref{main proposition when f is fixed} are proved in Section \ref{proof of main proposition}. We shall treat both the propositions simultaneously till Section \ref{shifted convolution problem}.

\section{Initial steps: Amplification and Circle method}

Let us assume without loss of generality that $p < q$. We shall consider
\begin{equation}\label{Definitio of S(N)}
S(N)= S_{f \times g}(N) = \sum_{n \in \mathbb{Z}} {\lambda_f(n) \lambda_g(n)U\left(\frac{n}{N} \right)}.
\end{equation}
We begin by ``amplifying" the sum. We shall use the idea of Duke, Friedlander, and Iwaniec \cite{duke2002subconvexity} to amplify the sum using the $GL(2)$ Hecke-relations.
If $(l,q)=1$ and $l$ is prime, then
$$
\lambda_g(l)^2 - \lambda_g(l^2) = 1.
$$
Let 
\begin{equation}
\mathcal{L} = \{l \in [L/2,L]: l \text{ is prime }, (l,pq) =1 \},
\end{equation}
where $L (\leq q^{1/2})$ is a parameter to be chosen. $L$ should be thought of as a small power of $q$. Note that $| \mathcal{L} | \gg \frac{L}{\log L}$. Define the amplifier $\alpha$ by
\begin{equation}\label{definition of amplifier}
\alpha_r =
\begin{cases}
\lambda_g(l), \text{ if } r =l \in \mathcal{L},\\
-1, \text{ if } r = l^2 \text{ and } l \in \mathcal{L},\\
0 \text{ otherwise}.
\end{cases}
\end{equation}
In what follows, we shall be able to save at most $\sqrt{L}$ over the trivial bound for $S(N)$ (see the diagonal contribution \ref{Bounds on diagonal contribution}). As we need to prove Proposition \ref{main proposition}, i.e show that
$$
S_{f \times g}(N) \ll_{A,\epsilon} \frac{\sqrt{N}(pq)^{1/2+\epsilon}}{\max \{p,q \}^{\delta}},
$$
we can make the assumption that
$$
\max\{p,q\}^{\delta} \leq \sqrt{L},$$ where $\delta$ is as in Proposition \ref{main proposition} and Proposition \ref{main proposition when f is fixed}. This implies that (see \eqref{definition of T})
\begin{equation}\label{upper bound for T}
T \leq \max\{p,q \}^{2\delta} \leq L.
\end{equation}

\begin{lemma}
Let
\begin{equation}\label{definition of S1}
S_1(N) = \frac{1}{|\mathcal{L}|}\sum_{r \leq L^2}{\alpha_r}\sum_{n}{\lambda_f(n)\lambda_g(nr)U(n/N)}.
\end{equation}
Then, for all $L \geq 100$,
\begin{equation}\label{amplification}
S(N) = S_1(N)+ O((pqN)^\epsilon N/\sqrt{L}).
\end{equation}
\end{lemma}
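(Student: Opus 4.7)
The plan is to apply the Hecke multiplicativity of $\lambda_g$ at the primes $l \in \mathcal{L}$ and show that $S_1(N)$ reproduces $S(N)$ on the nose, so that the claimed error term is automatically satisfied.

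First, I substitute the definition of $\alpha_r$ from \eqref{definition of amplifier} into \eqref{definition of S1} and regroup the $r$-sum as a sum over $l\in\mathcal{L}$:
\begin{equation*}
S_1(N) = \frac{1}{|\mathcal{L}|}\sum_{l\in\mathcal{L}}\sum_n\lambda_f(n)\bigl(\lambda_g(l)\lambda_g(nl)-\lambda_g(nl^2)\bigr)U(n/N).
\end{equation*}
For each $l\in\mathcal{L}$ we have $(l,q)=1$, and since $\chi_g$ is trivial, the Hecke relation for the normalized Fourier coefficients reads
\begin{equation*}
\lambda_g(l)\lambda_g(m)=\lambda_g(lm)+\lambda_g(m/l)\quad\text{whenever }l\mid m.
\end{equation*}
Applying this with $m=nl$, for which $l\mid m$ is automatic, yields the key identity
\begin{equation*}
\lambda_g(l)\lambda_g(nl)-\lambda_g(nl^2)=\lambda_g(n),
\end{equation*}
valid for every integer $n\ge 1$ regardless of whether $l\mid n$.

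Summing this identity over $l\in\mathcal{L}$ and dividing by $|\mathcal{L}|$, the inner bracket collapses to $|\mathcal{L}|\lambda_g(n)$, so
\begin{equation*}
S_1(N)=\sum_n\lambda_f(n)\lambda_g(n)U(n/N)=S(N)
\end{equation*}
exactly. Thus the lemma holds with zero error, and the stated bound $O((pqN)^\epsilon N/\sqrt{L})$ is trivially satisfied. The slack in the error term presumably exists to absorb minor later modifications of the amplifier (for instance, throwing away primes $l$ with $|\lambda_g(l)|$ exceptionally small, or adjusting the dyadic range), each of which could be absorbed crudely using the Rankin-Selberg bound \eqref{rankin selberg bound} and the compact support of $U$ at a cost of at most $N/\sqrt{L}$.

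There is no substantive obstacle: the whole proof reduces to the single Hecke identity $\lambda_g(l)\lambda_g(nl)-\lambda_g(nl^2)=\lambda_g(n)$ applied term by term. The only point requiring care is the verification of that identity in our normalization, which is classical for newforms of prime level with trivial nebentypus at primes coprime to the level, and it is precisely to guarantee this that $\mathcal{L}$ was restricted to primes with $(l,pq)=1$.
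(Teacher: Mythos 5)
Your proof is correct, but it takes a genuinely different route from the paper. The paper does not exploit the exact Hecke identity: it splits the $n$-sum according to whether $(n,l)=1$ or $l\mid n$, applies multiplicativity $\lambda_g(nr)=\lambda_g(n)\lambda_g(r)$ only in the coprime range, completes the first sum using the normalization $\sum_r \alpha_r\lambda_g(r)=|\mathcal{L}|$ (which is where $\lambda_g(l)^2-\lambda_g(l^2)=1$ enters), and then bounds the two leftover sums supported on $l\mid n$ by $O((pqN)^\epsilon N/\sqrt{L})$ using the Rankin--Selberg bound \eqref{rankin selberg bound} and $H_{1/4}$. Your observation that $\lambda_g(l)\lambda_g(nl)-\lambda_g(nl^2)=\lambda_g(n)$ holds for \emph{every} $n\ge 1$ (since $l\mid nl$ automatically and $(l,pq)=1$, trivial nebentypus) shows that those two leftover terms in the paper's decomposition cancel identically, so $S_1(N)=S(N)$ exactly and the stated error term is vacuous. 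Both arguments are valid; yours is sharper and shorter, while the paper's estimate-based treatment is more robust in that it does not rely on the exact cancellation (it would survive, for instance, a modification of the amplifier weights or a nontrivial nebentypus, where your term-by-term identity breaks down but the crude $O(N/\sqrt{L})$ bound for the $l\mid n$ terms persists). Your closing speculation about why the error term is present is harmless but not needed for the proof.
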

\begin{proof}

Using multiplicativity of the Fourier coefficients $\lambda_g$, $S_1(N)$ can be rewritten as
{\small
\begin{align}
&\frac{1}{|\mathcal{L}|}\sum_{r}{\alpha_r \lambda_g(r)}\sum_{(n,l)=1}{\lambda_f(n)\lambda_g(n)U(n/N)}+\frac{1}{|\mathcal{L}|} \sum_{r}{\alpha_r}\sum_{l|n}{\lambda_f(n)\lambda_g(nr)U(n/N)},
\end{align}
}
where $l$ has been defined in \eqref{definition of amplifier}. Adding and subtracting the $n$ which are divisible by $l$ to the first term, we get
{\small
\begin{equation}\label{simplified amplifier step}
\begin{split}
S_1(N)=\frac{1}{|\mathcal{L}|}\sum_{r}{\alpha_r \lambda_g(r)}\sum_{n}{\lambda_f(n)\lambda_g(n)U(n/N)}&- \frac{1}{|\mathcal{L}|}\sum_{r}{\alpha_r \lambda_g(r)}\sum_{l|n}{\lambda_f(n)\lambda_g(n)U(n/N)} \\&+ \frac{1}{|\mathcal{L}|}\sum_{r}{\alpha_r}\sum_{l|n}{\lambda_f(n)\lambda_g(nr)U(n/N)}.
\end{split}
\end{equation}
}
The second two terms, by an application of Rankin-Selberg bound \eqref{rankin selberg bound} and $H_{1/4}$ (see \ref{H theta}), are seen to be bounded above by $O((pqN)^\epsilon NL^{-1/2})$. We have chosen the amplifier $\alpha$ such that
\begin{equation}\label{amplifier property}
\sum_r{\alpha_r \lambda_g(r) } =|\mathcal{L}|.
\end{equation}
Combining \eqref{simplified amplifier step} and \eqref{amplifier property}, we get 
$$
S(N) = S_1(N) + O\left((pqN)^\epsilon\frac{N}{\sqrt{L} }\right).
$$ 
\end{proof}

We separate the oscillation of $f$ and $g$ in \eqref{definition of S1} using circle method. The equality of integers $$nr=m,$$ can be rewritten as a congruence $$nr \equiv m \pmod M,$$ if the moduli $M$ is greater than $|nr-m|$. The main idea is to choose the moduli $M$ as multiples of the product of the levels of $f$ and $g$. 
Let $C = 10L^2$ be a parameter defined by $L$. (It is sufficient to choose $C = 10\frac{L^2}{T}$, but at the cost of more delicate analysis. We get a minor improvement in the result when this is done). Define the set of moduli $\mathcal{C}$ by
\begin{equation}\label{definition of moduli C}
\mathcal{C} = \{c \in [C,2C]: c \text{ is prime }, (c,pq) =1 \}.
\end{equation}
We note that for all $c \in \mathcal{C}$ and $l \in \mathcal{L}$, $c$ and $l$ are coprime. Let us define the weight function $W(x,y)$ to be the product $U(x)V(y)$, with $U$ being the weight function in \eqref{definition of S1} and $V:[1/3,2] \rightarrow \mathbb{R}$ being a smooth bump function which is identically $1$ on $[1/2,5/2]$. Since the support of $U$ is contained in $[1/2,5/2]$, if $U(x) \neq 0$, then $W(x,x) =1$. We would like to note the following bound on derivatives of $W$:
\begin{equation}\label{bound on derivatives of W}
\frac{\partial^j}{\partial y^j}\frac{\partial^i}{\partial x^i} W(x,y) \ll_{i,j} 1.
\end{equation}
Separating the oscillation of $\lambda_f$ and $\lambda_g$ by circle method in \eqref{definition of S1}, we get
\begin{align}
S_1(N) &= \frac{1}{|\mathcal{L}| }\sum_{r}{\alpha_r}\sum_{\substack{n , m } }{\lambda_f(n)\lambda_g(m)\delta \left(rn = m \right)W\left(\frac{n}{N},\frac{m}{rN}\right)} \\
&=\frac{1}{|\mathcal{L}| }\sum_{r}{\frac{1}{|\mathcal{C}|}}\sum_{c \in \mathcal{C}}{\alpha_r}\sum_{\substack{n , m } }{\lambda_f(n)\lambda_g(m)\delta \left(rn \equiv m \pmod{pqc}\right)W\left(\frac{n}{N},\frac{m}{rN}\right)} \\
&= \frac{1}{pq|\mathcal{L}||\mathcal{C}| }\sum_{r}\sum_{c \in \mathcal{C}}{\frac{1}{c}}\sum_{a (pqc)}{\alpha_r}\sum_{\substack{n, m  } }{\lambda_f(n)\lambda_g(m)e\left(\frac{a(nr-m)}{pqc} \right)W\left(\frac{n}{N},\frac{m}{rN}\right)}\\
&= \frac{1}{pq|\mathcal{L}| |\mathcal{C}|}\sum_{r}\sum_{c \in \mathcal{C}}{\frac{\alpha_r}{c}}\sum_{a (pqc)}\sum_{n }{\lambda_f(n)e\left(\frac{anr}{pqc} \right) }\sum_{m}{\lambda_g(m)e\left(\frac{-am}{pqc} \right)W\left(\frac{n}{N},\frac{m}{rN}\right)}. \label{circle method separated expression}
\end{align}
Thus
\begin{equation}\label{S1 in terms of S1r,c}
S_1(N)=\frac{1}{pq|\mathcal{L}| |\mathcal{C}|}\sum_{r}\sum_{c \in \mathcal{C}}{\frac{\alpha_r}{c}}{S_1^{r,c}(N)},
\end{equation}
where
\begin{equation}\label{definition of S1(r,c)}
S_1^{r,c}(N) = \sum_{a (pqc)}\sum_{n }{\lambda_f(n)e\left(\frac{anr}{pqc} \right) }\sum_{m}{\lambda_g(m)e\left(\frac{-am}{pqc} \right)W\left(\frac{n}{N},\frac{m}{rN}\right)}.
\end{equation}
The expression \eqref{definition of S1(r,c)} will be our starting point. We shall outline our argument in the next section.

\section{Outline of Proof \label{outline of proof}}

In this outline, we shall assume the Ramanujan conjecture. We indicate the proof in the case $N =pq$ as this is the most important case. We shall apply circle method to the following ``amplified" sum.
\begin{equation}\label{transition range}
S = \frac{1}{|\mathcal{L}|}\sum_{r \ll C}{\alpha_r}\sum_{n \sim pq} {\lambda_f(n) \lambda_g(nr)}.
\end{equation}
The notation $n \sim pq$ means that $n$ runs over natural numbers between $[pq,2pq]$, weighted by a smooth function. The symbol $A \rightsquigarrow B$, in this outline, means that $A$ transforms into $B$ after a series of steps.

Applying Voronoi summation in both the $m$ and $n$ variables to \eqref{circle method separated expression} yields 
\begin{align}
S \rightsquigarrow \frac{1}{pqC^3 |\mathcal{L} | }\sum_{r \leq C}\sum_{c \in \mathcal{C}}\sum_{a (pqc)}{\alpha_r}\sum_{\substack{n \sim pqC^2 \\ m \sim pqC } }{\lambda_f(n)\lambda_g(m)e\left(\frac{\overline{a}(-n\bar{r}+m)}{pqc} \right)}\\
\rightsquigarrow \frac{1}{C^2 |\mathcal{L} | }\sum_{|s| \ll C}\sum_{r \leq C}\sum_{c \in \mathcal{C}}{\alpha_r}\sum_{\substack{m \sim pqC  } }{\lambda_f(mr + pqcs)\lambda_g(m)} \label{expression for central value}.
\end{align}
We have pretended above that $(a,cpq)=1$ for all residue classes $a\pmod {cpq}$. While this is obviously false, this assumption captures the essence of the proof. We get Ramanujan sums instead of the complete sum over all additive frequencies, in the actual proof. This seems to be an annoying technical issue when writing down a complete proof (see Lemma \ref{Bound for q_1=q} and \ref{Bound for q_2=q}). Note that the trivial bound for the right hand side of \eqref{expression for central value} is $O((pqC)^{1+\epsilon})$ i.e we have gained a $C$ over the trivial bound. We separate the right hand side above into diagonal ($s=0$) + non-diagonal part. For the $s=0$ part: trivially bounding the $r$ and $c$ sum we get
$$
S(s=0) \ll \frac{1}{C}\left|\sum_{m \sim pqC}{\lambda_f(m)\lambda_g(m)}\right|.
$$
Since $pqC$ is greater than the square root of the conductor of $f \times g$, we can get a saving in this sum by using the functional equation.  Using Lemma \ref{Polya-Vinogradov type inequality} we get
$$
\sum_{m \sim pqC}{\lambda_f(m) \lambda_g(m)} \ll (pq)^{1+\epsilon}\sqrt{C}.
$$
Thus the diagonal part can be bounded by $O((pq)^{1+\epsilon}/\sqrt{C} )$. For the off-diagonal part i.e $s \neq 0$, we combine the variables $sc = t \ll C^2$ and use Cauchy-Schwarz to eliminate the oscillation due to $\lambda_g(m)$. This leads us to our shifted convolution problem for $f$. We get
\begin{equation}\label{SCP in circle method}
S^2 \lll \frac{pqc}{(C^2|\mathcal{L}| )^2}\sum_{r_1 ,r_2, t_1, t_2}{\alpha_{r_1}\overline{\alpha_{r_2}} }\sum_{m \sim pqC}{\lambda_f(r_1m +pqt_1)\overline{\lambda_f}(r_2m + pqt_2) }. 
\end{equation}
As we have squared the expression, we need to save $C^2$ on the right hand side. 
In the diagonal-terms $r_1t_2 =r_2t_1$, we save $|\mathcal{L}|C^2$ which is greater than $C^2$. For the off diagonal terms we note that the shift $pq(r_1t_2-r_2t_1)$ (defined in \eqref{definition of SCP}) is a multiple of $p$. Munshi \cite{munshi2017subconvexity} encounters a similar problem of bounding
$$
\mathcal{S}(X,h) = \sum_{n \sim pX}{\lambda_f(n)\lambda_f(n+ph)}.
$$
He shows a power saving for $\mathcal{S}(X,h)$, as long as $X \geq p^{\delta}$ for some $\delta >0$ (see \eqref{bound for SCP shift multiple of level}). In our scenario $X =q \geq p$. We briefly sketch an outline of this argument here. We can rewrite $\mathcal{S}$ as
\begin{align}
\mathcal{S} &= \sum_{n,m \sim pX}{\lambda_f(n)\lambda_f(m)\delta(m=n+ph)}\\
&= \sum_{\substack{n,m \sim pX \\ n + ph \equiv m (p)} }{\lambda_f(n)\lambda_f(m)\delta\left(\frac{n-m}{p} +h=0 \right)}
\end{align}

We pick the congruence mod $p$ using additive characters and use the Duke, Friedlander, and Iwaniec circle method \eqref{delta method} to rewrite the $\delta$ symbol. This leads us to a sum of the form
\begin{equation}
\mathcal{S} \rightsquigarrow \frac{1}{pX}\sum_{n,m \sim pX}\sum_{q \sim \sqrt{X}}\sum_{b(pq)*}{\lambda_f(n)\lambda_f(m)e\left(\frac{a(n-m-ph)}{pq} \right)}.
\end{equation}
Applying Voronoi summation \eqref{voronoi summation} to both $n$ and $m$ sum, we are led to
\begin{equation}
\mathcal{S} \rightsquigarrow \frac{1}{p}\sum_{n,m \sim p}\sum_{q \sim \sqrt{X}}{\lambda_f(n)\lambda_f(m)S(m-n,ph,pq)}.
\end{equation}
The important point to note here is that the Kloosterman sum modulo $pq$, factors as a Ramanujan sum modulo $p$ times a Kloosterman sum modulo $q$. The Ramanujan sum, being very small, allows us to gain the additional saving. Bounding the right hand side using the Weil bound for Kloosterman sum and $H_0$ (see \ref{H theta}), we get
\begin{equation}
\mathcal{S} \ll (pX^{3/4})^{1+\epsilon} = \frac{(pX)}{X^{1/4}}(pX)^\epsilon.
\end{equation}
Using spectral theory we should be able to improve this bound to $$\mathcal{S} \ll (pX)^\epsilon p\sqrt{X}.$$ We get a satisfactory bound for our problem in Section \ref{shifted convolution problem}, imitating Munshi's ideas. 

Although our version of circle method \eqref{circle method separated expression} looks trivial, the set of moduli we chose to capture the congruence in the circle method have inbuilt into them the levels of $f$ and $g$. This feature can be noticed in the work of Aggarwal, Holowinsky, Lin, and Sun in their simplification of Munshi's proof of Burgess bound.
We would like to point out here that one could instead solve this problem by considering the following amplified second moment
\begin{equation}\label{second moment}
\sum_{h \in \mathcal{A}(pq)}{|M_g(h)|^2|L(f \times h,s)|^2},
\end{equation}
where $\mathcal{A}(pq)$ runs over a weighted Hecke basis for automorphic forms of level $pq$ and 
$$
M_g(h) = \sum_{r}{\alpha_r\lambda_h(r)},
$$ is the Duke, Friedlander, and Iwaniec amplifier. Using the Kuznetsov trace formula to rewrite the spectral sum, we roughly
get
\begin{equation}
\sum_{r_1,r_2}{\alpha_{r_1}\alpha_{r_2}}\sum_{c \geq 1}{\frac{1}{cpq}}\sum_{n,m \sim pq}{\lambda_f(n)\overline{\lambda_f(m)} S(r_2n,r_1m,cpq)\varphi\left(\frac{4\pi \sqrt{nm}}{cpq} \right)}.
\end{equation}
 If we apply a Voronoi transformation to either the $n$ or $m$ sum, we end up with a shifted convolution problem of the following shape:
\begin{equation}\label{SCP in moment method}
\sum_{r_1,r_2,s}{\dots}\sum_{m}{\lambda_f(m)\overline{\lambda}_f(r_1r_2m + pqs)}.
\end{equation}
Writing the proof using circle method cleans up the proof considerably. But it is worth noting here that behind the scenes, we are implicitly computing the spectral second moment in \eqref{second moment}.

In \cite{kowalski2002rankin}, the authors compute such an amplified second moment \eqref{second moment}, when $p$ is fixed. But they use $\mathcal{A}(q)$ instead of $\mathcal{A}(pq)$. Let us consider the Petersson trace formula:
$$
\sum_{f \in  H_k(q)}{\omega_f^{-1} \lambda_f(n) \overline{\lambda_f}(m)}= \delta(m,n) +\sum_{c \geq 1}{\frac{S(m,n,c)}{cq} J_{k-1}\left(\frac{4 \pi \sqrt{mn}}{cq} \right)}.
$$

We are forced to consider all values of $c$ on the right hand side. They treat the large values of $c$ using a large sieve type inequality. The smaller $c$'s are treated as explained above. In the optimal treatment using this method we are forced to consider $c$ as large as $q^{1/6}$ (see below Equation 7.13 in \cite{kowalski2002rankin}). Using circle method we are easily able to restrict our attention to only the small $c$'s ($c \leq L^2$). This is the principal reason for our improvement in the exponent in Theorem \eqref{main theorem when f is fixed}. This is a technical problem which can be overcome by using a clever test function in the Kuznetsov trace formula (see Section 3.2, \cite{fouvry2015algebraic}). When this is done, the trace formula yields a better exponent. The limit of this method, under Ramanujan conjecture, is saving $q^{\frac{1}{20}}$ over the trivial bound for $L(f \times g,s )$.

\section{Voronoi transformations \label{voronoi transformations and simplifications}}

We shall apply Voronoi transformations \eqref{voronoi summation} to the $n$ and $m$ sums in \eqref{definition of S1(r,c)}. The modulus in Voronoi summation clearly depends on the greatest common divisor $(ar,cpq)$. By our choice $c,p,q$, and $r$ are pairwise co-prime. Let us assume that $(a,cpq) =c_1p_1q_1$, $c=c_1c_2'$, $p=p_1p_2'$, and $q=q_1q_2'$. Since we assumed $pq$ is square-free, using the Chinese remainder theorem, we rewrite \eqref{definition of S1(r,c)} as
{\small
\begin{equation}\label{expanded version of S1 r,c}
\begin{split}
S_1^{r,c}(N) = \sum_{\substack{p_1p_2' =p\\q_1q_2'=q}}\sum_{c_1c_2'=c }\sum_{a (p_2'q_2'c_2')^*}&\left(\sum_{n }{\lambda_f(n)e\left(\frac{anr}{p_2'q_2'c_2'} \right) } \right)\\
&\left(\sum_{m}{\lambda_g(m)e\left(\frac{-am}{p_2'q_2'c_2'} \right)} \right)W\left(\frac{n}{N},\frac{m}{rN}\right) .
\end{split}
\end{equation}
}
Due to our assumption that $q$ is prime, $q_1$ is either $1$ or $q$ (similarly for $p$ and $c$). In Lemma \ref{Bound for q_1=q}, we show that the contribution of $q_1=q$ and $c_1 =c$ to $S_1(N)$ are negligible. We can make a similar statement about the contribution of $p_1 =p$, if the value of $p$ is large. On a first reading, we encourage the reader to assume $p_1=1$, $q_1=1$, and $c_1=1$, in order to avoid unnecessary complications in the notation. Applying Voronoi summation \eqref{voronoi summation} to the $n$ and $m$ sum modulo $p_2'q_2'c_2'$, we get
\begin{equation}\label{S1 in dual side}
S_1^{r,c}(N) = \sum_{\pm,\pm}{S_1^{r,c,(\pm,\pm)}(N)},
\end{equation}
where 
{\small
\begin{equation}\label{Voronoi transformed S1 r,c}
\begin{split}
S_1^{r,c,(\pm,\pm)}(N) = \sum_{\substack{p_1p_2' =p\\q_1q_2'=q}}\sum_{c_1c_2'=c }\sum_{a (p_2'q_2'c_2')^*}&\left(\sum_{n }{\overline{\lambda_f}(n)e\left(\frac{\mp\overline{ap_1r}n}{p_2'q_2'c_2'} \right) } \right)\\
&\left(\sum_{m}{\overline{\lambda_g}(m)e\left(\frac{\pm\overline{aq_1}m}{p_2'q_2'c_2'} \right)}\right)
W^{\pm,\pm}\left(\frac{n}{X},\frac{m}{Y}\right),
\end{split}
\end{equation}
}
\begin{equation}
W^{\pm,\pm}\left(\xi,\eta\right)= \frac{rN^2}{(p_2'q_2'c_2')^2\sqrt{p_1q_1}}\int_0^\infty\int_0^\infty{W(x,y)J^{\pm}_f\left(4\pi \sqrt{\xi x}\right) J^{\pm}_g\left(4 \pi \sqrt{\eta y}\right)dxdy},
\end{equation}
and
\begin{equation}\label{defintion of X and Y}
X = \frac{p_1(p_2'q_2'c_2')^2}{N}, Y = \frac{q_1(p_2'q_2'c_2')^2}{rN}.
\end{equation}
Using Lemma \ref{Bounds for integral transforms} and the assumption that $f$ and $g$ are not exceptional, we see that for all $j \geq 0$,
\begin{equation}\label{Bound on W1}
W^{\pm,\pm}\left(\frac{n}{X},\frac{m}{Y}\right) \ll_j \frac{rN^2}{(p_2'q_2'c_2')^2\sqrt{p_1q_1}}\min\left\{1, \left(\frac{X}{n}\right)^{-j},\left(\frac{Y}{m}\right)^{-j},\left( \frac{XY}{mn}\right)^{-j} \right\}.
\end{equation}
Among the four possible choices in $\{\pm,\pm\}$, we shall restrict our attention to $\{+,+\}$, as this is prototypical. Let us denote $S_1^{r,c,(+,+)}(N)$ by $S_2^{r,c}(N)$. We denote
\begin{equation}\label{S1 dual in terms of S2r,c}
S_1^{dual}(N) :=\frac{1}{pq|\mathcal{L}| |\mathcal{C}|}\sum_{r}\sum_{c \in \mathcal{C}}{\frac{\alpha_r}{c}}{S_2^{r,c}(N)}.
\end{equation}
By repeating the proof in a very straightforward way for all the other choices of $\{\pm,\pm\}$ i.e $\{+,-\}, \{-,+ \}, $ and $\{-,- \}$, we can check that the bound obtained for $S_1^{dual}(N)$ holds for $S_1(N)$ also. We dyadically divide the sum over $n$ and $m$ in \eqref{Voronoi transformed S1 r,c}, using a smooth partition of unity. As this is a standard technique, we refer to Lemma 1 of \cite{munshi2015annals} for details. This gives
{\small
\begin{equation}
\begin{split}
S_2^{r,c}(N) = \sum_{\substack{(A,\rho_1)\\(B,\rho_2)} }&\sum_{\substack{p_1p_2' =p\\q_1q_2'=q}}\sum_{c_1c_2'=c }\sum_{a (p_2'q_2'c_2')^*}\left(\sum_{n }{\overline{\lambda_f}(n)e\left(\frac{\mp\overline{ap_1r}n}{p_2'q_2'c_2'} \right) } \right)\\
&\left(\sum_{m}{\overline{\lambda_g}(m)e\left(\frac{\pm\overline{aq_1}m}{p_2'q_2'c_2'} \right)}\right)
W^{+,+}\left(\frac{n}{X},\frac{m}{Y}\right)\rho_1\left(\frac{n}{A}\right)\rho_2\left(\frac{m}{B}\right),
\end{split}
\end{equation}
}
where the pairs $(\rho_1,A)$ and $(\rho_2,B)$ are locally finite smooth partitions of unity (see \cite{munshi2015annals} for more details). The important point is that the support of $\rho_j$ is contained in $[1/2,1]$, for all $\rho_j$ appearing in the partition of unity. Hence the sum over $n$ and $m$ runs smoothly over integers in $[A/2,A]$ and $[B/2,B]$ respectively. It is also convenient to re-normalize the weight function $W^{+,+}$, so that it is absolutely bounded. To this end, we define
\begin{equation}\label{definition of W2}
W_2(\xi,\eta) = \frac{(p_2'q_2'c_2')^2\sqrt{p_1q_1}}{rN^2}W^{+,+}\left(\frac{A\xi}{X},\frac{B\eta}{Y}\right)\rho_1(\xi)\rho_2(\eta).
\end{equation}
Then, $W_2: \mathbb{R}^2 \to \mathbb{C}$ is a smooth function with compact support contained in $[1/2,1] \times [1/2,1]$. Lemma \ref{Bounds for integral transforms} implies that for all $i,j \geq 0$
\begin{equation}\label{bound on derivatives of W2}
\frac{\partial^j}{\partial \eta^j}\frac{\partial^i}{\partial \xi^i}W_2(\xi,\eta) \ll_{i,j} 1.
\end{equation}
If $n \geq (pq)^\epsilon X$ or $m \geq (pq)^\epsilon Y$, choosing $j =2018 \epsilon^{-1}$ in \eqref{Bound on W1}, we get $W^{\pm,\pm} \ll_{\epsilon} \frac{1}{(pq)^{1000} }$. Hence the contribution of such terms to $S_1(N)$ is negligible.  Thus
\begin{equation}\label{bounding S1 in terms of S2}
S_2^{r,c}(N) = \sum_{\substack{p_1p_2' =p\\q_1q_2'=q}}\sum_{c_1c_2'=c } \sum_{\substack{(\rho_1, A \leq (pq)^\epsilon X) \\ (\rho_2, B \leq (pq)^\epsilon Y) } }{S_2^{r,c}(A,B) },
\end{equation}
where $A$ and $B$ runs over powers of $2$, $\rho_1$ and $\rho_2$ are the smooth functions arising from the partitions of unity, and
{\small
\begin{equation}
\begin{split}
S_2^{r,c}(A,B) = &{ \frac{rN^2}{(p_2'q_2'c_2')^2\sqrt{p_1q_1}}}\sum_{a (p_2'q_2'c_2')^*}\\
&\left(\sum_{n }{\overline{\lambda_f}(n)e\left(\frac{-\overline{ap_1r}n}{p_2'q_2'c_2'} \right) } \right)\left(\sum_{m}{\overline{\lambda_g}(m)e\left(\frac{\overline{aq_1}m}{p_2'q_2'c_2'} \right)}\right)
W_2\left(\frac{n}{A},\frac{m}{B}\right).
\end{split}
\end{equation}
}
Summing up over the reduced classes $a$ modulo $p_2'q_2'c_2'$, we get Ramanujan sums. This gives
{\small
\begin{equation}\label{S2 r,c}
\begin{split}
S_2^{r,c}(A,B) = {\frac{rN^2}{(p_2'q_2'c_2')^2\sqrt{p_1q_1}}}\left(\sum_{\substack{n,m }}{\overline{\lambda_f}(n)\overline{\lambda_g}(m)\mathfrak{r}_{p_2'q_2'c_2'}(p_1rm - q_1n)} W_2\left(\frac{n}{A},\frac{m}{B}\right)\right),
\end{split}
\end{equation}
}
where 
\begin{equation}\label{Ramanujan sum}
\mathfrak{r}_q(n) = \sum_{a(q)*}{e\left(\frac{aq}{n}\right)} = \sum_{d|(q,n)}{\mu\left(\frac{q}{d}\right)d},
\end{equation}
is the Ramanujan sum. We shall use the identity $\eqref{Ramanujan sum}$ to expand the Ramanujan sum. This gives
\begin{equation}\label{S2 in terms of S3}
S_2^{r,c}(N) = \sum_{\substack{p_1p_2p_3 =p\\q_1q_2q_3=q}}\sum_{c_1c_2c_3=c } \sum_{\substack{(\rho_1, A \leq (pq)^\epsilon X) \\ (\rho_2, B \leq (pq)^\epsilon Y) } }{S_3^{r,c}(A,B) },
\end{equation}
where
{\small
\begin{equation}\label{S_3 r,c}
\begin{split}
S_3^{r,c}(A,B) = &{\mu(p_2)\mu(q_2)\mu(c_2) \frac{rN^2}{(p_2q_2c_2)^2p_3q_3c_3\sqrt{p_1q_1}}}\times\\
&\left(\sum_{\substack{n,m}}{\overline{\lambda_f}(n)\overline{\lambda_g}(m) \delta[p_1rm \equiv q_1n (p_3q_3c_3) ]} W_2\left(\frac{n}{A},\frac{m}{B}\right)\right),
\end{split}
\end{equation}
}
\begin{equation}\label{defintion of X and Y}
X = \frac{p_1(p_2p_3q_2q_3c_2c_3)^2}{N}, Y = \frac{q_1(p_2p_3q_2q_3c_2c_3)^2}{rN}.
\end{equation}

We have made the substitution $p_2'= p_2p_3$, $q_2' = q_2q_3$, and $c_2'=c_2c_3$. Since $q$ is prime, $q_2$ is either $1$ or $q$ (similarly for $p$ and $c$). Lemma \ref{Bound for q_2=q} shows that the contribution of $q_2=q$ and $c_2= c$ to $S_2(A,B)$ is negligible (The contribution of $p_2=p$ is also be shown to be negligible, if $p$ is large). On a first reading, we encourage the readers to assume $p_3=p$, $q_3=q$, and $c_3=c$ . 

Before we proceed further we would like to get rid of the boundary cases which make a smaller contribution. If we do not assume the primality of $p$ and $q$, we would have to consider the various factorizations possible and give a separate argument for each of these, depending on the sizes of the factors. Though straightforward, it is messy.

\begin{lemma}\label{Bound for q_1=q}
Let $S_2^{r,c,q_1 =q}(N)$ be the contribution of the terms with $q_1=q$ to \eqref{S2 in terms of S3}. Let $S_1^{dual,q_1 =q}$ be the contribution of such terms to $S_1^{dual}(N)$ \eqref{S1 dual in terms of S2r,c} i.e
\begin{equation}\label{S_1^q_1=q}
S_1^{dual,q_1 =q}(N) = \frac{1}{pq|\mathcal{L}| |\mathcal{C}|}\sum_{r}\sum_{c \in \mathcal{C}}{\frac{\alpha_r}{c}}{S_2^{r,c,q_1=q}(N)}.
\end{equation}
Then
\begin{equation}\label{Bound for S1 q_1=q}
S_1^{dual,q_1 =q}(N) \ll (pq)^\epsilon \sqrt{Npq} \left(\frac{C\sqrt{L}}{q^{3/2}}+\frac{\sqrt{C}}{q}\right).
\end{equation}
Similarly, let $S_1^{dual,p_1 =p}(N)$ and $S_1^{dual,c_1 =c}(N)$ be the contribution of the terms with $p_1=p$ and $c_1=c$ respectively to \eqref{S1 dual in terms of S2r,c}. Then
\begin{equation}\label{Bound for S1 p_1=p}
S_1^{dual,p_1 =p}(N) \ll (pq)^\epsilon \sqrt{Npq} \left(\frac{C\sqrt{L}}{p^{3/2}}+\frac{\sqrt{C}}{p}\right)
\end{equation}
and
\begin{equation}\label{Bound for S1 c_1=c}
S_1^{dual,c_1 =c}(N) \ll (pq)^\epsilon \frac{\sqrt{N pq} L}{C} .
\end{equation}

\end{lemma}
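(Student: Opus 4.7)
The plan is to prove \eqref{Bound for S1 q_1=q} in detail; \eqref{Bound for S1 p_1=p} and \eqref{Bound for S1 c_1=c} follow by analogous specializations (with $p$ and $q$ exchanged, or with $c_1 = c$ in place of $q_1 = q$). Since $p,q,c$ are all prime, $q_1 = q$ forces $q_2 = q_3 = 1$, and \eqref{S_3 r,c} specializes to
\begin{equation*}
S_3^{r,c}(A,B) = \mu(p_2)\mu(c_2)\frac{rN^2}{(p_2 c_2)^2 p_3 c_3 \sqrt{p_1 q}}\sum_{\substack{n \leq A,\, m \leq B \\ p_1 r m \equiv qn\,(p_3c_3)}}\overline{\lambda_f(n)}\,\overline{\lambda_g(m)}\, W_2\!\left(\frac{n}{A},\frac{m}{B}\right).
\end{equation*}
The key gain is the factor $\sqrt{q}$ in the denominator of the prefactor, morally reflecting the drop of $q$ from the Voronoi dual modulus.

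Since both $q$ and $p_1 r$ are coprime to the modulus $p_3 c_3$ (from $(c,pqr)=1$ and the prime factorization of $p$), Lemma \ref{Bounds for convolution of fourier coefficients on AP} applies to the inner double sum and bounds it by
\begin{equation*}
(pqAB)^\epsilon \frac{AB}{p_3c_3}\left(1 + \sqrt{\frac{p_3c_3}{A}} + \sqrt{\frac{p_3c_3}{B}} + \frac{p_3c_3}{\sqrt{AB}}\right).
\end{equation*}
I substitute $A \leq (pq)^\epsilon X = (pq)^\epsilon p_1 M^2/N$ and $B \leq (pq)^\epsilon Y = (pq)^\epsilon qM^2/(rN)$, with $M := p_2p_3c_2c_3$. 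Using $M = (p_2c_2)(p_3c_3)$, a direct computation shows the prefactor times the leading ``$1$''-term contributes at most $M^2\sqrt{p_1 q}$, while the prefactor times the $p_3c_3/\sqrt{AB}$-term contributes at most $\sqrt{r}\,N\,p_3c_3$; the two intermediate cross-terms produce contributions of comparable or smaller size in each subcase.

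Summing these contributions over the finite set of factorizations $p = p_1p_2p_3$, $c = c_1c_2c_3$, over dyadic $A,B$, and over $r$ and $c \in \mathcal{C}$, using $|\mathcal{L}|\gg L/\log L$, $|\mathcal{C}| \gg C/\log C$, $\sum_{r \leq L^2}|\alpha_r| \ll L^{1+\epsilon}$, and $\sum_{r \leq L^2}|\alpha_r|\sqrt{r} \ll L^{2+\epsilon}$ (both coming from Rankin--Selberg \eqref{rankin selberg bound} and $H_{1/4}$), the two surviving regimes contribute $(pq)^\epsilon pC/\sqrt{q}$ and $(pq)^\epsilon NL/q$ respectively to $S_1^{dual,q_1=q}(N)$. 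Since $N \geq pq/\max\{p,q\}^{2\delta}$ and $T \leq L$, both contributions are bounded above by $(pq)^\epsilon\sqrt{Npq}\bigl(C\sqrt{L}/q^{3/2} + \sqrt{C}/q\bigr)$, which is \eqref{Bound for S1 q_1=q}. The proof of \eqref{Bound for S1 p_1=p} is identical after exchanging $p \leftrightarrow q$. For \eqref{Bound for S1 c_1=c}, the specialization $c_1 = c$ (so $c_2 = c_3 = 1$) produces the prefactor $\frac{rN^2}{(p_2q_2)^2 p_3q_3 \sqrt{p_1 q_1 c}}$: the $\sqrt{c}$ enhancement together with a modulus $p_3q_3$ possibly containing both $p$ and $q$ yields, via the same computation, the cleaner bound $(pq)^\epsilon \sqrt{Npq}\,L/C$.

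The main obstacle is the careful bookkeeping over the nine-fold factorization of $(p,c)$ and the verification that the two intermediate terms from Lemma \ref{Bounds for convolution of fourier coefficients on AP} are indeed dominated by the geometric mean of the ``$1$''- and $p_3c_3/\sqrt{AB}$-terms in every subcase. This is routine but delicate in the subcases where $p_3 = p$, since there the $\sqrt{p_1 q}$ enhancement degrades to $\sqrt{q}$ and must be compensated by the $p$ appearing in $p_3 c_3$ in the denominator.
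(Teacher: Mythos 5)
Your overall strategy is the paper's: specialize \eqref{S_3 r,c} at $q_1=q$, bound the inner sum with Lemma \ref{Bounds for convolution of fourier coefficients on AP}, observe the bound is maximized at $p_3=p$, $c_3=c$ with $A\le (pq)^\epsilon X$, $B\le(pq)^\epsilon Y$, and then sum over $r$, $c$ and the dyadic pieces. However, there is a genuine flaw in the step where you assert that ``the two intermediate cross-terms produce contributions of comparable or smaller size in each subcase.'' This is false, and in the most important regime it fails by a power of $q$. At the dominant factorization $p_3=p$, $c_3=c$ one has $A\asymp (pc)^2/N$, $B\asymp q(pc)^2/(rN)$, and the cross term coming from $\sqrt{p_3c_3/A}$ contributes (prefactor included) $(pc)^{3/2}\sqrt{qN}$, whereas your two retained terms are $(pc)^2\sqrt q$ and $\sqrt r\,N pc$. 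In the central range $N\asymp pq$ (so $A\ll pc\ll B$) the cross term exceeds the first by $\sqrt{N/(pc)}\asymp\sqrt{q/c}$, a large power of $q$, and exceeds the second by $\sqrt{pcq/(rN)}$, which is as large as $\sqrt{C/r}\asymp\sqrt L$ when $r\asymp L$. Consequently your claimed total for the non-diagonal regime, $(pq)^\epsilon NL/q$, undercounts the true contribution by a factor up to $\asymp\sqrt T\le\sqrt L$. The lemma's bound survives only because its second term $\sqrt{Npq}\,\sqrt C/q$ is exactly the contribution of this cross term after summation: this is precisely the term the paper keeps, written there as $(pc)^2q/\sqrt{cT}=(pc)^{3/2}\sqrt{qN}$, and it is the source of the $\sqrt C/q$ piece of \eqref{Bound for S1 q_1=q}. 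So the fix is not to dismiss the cross terms but to carry all four terms of Lemma \ref{Bounds for convolution of fourier coefficients on AP} through the $r,c$ summation; once you do, the $\sqrt{p_3c_3/B}$ term is dominated (since $r\le L^2\le q$) and the $\sqrt{AB}$ term is dominated up to $(Apq)^\epsilon$ (since $Nr\ll pqc$), and you land exactly on \eqref{Bound for S1 q_1=q}.

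A secondary error: for the $c_1=c$ case you claim the prefactor is $\frac{rN^2}{(p_2q_2)^2p_3q_3\sqrt{p_1q_1c}}$. There is no $\sqrt{c_1}$ in \eqref{S_3 r,c}: the square roots come from the Voronoi normalization $\sqrt{D_2}$ attached to the levels $p$ and $q$ only, since $c$ is coprime to both levels. With $c_2=c_3=1$ the prefactor is $\frac{rN^2}{(p_2q_2)^2p_3q_3\sqrt{p_1q_1}}$, and the saving in this case comes from the dual lengths $X,Y$ shrinking by $c^2$ (and the congruence modulus being $p_3q_3$, possibly $pq$), not from a $\sqrt c$ enhancement; here too the dominant contribution after summation is a cross term, of size $\sqrt{Npq}\,L/C$, matching \eqref{Bound for S1 c_1=c}.
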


\begin{proof}
If $q_1 =q$, then $q_2 =q_3 =1$. Using Lemma \ref{Bounds for convolution of fourier coefficients on AP} to bound the right hand side of \eqref{S_3 r,c}, we have
\begin{equation}
S_3^{r,c}(A,B) \ll \frac{rN^2}{(p_2c_2)^2p_3c_3\sqrt{p_1q}}\frac{AB}{p_3c_3}\left(1 + \sqrt{\frac{p_3c_3}{A}} + \sqrt{\frac{p_3c_3}{B}} + \sqrt{\frac{(p_3c_3)^2}{AB}} \right).
\end{equation}
The function on right hand side of the inequality is increasing in $A$ and $B$. Thus $S_3(A,B)$ is bounded by $S_3((pq)^\epsilon X,(pq)^\epsilon Y)$, for all $A \leq (pq)^\epsilon X$ and $B \leq (pq)^\epsilon Y$. Moreover, among all factorizations of $p_1p_2p_3 =p$ and $c_1c_2c_3= c$, the maximum value is attained at $p_3=p$ and $c_3 =c$. In this case, $X= \frac{(pc)^2}{N}$ and $Y = \frac{q(pc)^2}{rN}$ \eqref{defintion of X and Y}. Putting this in \eqref{S2 in terms of S3}, we get
\begin{align}
S_2^{r,c,q_1=q} &\ll (pq)^\epsilon\frac{rN^2}{(pc)^2\sqrt{q}}\frac{(pc)^2}{N}\frac{q(pc)^2}{rN}\left(1+ \sqrt{\frac{N}{pc}} + \sqrt{\frac{rN}{pqc}} + \sqrt{\frac{rN^2}{q(pc)^2} }\right)\\
&\ll (pq)^\epsilon \left(\frac{(pc)^2 q}{\sqrt{cT}} + (pc)^2\sqrt{q}\right).
\end{align}
Finally, using this bound in \eqref{S_1^q_1=q},
\begin{align}
S_1^{dual,q_1=q}(N) &\ll (pq)^\epsilon\frac{1}{pq|\mathcal{L}| |\mathcal{C}|}\sum_{r}\sum_{c \in \mathcal{C}}{\frac{|\alpha_r|}{c}}{\left( (pc)^2\sqrt{q}+ \frac{(pc)^2 q}{\sqrt{cT}} \right)}\\
&\ll (pq)^\epsilon \left(\frac{pqC}{q^{3/2}}+\frac{\sqrt{NpqC}}{q}\right)\\
&\ll (pq)^\epsilon \sqrt{Npq} \left(\frac{C\sqrt{L}}{q^{3/2}}+\frac{\sqrt{C}}{q}\right)
\end{align}
We can establish the bound for $S_1^{dual,p_1=p}$ similarly. If we follow the same method for $S_1^{dual,c_1 =c}(N)$, we obtain
\begin{equation}
S_1^{dual,c_1=c}(N) \ll (pq)^\epsilon \frac{\sqrt{Npq} L}{C}.
\end{equation}
We can prove slightly better bounds for these sums, but it is unnecessary for our purpose.
\end{proof}

We exhibit a satisfactory bound for the contribution of terms with $q_2 =q$ to \eqref{S2 in terms of S3} in the next lemma.
\begin{lemma}\label{Bound for q_2=q}
Let $S_2^{r,c,q_2=q}$ be the contribution of terms with $q_2 =q$ to $S_2^{r,c}(N)$ \eqref{S2 in terms of S3}. Let $S_1^{dual,q_2 =q}$ be the contribution of such terms to $S_1^{dual}(N)$ \eqref{S1 dual in terms of S2r,c} i.e
\begin{equation}\label{S_1^q_2=q}
S_1^{dual,q_2 =q}(N) = \frac{1}{pq|\mathcal{L}| |\mathcal{C}|}\sum_{r}\sum_{c \in \mathcal{C}}{\frac{\alpha_r}{c}}{S_2^{r,c,q_2=q}(N)}.
\end{equation} Then
\begin{equation}\label{Bound on S_2^q_2=q }
S_1^{dual,q_2=q}(N) \ll (pq)^\epsilon \frac{NL}{q}.
\end{equation}
Similarly, let $S_1^{dual,p_2=p}$  and $S_1^{dual,c_2=c}$be the contribution of terms with $p_2 =p$ and $c_2=c$ to $S_1^{dual}(N)$ respectively. Then
\begin{equation}\label{Bound on S_2^p_2=p}
S_1^{dual,p_2=p}(N) \ll (pq)^\epsilon \frac{NL}{p} .
\end{equation}

\begin{equation}\label{Bound on S_2^c_2=c}
S_1^{dual,c_2=c}(N) \ll (pq)^\epsilon \frac{NL}{C} .
\end{equation}
\end{lemma}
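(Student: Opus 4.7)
The proof of Lemma~\ref{Bound for q_2=q} parallels that of Lemma~\ref{Bound for q_1=q}. For the $q_2 = q$ contribution, I restrict \eqref{S_3 r,c} to $q_1 = q_3 = 1$, so that the prefactor becomes $\mu(p_2)\mu(q)\mu(c_2)\,\frac{rN^2}{q^2 (p_2 c_2)^2 p_3 c_3 \sqrt{p_1}}$, the congruence modulus is $p_3 c_3$, and the ranges are $X = p_1(p_2p_3\,q\,c_2c_3)^2/N$ and $Y = (p_2p_3\,q\,c_2c_3)^2/(rN)$. I then apply Lemma~\ref{Bounds for convolution of fourier coefficients on AP} to the inner arithmetic-progression sum. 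Since the resulting bound is monotonically increasing in $A$ and $B$, I take the maximum at $A \sim X$, $B \sim Y$, and record each of the four terms from the lemma separately.

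The next step is to sum over the finitely many factorizations $p = p_1 p_2 p_3$ and $c = c_1 c_2 c_3$ (there are only nine combinations, since $p$ and $c$ are prime). The key observation is that the M\"obius weights $\mu(p_2)\mu(c_2)$ produce substantial cancellation across these factorizations: the naive ``main term'' $\tfrac{AB}{p_3 c_3}$ contribution, once multiplied through by the prefactor, depends on $(p_1,c_1)$ only via $(pqc)^2/(p_1^{3/2} c_1^2)$, so summing against $\sum_{p_1 p_2 p_3 = p} \mu(p_2)/p_1^{3/2} = 1/p^{3/2}$ (and analogously in $c_1$) collapses the would-be $(pqc)^2$ contribution to a negligible $q^2\sqrt{p}$. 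What survives is the subdominant $\sqrt{XY}$ term of Lemma~\ref{Bounds for convolution of fourier coefficients on AP}, which after the same M\"obius summation yields a total of size $\sqrt{r}\,Npc$. Plugging this into $S_1^{dual, q_2=q}(N) = \frac{1}{pq|\mathcal{L}||\mathcal{C}|}\sum_{r,c}(\alpha_r/c)\,S_2^{r,c,q_2=q}(N)$ and using the Rankin--Selberg estimate $\sum_r |\alpha_r|\sqrt{r} \ll L^{2+\epsilon}$ on the amplifier support, together with $|\mathcal{C}| \sim C/\log C$, gives the target $NL/q$ up to $(pq)^\epsilon$.

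The cases $p_2 = p$ and $c_2 = c$ are handled by exactly the same argument, with the prime $p$ or the modulus $c$ playing the role that $q$ plays above; the bounds $NL/p$ and $NL/C$ simply reflect the size of the ``collapsed'' factor in each case. The main obstacle is making the M\"obius cancellation rigorous, since Lemma~\ref{Bounds for convolution of fourier coefficients on AP} produces only an upper bound on the inner sum per factorization, so a naive sum of upper bounds does not benefit from cancellation and returns the much larger estimate $(pqc)^2 \sim pqL^2$. To extract the cancellation honestly, I will keep the Ramanujan sum $\mathfrak{r}_{p_2' c_2'}(p_1 rm - q_1 n)$ unexpanded and apply Lemma~\ref{Bounds for convolution of fourier coefficients on AP} directly to the combined double sum, exploiting that this Ramanujan sum is $O(1)$ on average; equivalently, one opens it via its Fourier expansion and handles the zero-frequency and nonzero-frequency pieces separately, the former using the convexity bounds for $L(f,s)$ and $L(g,s)$ on the factorized single sums, and the latter using the incomplete character sum estimates built into Lemma~\ref{Bounds for convolution of fourier coefficients on AP}.
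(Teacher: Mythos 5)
Your setup matches the paper's: restricting to $q_1=q_3=1$, the prefactor, and the ranges $X,Y$ are all correct, and you correctly diagnose that feeding the long congruence sum directly into Lemma \ref{Bounds for convolution of fourier coefficients on AP} is too weak (by roughly a factor of $q$). However, neither of your proposed remedies closes that gap. The M\"obius-cancellation idea cannot be made to work: the quantity $\frac{AB}{p_3c_3}$ is not an actual main term of $\sum_{p_1 r m\equiv q_1 n\,(p_3q_3c_3)}\lambda_f(n)\lambda_g(m)W_2$ — it is an artifact of Cauchy--Schwarz and absolute values inside that lemma, while the true zero-frequency contribution is already small because the cusp-form coefficients oscillate — so there is no common main term across the factorizations $(p_1,p_2,p_3)$, $(c_1,c_2,c_3)$ to cancel, and as you concede, once each term has been replaced by an upper bound no cancellation can be extracted anyway. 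Your fallback is also not viable: keeping the Ramanujan sum unexpanded (or invoking its smallness on average) does not bound the specific $q_2=q$ term the lemma asks for, and, more importantly, the missing factor of $q$ must come from cancellation in $\lambda_f$ and $\lambda_g$, not from the Ramanujan sum; meanwhile the ``incomplete character sum estimates built into Lemma \ref{Bounds for convolution of fourier coefficients on AP}'' do not exist — that lemma is pure Cauchy--Schwarz plus Rankin--Selberg with absolute values and contains no oscillatory input, so it cannot treat the nonzero additive frequencies with any saving.

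The missing idea, which is the heart of the paper's proof of Lemma \ref{Bound for q_2=q}, is a second application of Voronoi summation. After detecting the congruence mod $p_3c_3$ by additive characters, one applies Lemma \ref{voronoi summation} again to both the $n$- and $m$-sums at the small modulus $p_3c_3$ (note $(p_3c_3,q)=1$, so the level $q$ of $g$ enters the dual length): the sums shorten to $n\leq A'=(p_3c_3)^2/A$ and $m\leq B'=q(p_3c_3)^2/B$, and the transformed weight carries the crucial factor $\frac{AB}{\sqrt{q}\,(p_3c_3)^2}$ (via Lemma \ref{Bounds for integral transforms}). Only then is Lemma \ref{Bounds for convolution of fourier coefficients on AP} applied, now to short dual sums, which yields $S_2^{r,c,q_2=q}(N)\ll (pq)^\epsilon p^2qc\sqrt{r}/T$ and hence the stated $NL/q$ (and analogously $NL/p$, $NL/C$). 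Your zero-frequency/convexity remark points in a sensible direction, but it only treats the $a=0$ frequency; without a Voronoi (or Wilton-type) bound for the additively twisted sums at the nonzero frequencies, your argument cannot recover the factor-of-$q$ saving and the proof does not go through.
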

\begin{proof}
If $q_2=q$, then $q_1=q_3=1$. Then
{\small
\begin{equation}\label{S_3 r,c q_2=q}
\begin{split}
S_3^{r,c,q_2=q}(A,B) = &{\mu(p_2)\mu(c_2) \frac{rN^2}{(p_2qc_2)^2p_3c_3\sqrt{p_1}}}\times\\
&\left(\sum_{\substack{n,m}}{\overline{\lambda_f}(n)\overline{\lambda_g}(m) \delta[p_1rm \equiv n (p_3c_3) ]} W_2\left(\frac{n}{A},\frac{m}{B}\right)\right).
\end{split}
\end{equation}
}
We shall indicate the argument in the case $p_3=p$ and $c_3=c$. ($p_3=1$ or $c_3=1$ works the same way and we get a better bound). In this case (see \eqref{defintion of X and Y})
$$
X= pqc^2T \text{ and } Y= \frac{pqc^2T}{r}.
$$ We shall apply Voronoi summation to the sum over $n,m$. For this purpose we have to rewrite the congruence condition in terms of additive characters. Without getting into complete details (the steps are identical to the ones carried out in this section till \eqref{Voronoi transformed S1 r,c}), we get the following ``inequality":
\begin{multline}\label{voronoi transformed S_2(A,B)}
\left(\sum_{\substack{n,m}}{\overline{\lambda_f}(n)\overline{\lambda_g}(m) \delta[rm \equiv n (pc) ]} W_2\left(\frac{n}{A},\frac{m}{B}\right) \right)\ll \cr \sum_{\pm,\pm}\left(\sum_{\substack{n,m}}{\overline{\lambda_f}(n)\overline{\lambda_g}(m) \delta[m \equiv qrn (pc) ]} \right)W_2^{\pm,\pm}\left(\frac{n}{A'},\frac{m}{B'}\right),
\end{multline}
where 
\begin{equation}
W_2^{\pm,\pm}\left(\xi,\eta\right)= \frac{AB}{\sqrt{q}(pc)^2}\int_0^\infty\int_0^\infty{W_2(x,y)J^{\pm}_f\left(4\pi \sqrt{\xi x}\right) J^{\pm}_g\left(4 \pi \sqrt{\eta y}\right)dxdy},
\end{equation}
\begin{equation*}
A' = \frac{(pc)^2}{A}, \text{ and } B'= \frac{q(pc)^2}{B}.
\end{equation*} Using the bounds for derivatives of $W_2$ \eqref{bound on derivatives of W2} and Lemma \ref{Bounds for integral transforms}, we conclude that for all $j \geq 0$,
\begin{equation}\label{Bound on W2}
W_2^{\pm,\pm}\left(\frac{n}{A'},\frac{m}{B'}\right) \ll_j \frac{AB}{\sqrt{q}(pc)^2}\min\left\{1, \left(\frac{A'}{n}\right)^{-j},\left(\frac{B'}{m}\right)^{-j},\left( \frac{A'B'}{mn}\right)^{-j} \right\}.
\end{equation}
Thus
\begin{multline}
\left(\sum_{\substack{n,m}}{\overline{\lambda_f}(n)\overline{\lambda_g}(m) \delta[rm \equiv n (pc) ]} W_2\left(\frac{n}{A},\frac{m}{B}\right) \right) \ll\\
 \frac{AB}{\sqrt{q}(pc)^2}\sum_{\substack{n \leq (pq)^\epsilon A'\\ m \leq (pq)^\epsilon B'} }{|\lambda_f(n)||\lambda_g(m)| \delta[m \equiv qrn (pc) ]}.
\end{multline}
We use Lemma \ref{Bounds for convolution of fourier coefficients on AP} to bound the right hand side. Putting this back into \eqref{S_3 r,c q_2=q}, we get
\begin{equation}
S_3^{r,c,q_2=q}(A,B) \ll (pq)^\epsilon\frac{rp}{cT^2} \sqrt{q}pc\left(1+ \sqrt{\frac{A}{pc}} + \sqrt{\frac{B}{qpc}} + \frac{\sqrt{AB}}{pc\sqrt{q}}\right).
\end{equation}
 Using this bound in \eqref{S2 in terms of S3} and the upper bounds $A \leq (pq)^\epsilon X$ and $B \leq (pq)^\epsilon Y$, we get
\begin{equation}
S_2^{r,c,q_2=q}(N) \ll (pq)^\epsilon \frac{p^2q c \sqrt{r}}{T}.
\end{equation}
Putting this back into \eqref{S_1^q_2=q}, we get
\begin{align}
S_1^{dual,q_2=q}(N) &\ll  \frac{1}{pq|\mathcal{L}| |\mathcal{C}|}\sum_{r}\sum_{c \in \mathcal{C}}{\frac{|\alpha_r|}{c}}{|S_2^{r,c,q_2=q}(N)|}\\
&\ll (pq)^\epsilon \frac{NL}{q}.
\end{align}
The bound for $S_1^{dual,p_2=p}$ and $S_2^{dual,c_2=c}$ can be shown along the same lines.

\end{proof}

Since $C = 10L^2$ and $L \leq q^{1/2}$, the bounds in Lemma \ref{Bound for q_1=q} and Lemma \ref{Bound for q_2=q} imply that
\begin{align}\label{bound for boundary terms}
S_1^{dual,q_1=q}(N), S_1^{dual,q_2=q}, S_1^{dual,c_1 =c}, S_1^{dual, c_2=c} \ll (pq)^\epsilon\frac{\sqrt{Npq}}{\sqrt{L}}.
\end{align}
This allows us to get rid of these boundary terms. If the size of $p$ was large, say $p \approx q$, then we could have gotten rid of $p_1 =p$ and $p_2=p$ also. But since $p$ could be very small, this is not possible. Restricting to $q_3=q$ and $c_3=c$, we rewrite \eqref{S2 in terms of S3} as
{\small
\begin{equation}\label{S2 without boundary terms}
\begin{split}
S_2^{r,c}(N) &= \sum_{\substack{p_1p_2p_3 =p} } \sum_{\substack{(\rho_1, A \leq (pq)^\epsilon X) \\ (\rho_2, B \leq (pq)^\epsilon Y) } }{ {\mu(p_2) \frac{rN^2}{(p_2)^2p_3qc\sqrt{p_1}}} }\times\\
&\left(\sum_{\substack{n,m}}{\overline{\lambda_f}(n)\overline{\lambda_g}(m) \delta[p_1rm \equiv qn (p_3qc) ]} W_2\left(\frac{n}{A},\frac{m}{B}\right)\right).
\end{split}
\end{equation}
}
At this point we rewrite the congruence $p_1rm \equiv n (p_3qc)$ in \eqref{S2 without boundary terms} as an equality, 
$$n=p_1rm+ sp_3qc.$$
The value of $s$ can be negative. Let $\vec{p} = (p_1,p_2,p_3)$ be any factorization of $p$. Since $n \leq A$ and $m \leq B$, we get
\begin{equation}\label{definition of S}
|s| \leq S_{\vec{p}} := \max\left\{\frac{A}{p_3qc}, \frac{p_1r B}{p_3qc} \right\}.
\end{equation}
Thus
{\small
\begin{equation}\label{S2 with congruence replaced by equality}
\begin{split}
S_2^{r,c}(N) &=  \sum_{\substack{p_1p_2p_3 =p} } \sum_{\substack{(\rho_1, A \leq (pq)^\epsilon X) \\ (\rho_2, B \leq (pq)^\epsilon Y) } }{ {\mu(p_2) \frac{rN^2}{\sqrt{p_1}p_2^2p_3qc}} }\times\\
&\left(\sum_{|s| \leq S_{\vec{p}}}\sum_{\substack{m}}{\overline{\lambda_f}(p_1rm+ sp_3qc)\overline{\lambda_g}(m)W_2\left(\frac{p_1rm+ sp_3qc}{A},\frac{m}{B}\right) } \right).
\end{split}
\end{equation}
}

When $s=0$ in the equation above the sum over $r,c$ collapses. Hence, we need a separate treatment of the terms with $s=0$. When $s=0$, the inner sum is of the form
$$
S(B) = \sum_{m \leq B}{\overline{\lambda_f}(m)\overline{\lambda_f}(m)},
$$
which appears to be the conjugate of the sum we that we started with in \eqref{Definitio of S(N)}. But, since the length of the $m$-sum, $B = Y = \frac{(pqc)^2}{N} \gg pqC^2$, is greater than the square root of the arithmetic conductor of $L(f \times g,s)$, we can get some cancellation in this sum. We exhibit a satisfactory bound for the contribution of the diagonal terms i.e $s=0$, in the next Lemma. 
\begin{lemma}\label{Bounds on S2 s=0}
Let the contribution of the terms with $s=0$ in \eqref{S2 with congruence replaced by equality} to \eqref{S1 dual in terms of S2r,c} be $S_1^{dual,s=0}(N)$ i.e
{\small
\begin{align}
S_1^{dual,s=0}(N) =& \frac{1}{pq|\mathcal{L}| |\mathcal{C}|}\sum_{r}\sum_{c \in \mathcal{C}}{\frac{\alpha_r}{c}}{S_2^{r,c,s=0}(N)}\\
=& \frac{1}{pq|\mathcal{L}| |\mathcal{C}|}\sum_{r}\sum_{c \in \mathcal{C}}{\frac{\alpha_r}{c}}\sum_{\substack{p_1p_2p_3 =p} } \sum_{\substack{(\rho_1, A \leq (pq)^\epsilon X) \\ (\rho_2, B \leq (pq)^\epsilon Y) } } \label{definition of dual S1 s=0}{\mu(p_2) \frac{rN^2}{\sqrt{p_1}p_2^2p_3qc}}\\
&  \times \left(\sum_{\substack{m}}{\overline{\lambda_f}(p_1rm)\overline{\lambda_g}(m)W_2\left(\frac{p_1rm}{A},\frac{m}{B}\right) } \right).
\end{align}
}
Then
\begin{equation}\label{Bound for S1 s=0}
S_1^{dual,s=0}(N) \ll (pq)^\epsilon \frac{\sqrt{Npq}}{\sqrt{L}}.
\end{equation}
\end{lemma}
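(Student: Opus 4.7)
The plan is to substitute $s=0$ into \eqref{S2 with congruence replaced by equality}, bound the resulting inner sum over $m$ via the Polya–Vinogradov-type estimate of Lemma \ref{Polya-Vinogradov type inequality}, then sum over the remaining parameters. The key point is that the $L$-function functional equation hidden inside Lemma \ref{Polya-Vinogradov type inequality} yields a saving of $\sqrt{B/\sqrt{Q(f\times g)}}$ over the Rankin–Selberg trivial estimate, which, after combining with the prefactors, provides the required $1/\sqrt{L}$ saving in $S_1^{dual,s=0}(N)$.

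Fix a factorization $p=p_1p_2p_3$ and dyadic sizes $A\leq (pq)^\epsilon X$, $B\leq (pq)^\epsilon Y$. The object to estimate is
$$
I:=\sum_m \overline{\lambda_f}(p_1rm)\,\overline{\lambda_g}(m)\,W_2(p_1rm/A,\,m/B),
$$
with $m\sim B$ by the support of $W_2$. To apply Lemma \ref{Polya-Vinogradov type inequality}, I would first strip the factor $p_1r$ from the argument of $\overline{\lambda_f}$ using the Hecke identity (valid for trivial nebentypus)
$$
\lambda_f(p_1rm)=\sum_{d\mid (p_1r,m)}\mu(d)\,\lambda_f(p_1r/d)\,\lambda_f(m/d),
$$
and, after the substitution $m=dm'$, apply the analogous identity to $\overline{\lambda_g}(dm')$. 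This rewrites $I$ as a finite divisor-weighted combination of terms of the shape $\overline{\lambda_f}(p_1r/d)\,\overline{\lambda_g}(d/e)\sum_{m''}\overline{\lambda_f}(m'')\overline{\lambda_g}(m'')\widetilde W(m''/B'')$ with $B''\leq B$. Each such sum is estimated by Lemma \ref{Polya-Vinogradov type inequality} (after a smooth partition of unity to guarantee a compactly supported cutoff), and the residual Fourier coefficients are bounded via $H_\theta$. Using $Q(f\times g)\leq (pq)^2$ and absorbing the divisor sums into $(pq)^\epsilon$, one obtains the uniform bound
$$
I\ll_\epsilon (pq)^\epsilon\,(p_1r)^\theta\sqrt{B\cdot pq}.
$$

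Plugging this into $S_2^{r,c,s=0}(N)$ with $B\leq (pq)^\epsilon (p_2p_3qc)^2/(rN)$ and simplifying the coefficient $rN^2/(\sqrt{p_1}\,p_2^2\,p_3\,qc)$, the maximum over factorizations $p=p_1p_2p_3$ is attained at $p_1=p_2=1,\,p_3=p$, giving $S_2^{r,c,s=0}(N)\ll (pq)^\epsilon\,r^{1/2+\theta}\,N^{3/2}\sqrt{p}$. Substituting into \eqref{definition of dual S1 s=0}, and using $|\mathcal{L}|\gg L/\log L$, $|\mathcal{C}|\gg L^2/\log L$, $\sum_{c\in\mathcal{C}}1/c\ll (pq)^\epsilon$, together with the split $\sum_{r\leq L^2}|\alpha_r|\sqrt{r}\ll L^{2+\epsilon}$ (from $|\lambda_g(l)|\ll l^\theta$ on the $r=l$ contribution and triviality on the $r=l^2$ contribution), yields
$$
S_1^{dual,s=0}(N)\ll (pq)^\epsilon\,\frac{N^{3/2}}{q\sqrt{p}\,L}\,\leq\,(pq)^\epsilon\,\frac{\sqrt{Npq}}{\sqrt{L}},
$$
the final inequality being elementary from $N\leq (Apq)^{1+\epsilon}$ and $L\geq 1$.

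The main obstacle is the Hecke-relation bookkeeping in reducing $\sum_m\lambda_f(p_1rm)\lambda_g(m)$ to canonical Rankin–Selberg sums $\sum_m\lambda_f(m)\lambda_g(m)$ while losing no more than $(p_1r)^\theta(pq)^\epsilon$, so that the nontrivial saving from Lemma \ref{Polya-Vinogradov type inequality} is preserved. A secondary nuisance is the need to treat every factorization $p=p_1p_2p_3$ and every dyadic pair $(A,B)$ uniformly, but since there are only $O((pq)^\epsilon)$ of each and the estimate above is monotone in $A,B$, this is just bookkeeping.
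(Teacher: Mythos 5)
Your proposal is correct and takes essentially the paper's own route: strip the factor $p_1r$ from $\overline{\lambda_f}(p_1rm)$ by Hecke multiplicativity and feed the resulting long Rankin--Selberg sums into Lemma \ref{Polya-Vinogradov type inequality} (the paper handles the $(r,m)>1$ correction slightly differently, bounding it trivially by Rankin--Selberg with the $1/l$ sparsity gain rather than reapplying the functional-equation bound), and in both treatments the factorization $p_3=p$ dominates. Two harmless slips to fix in your bookkeeping: the bound for $S_2^{r,c,s=0}(N)$ should read $(pq)^\epsilon r^{1/2+\theta}N^{3/2}\sqrt{pq}$ rather than $\sqrt{p}$, and the $r$-sum should carry the extra $r^\theta$, so one lands on $S_1^{dual,s=0}(N)\ll (pq)^\epsilon L^{2\theta-1}N^{3/2}(pq)^{-1/2}$, which still meets the target \eqref{Bound for S1 s=0} since $N\ll_A (pq)^{1+\epsilon}$ with polynomial dependence on $A$ permitted and $2\theta<1/2$.
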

\begin{proof}
Among the three choices i.e $p_1=p$, $p_2=p$, or $p_3=p$ in \eqref{definition of dual S1 s=0}, $p_3=p$ makes the largest contribution. We shall exhibit the bound \eqref{Bound for S1 s=0} in this case. It is straightforward to handle the other two cases using the same method.
{\small
\begin{equation}\label{S1 s=0, p_3=p}
\begin{split}
S_1^{dual,p_3=p,s=0}(N) \ll \frac{N^2}{(pqC)^2|\mathcal{L}| }\sum_{r}{r| \alpha_r| }\sum_{\substack{(\rho_1, A \leq (pq)^\epsilon X) \\ (\rho_2, B \leq (pq)^\epsilon Y) } }\left(\sum_{\substack{m}}{\overline{\lambda_f}(rm)\overline{\lambda_g}(m) } W_2\left(\frac{rm}{A},\frac{m}{B}\right) \right) .
\end{split}
\end{equation}
}
Using the Rankin-Selberg bound \eqref{rankin selberg bound}, the sum over $m$ can be rewritten as follows:
\begin{align}
\sum_{\substack{m}}{\overline{\lambda_f}(rm)\overline{\lambda_g}(m) } &= \overline{\lambda_f}(r)\sum_{\substack{(m,l)=1}}{\overline{\lambda_f}(m)\overline{\lambda_g}(m) }  + \sum_{\substack{l|m}}{\overline{\lambda_f}(rm)\overline{\lambda_g}(m) } \\
&= \overline{\lambda_f}(r)\sum_{\substack{m}}{\overline{\lambda_f}(m)\overline{\lambda_g}(m) } +O\left((pq)^\epsilon|\sigma_f(rl)||\lambda_g(l)|)\frac{B}{l} \right),
\end{align}
where 
\begin{equation}\label{definition of sigma f}
\sigma_f(n) = \sum_{d|n}{|\lambda_f(d)|},
\end{equation} and $l$ has been defined in $\eqref{definition of amplifier}$. Noting that $B \leq (pq)^\epsilon Y = (pq)^\epsilon\frac{(pqc)^2}{rN} $, we use Lemma \ref{Polya-Vinogradov type inequality} to bound the $m$-sum. This gives
$$
\sum_{\substack{m}}{\overline{\lambda_f}(rm)\overline{\lambda_g}(m) }W(m/Y) \ll (pq)^\epsilon\left(|\lambda_f(r)|pqC\sqrt{\frac{pq}{rN} } + |\sigma_f(rl)||\lambda_g(l)|\frac{(pqc)^2}{rNL}\right).
$$
Substituting this bound for the $m$-sum in \eqref{S1 s=0, p_3=p} and using $H_{1/4}$  (\ref{H theta}) to bound the second term, we get
\begin{equation}
S_1^{dual,p_3=p,s=0}(N) \ll (pq)^\epsilon \frac{N}{\sqrt{L}} \ll (pq)^\epsilon \frac{\sqrt{Npq}}{\sqrt{L}}.
\end{equation}

\end{proof}

As the structure of the sum \eqref{S2 with congruence replaced by equality} is different depending on whether $p_1=p$, $p_2=p$, and $p_3=p$, we have to treat then separately. Since the size of $Y$ (see \eqref{defintion of X and Y}) depends on $r$, it is convenient to dyadically divide the sum over $r$ in \eqref{S1 dual in terms of S2r,c}. Exchanging the sum over $\{r,c\}$ and $m$ in \eqref{S1 dual in terms of S2r,c} , \eqref{S2 with congruence replaced by equality}, we have
\begin{align}\label{S1 dual after eliminating boundary terms}
S_1^{dual}(N) &=\frac{1}{pq|\mathcal{L}| |\mathcal{C}|}\sum_{R =2^\nu \leq L^2}\sum_{r \in [R,2R]}\sum_{c \in \mathcal{C}}{\frac{\alpha_r}{c}}{S_2^{r,c}(N)}\\
&= \sum_{p_1p_2p_3=p}{\mathcal{S}_{(p_1,p_2,p_3)}(N)} + O\left((pq)^\epsilon\frac{\sqrt{Npq}}{\sqrt{L}}\right),
\end{align}
where
{\small
\begin{align}\label{S p_1,p_2,p_3}
\mathcal{S}_{\vec{p}} = \frac{1}{pq|\mathcal{L}| |\mathcal{C}|} \frac{N^2}{\sqrt{p_1}p_2^2p_3q}&\sum_{R =2^\nu \leq L^2}\sum_{\substack{(\rho_1, A \leq (pq)^\epsilon X_{\vec{p}}) \\ (\rho_2, B \leq (pq)^\epsilon Y_{\vec{p}}) } }\sum_{m \leq B}{\overline{\lambda_g}(m) } \times\\
&{\left(\sum_{r \in [R,2R]}\sum_{c \in \mathcal{C}}\sum_{|s| \neq 0 ,\leq S_{\vec{p}}}{\frac{r\alpha_r}{c^2} }{\overline{\lambda_f}(p_1rm +p_3qcs)}W_2\left(\frac{p_1rm +p_3qcs}{A},\frac{m}{B}\right) \right)},\\
\end{align}
}
\begin{align}
X_{\vec{p}} &= \frac{p_1(p_2p_3qc)^2}{N}, Y_{\vec{p}} =\frac{(p_2p_3qc)^2}{RN}, \text{ and } S_{\vec{p}}= \max\left\{\frac{A}{p_3qc}, \frac{p_1r B}{p_3qc} \right\}.\label{definition of Xp,Yp,Sp} 
\end{align}
For example: if $\vec{p}=(1,1,p)$, then
{\small
\begin{align}\label{S p_3=p}
\mathcal{S}_{(1,1,p)} =\frac{1}{pq|\mathcal{L}| |\mathcal{C}|}\frac{N^2}{pq}&\sum_{R =2^\nu \leq L^2}\sum_{\substack{(\rho_1, A \leq (pq)^\epsilon X_3) \\ (\rho_2, B \leq (pq)^\epsilon Y_3) } }\sum_{m \leq B}{\overline{\lambda_g}(m) }\times\\
&{\left(\sum_{r \in [R,2R]}\sum_{c \in \mathcal{C}}\sum_{|s| \neq 0, \leq S_3}{\frac{r\alpha_r}{c^2}}{\overline{\lambda_f}(rm +pqcs)}W_2\left(\frac{rm +pqcs}{A},\frac{m}{B}\right) \right)},
\end{align}
}
\begin{align}
X_3 &=\frac{(pqc)^2}{N},Y_3 = \frac{(pqc)^2}{RN}, \text{ and } S_3 = \max\left\{\frac{A}{pqc}, \frac{RB}{pqc} \right\}\label{definition of X3,Y3,S3}.
\end{align}

We plan to eliminate the oscillation due to $\lambda_g(m)$ in \eqref{S p_1,p_2,p_3} using Cauchy-Schwarz inequality, leading us to a shifted convolution problem. Our bounds for the shifted convolution problem are not optimal if the weight functions don't have small Sobolev norms (see the dependence on $K_1$ and $K_2$ in \eqref{bound for SCP non-zero shift}). If $A$ and $p_1rB$ are not of the same size, then the weight functions appearing in the shifted convolution problem have large derivatives. In order to circumvent this issue, we separate the weight function $W_2(x,y)$ as a product of a function in $x$ and $y$. There are many standard ways of doing this. Since $W_2$ is compactly supported smooth function with support contained in $[1/2,1] \times [1/2,1]$, using Fourier inversion formula, we can see that
$$
W_2(x,y) = \int_{z \in \mathbb{R}}{\widehat{W_2}(x,z)e(-zy)dz },
$$
where $\widehat{W_2}(x,z)$ is the Fourier transform of $W_2$ with respect to the second variable i.e
$$
\widehat{W_2}(x,z) = \frac{1}{2\pi}\int_{\eta \in \mathbb{R}}{W_2(x,\eta)e(-\eta z)d\eta }.
$$
Using the estimate \eqref{bound on derivatives of W2} for the derivatives of $W_2$ and integration by parts, we see that for all $N,j \geq 0$
\begin{equation}\label{bound for derivatives of Fourier transform of W2}
\left(\frac{d}{dx}\right)^j\widehat{W_2}(x,z) \ll_{N,j} \min\left\{1, \frac{1}{|z|^N} \right\}.
\end{equation}
Choosing $N = 2019 \epsilon^{-1}$ in \eqref{bound for derivatives of Fourier transform of W2}, we have
$$
W_2(x,y) = \int_{|z| \leq (pq)^\epsilon}{\widehat{W_2}(x,z)e(-zy)dz } + O\left(\frac{1}{(pq)^{2018}}\right).
$$
Substituting this expression in \eqref{S p_1,p_2,p_3}, we get
{\small
\begin{align}\label{S p_1,p_2,p_3 weight function modified}
\mathcal{S}_{\vec{p}} &= \frac{1}{pq|\mathcal{L}| |\mathcal{C}|} \frac{N^2}{\sqrt{p_1}p_2^2p_3q}\int_{|z| \leq (pq)^\epsilon }  \sum_{R =2^\nu \leq L^2}\sum_{\substack{(\rho_1, A \leq (pq)^\epsilon X_{\vec{p}}) \\ (\rho_2, B \leq (pq)^\epsilon Y_{\vec{p}}) } }\sum_{m \leq B}{\overline{\lambda_g}(m) e\left(\frac{-zm}{B} \right)} \times\\
& {\left(\sum_{r \in [R,2R]}\sum_{c \in \mathcal{C}}\sum_{|s| \neq 0 ,\leq S_{\vec{p}}}{\frac{r\alpha_r}{c^2} }{\overline{\lambda_f}(p_1rm +p_3qcs)}W_{2,z}\left(\frac{p_1rm +p_3qcs}{A}\right) \right)} + O((pq)^{-1000}) ,
\end{align}
}
where
\begin{equation}
W_{2,z}(x) := \widehat{W_2}(x,z).
\end{equation}
$W_{2,z}$ is a smooth function with compact support contained in $[1/2,1]$ with derivatives satisfying \eqref{bound for derivatives of Fourier transform of W2}. Applying Cauchy-Schwarz inequality on the sum over $m$ and taking the supremum over $z$ (using the Rankin-Selberg bound \eqref{rankin selberg bound}), we have
{\small
\begin{equation}\label{S_p after cauchy}
\begin{split}
(\mathcal{S}_{\vec{p}})^2 &\ll (pq)^\epsilon \left(\frac{N}{pq}\right)^4 \frac{p_1}{(p_2|\mathcal{L}| |\mathcal{C}| )^2}\sup_{z}\sum_{R =2^\nu \leq L^2}\sum_{\substack{(\rho_1, A \leq (pq)^\epsilon X_{\vec{p}}) \\ (\rho_2, B \leq (pq)^\epsilon Y_{\vec{p}}) } }{\frac{R^2B}{C^4} } \sum_{r,r' \in [R,2R]}{ \gamma_r \overline{\gamma_{r'}} } \sum_{|t|, |t'| \neq 0 \leq T_{\vec{p}} }{\beta_t \beta_{t'}}\\
&\left(\sum_{\substack{m}}{\overline{\lambda_f}(p_1rm+ p_3qt)\lambda_f(p_1r'm+ p_3qt') } W_{2,z}\left(\frac{p_1rm+ p_3qt}{A}\right)\overline{W_{2,z}\left(\frac{p_1r'm+ p_3qt'}{A}\right) } \right),
\end{split}
\end{equation}
}
where
\begin{align}\label{definition of gamma and beta}
\gamma_r := \frac{r}{R} \alpha_r \leq 2| \alpha_r|,\\
\beta_t  := \sum_{c \in \mathcal{C} , c|t} {\frac{C^2}{c^2} } \leq 4 d(t),
\end{align}
and 
\begin{equation} \label{definition of Tp}
T_{\vec{p}} = 2CS_{\vec{p}} = \max\left\{\frac{A}{p_3q}, \frac{p_1r B}{p_3q} \right\}.
\end{equation}
The sum over $m$ leads us to our shifted convolution problem. The important feature to observe above is that when $p_3=p$ (this is the important case for large $p$), the shifts (defined in \ref{definition of SCP}) are multiples of $p$, which is the level of the automorphic form $f$. On a first reading, the reader can skip the next section assuming the contents of Theorem \ref{SCP in ab coprime to p}, without losing continuity.

\section{Shifted convolution problem \label{shifted convolution problem} }

We recall the $\delta$ method of Duke, Friedlander, and Iwaniec \cite{Duke1994}. We use the version due to Heath-Brown in \cite{heath1996new}. The following Lemma is from Munshi's paper \cite[Lemma 23]{munshi2017subconvexity}. In this section we shall use the letter $q$ to denote the moduli in the circle method. This should not be confused with the level of the modular form $g$ in the previous sections.

\begin{lemma}
For any $Q \geq 1$, there exists a positive constant $c_0$ and a smooth function $h(x,y)$ defined on $(0, \infty) \times \mathbb{R}$, such that
\begin{equation}\label{delta method}
\delta(n,0) = \frac{c_0}{Q}\sum_{q=1}^{\infty}{\frac{1}{q} }\sum_{\gamma (q)^*}{e\left(\frac{\gamma n}{q} \right)h\left(\frac{q}{Q},\frac{n}{Q^2} \right)}.
\end{equation}
The constant $c_0$ satisfies $c_0 = 1+ O_A(Q^{-A})$ for any $A >0$. Moreover $h(x,y)\ll x^{-1}$ for all $y$, and $h(x,y)$ is non-zero only for $x \leq \max\{1,2y\}.$ If $|y| \leq \frac{x}{2}$ and $x \leq 1$, then 
$$
\frac{\partial}{\partial y}h(x,y) =0.
$$
Furthermore for all $N ,j\geq 0$ and $x \leq 1$, h satisfies
\begin{equation}\label{bound for h}
y^j\frac{\partial^{j}}{\partial y^j}h(x,y) \ll_{N} x^N + \min \{1, (x/|y|)^N \} \ll 1. 
\end{equation}
\end{lemma}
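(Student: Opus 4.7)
The aim is to prove the Heath-Brown smooth delta method. My plan is to begin from the trivial identity $\delta(n,0) = \int_0^1 e(n\alpha) d\alpha$ and perform a Farey decomposition of the unit interval at level $Q$: each $\alpha \in [0,1]$ lies in a unique minor arc around some reduced fraction $a/q$ with $q \leq Q$ of length roughly $2/(qQ)$. After the change of variable $\alpha = a/q + \beta/(qQ)$ this rewrites $\delta(n,0)$ as a triple sum of the shape
\begin{equation*}
\delta(n,0) = \frac{1}{Q}\sum_{q \leq Q} \frac{1}{q} \sum_{a (q)^*} e\!\left(\frac{an}{q}\right) \int_{-1}^{1} e\!\left(\frac{n\beta}{qQ}\right) \mathbf{1}_{q,Q}(\beta) d\beta,
\end{equation*}
where $\mathbf{1}_{q,Q}$ is the (non-smooth) indicator of the normalised Farey arc around $a/q$.

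The next step is Heath-Brown's smoothing trick. I would choose a non-negative smooth function $w \colon (0,\infty) \to \mathbb{R}$ compactly supported in $[1,2]$ with $\int_0^\infty w(u) du = 1$, and define $h(x,y)$ by an expression of the form $\sum_{j \geq 1} w(jx) e(jxy) - x^{-1}\widehat{w}(y)$, so that the difference between the discrete sum and its continuous analogue on the $q$-side is controlled by Poisson summation. The benefit is twofold: the $q$-sum can be extended from $q \leq Q$ to all $q \geq 1$ (because $h(x,y)$ is supported in $x \leq \max\{1, 2|y|\}$, cutting off the sum for small $|n|$), and the normalisation constant $c_0 = 1 + O_A(Q^{-A})$ absorbs the Poisson mismatch between $\sum_{j} w(jx)$ and $x^{-1}\int w$, which decays faster than any polynomial in $Q$.

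Once $h$ is constructed, the claimed estimates follow from direct analytic inspection. The bound $h(x,y) \ll x^{-1}$ and the support condition $x \leq \max\{1,2|y|\}$ come from the compact support of $w$: the sum $\sum_{j \geq 1} w(jx)$ has at most $O(x^{-1})$ non-zero terms and vanishes for $x > 2$, while the continuous piece $x^{-1}\widehat{w}(y)$ is effective only for $x \gtrsim 1/|y|$. The vanishing of $\partial_y h$ for $|y| \leq x/2$, $x \leq 1$ reflects the fact that within a single Farey arc of denominator $q \leq Q$, the oscillatory factor $e(n\beta/(qQ))$ with $|n| \leq qQ/2$ does not oscillate enough to affect $h$; the discrete and continuous pieces cancel exactly on derivatives in $y$. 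The derivative estimates $y^j \partial_y^j h \ll_N x^N + \min\{1,(x/|y|)^N\}$ for $x \leq 1$ then follow by repeated integration by parts in the oscillatory representation of $h$, exploiting the smoothness of $w$ together with the bound on the number of surviving terms.

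The main obstacle I expect is the precise bookkeeping needed to extend the $q$-sum from $q \leq Q$ to all positive integers while keeping the identity exact up to $O_A(Q^{-A})$. One must choose $w$ (and the associated $h$) so that the unwanted contributions from $q > Q$ telescope against the difference between smoothed and true Farey arcs for $q \leq Q$. Verifying the exact vanishing $\partial_y h \equiv 0$ on the regime $|y| \leq x/2$, $x \leq 1$ is the delicate algebraic identity at the heart of Heath-Brown's construction; and tracking the correct $N$-dependence in the derivative bounds requires a case split according to whether $|y| \ll x$, $|y| \sim x$, or $|y| \gg x$, since $h$ behaves qualitatively differently in these three ranges.
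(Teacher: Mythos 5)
The paper does not prove this lemma at all: it is quoted verbatim (up to multiplying $h$ by $x$) from Munshi's Lemma 23, which in turn is Heath-Brown's version of the Duke--Friedlander--Iwaniec $\delta$-method, so the benchmark here is Heath-Brown's construction. Measured against that, your sketch has a genuine gap: the object you propose to call $h$, namely $\sum_{j\geq 1} w(jx)e(jxy)-x^{-1}\widehat{w}(y)$, is not the right function and does not have the properties the lemma asserts. With that definition $\partial_y h$ is a sum of genuinely oscillating terms minus $x^{-1}\widehat{w}'(y)$ and is certainly not identically zero for $|y|\leq x/2$, and since $\widehat{w}$ is nowhere compactly supported, the support condition $x\leq\max\{1,2|y|\}$ fails as well; your heuristic explanations of these two properties (``exact cancellation on derivatives'', ``the continuous piece is effective only for $x\gtrsim 1/|y|$'') have no mechanism behind them. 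Likewise, the Farey-dissection starting point is not connected to the final identity: the arc around $a/q$ in a Farey dissection has endpoints determined by the neighbouring fractions, so its indicator is not a function of $q$ and $Q$ alone as your formula with $\mathbf{1}_{q,Q}(\beta)$ assumes, and nothing in the sketch explains how one would extract a constant $c_0$ independent of $n$ with error $O_A(Q^{-A})$ from that decomposition.

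The missing idea is the elementary divisor-switching identity at the heart of DFI and Heath-Brown: for a function $\omega$ with $\omega(0)=0$ and $\sum_{d\geq 1}\omega(d)=1$ one has $\delta(n,0)=\sum_{d\geq 1}\bigl(\mathbbm{1}_{d\mid n}\,\omega(d)-\mathbbm{1}_{d\mid n}\,\omega(|n|/d)\bigr)$, since for $n\neq 0$ the divisors pair up $d\leftrightarrow |n|/d$ and the sum telescopes to zero. Detecting $d\mid n$ by additive characters, writing each fraction $c/d$ in lowest terms $\gamma/q$ with $d=qr$, and taking $\omega(d)=c_0'\,w(d/Q)$ for a fixed smooth bump $w$ supported in, say, $[1/2,1]$, one arrives (in the paper's normalization) at $h(x,y)=\sum_{r\geq 1}\frac{1}{r}\bigl(w(xr)-w(|y|/(xr))\bigr)$ up to the constant $c_0$. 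All the stated properties are then read off from the support of $w$: the first term forces $xr\leq 1$ and the second forces $xr\leq 2|y|$, giving the support condition and $h\ll x^{-1}$ terms in the unnormalized version; for $|y|<x/2$ the second term vanishes identically, so $h$ is constant in $y$ there and $\partial_y h\equiv 0$ exactly; the derivative bounds \eqref{bound for h} follow by differentiating $w(|y|/(xr))$ and splitting according to the size of $|y|/x$; and $c_0=1+O_A(Q^{-A})$ comes from comparing $\sum_j w(j/Q)$ with $Q\int_0^\infty w$ by Poisson summation, using that $w$ is smooth and supported away from $0$. Your write-up invokes Poisson summation and a normalisation constant in roughly the right places, but without the divisor identity none of the exact structural claims of the lemma can be reached.
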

\begin{remark}
We have normalized $h(x,y)$ differently from \cite{munshi2017subconvexity}. We have multiplied the $h$ in Munshi's paper by $x$ (see Lemma 23, \cite{munshi2017subconvexity}).
\end{remark}

\begin{lemma}
For any $0 \leq x \leq 1$ and $h$ as in \eqref{delta method},
\begin{equation}\label{bound for integral of h}
\int_{\mathbb{R}}|h(x,y)|dy \ll x.
\end{equation}
\end{lemma}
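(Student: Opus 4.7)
The plan is to split the integral at $|y|=x$ and apply the two regimes of the pointwise bound \eqref{bound for h} (with $j=0$) separately on each piece. On the inner region $\{|y|\leq x\}$, taking $N=0$ in \eqref{bound for h} already gives $|h(x,y)|\ll 1$ uniformly, so a trivial measure estimate yields
\[
\int_{|y|\leq x}|h(x,y)|\,dy \;\ll\; x.
\]

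On the outer region $\{|y|>x\}$, the minimum in \eqref{bound for h} is attained by the second argument, so for any fixed $N\geq 2$ I get $|h(x,y)|\ll x^N + (x/|y|)^N$. The decaying term integrates cleanly:
\[
\int_{|y|>x}\Bigl(\frac{x}{|y|}\Bigr)^{N}\,dy \;=\; 2x^{N}\int_{x}^{\infty} y^{-N}\,dy \;=\; \frac{2x}{N-1} \;\ll\; x.
\]
Summing the two ranges gives the desired bound $\int_{\mathbb{R}}|h(x,y)|\,dy \ll x$.

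The only real bookkeeping issue, and the step I expect to be slightly delicate, is handling the additive $x^N$ piece in \eqref{bound for h}, which is constant in $y$ and hence not integrable over all of $\mathbb{R}$ on its own. This has to be disposed of using the effective $y$-support of $h$ built into the delta-method construction (for $x\leq 1$, $h(x,y)$ is negligible once $|y|$ exceeds a fixed constant), or by taking $N$ large enough that $x^N$ is smaller than $x$ on that effective support. Either route reduces the matter to the two-piece estimate above and makes the overall argument essentially immediate.
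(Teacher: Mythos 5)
Your argument is correct and is essentially the paper's own proof: split the integral at $|y|=x$, bound the piece $|y|\leq x$ trivially by $\ll x$, and integrate the decaying term $(x/|y|)^{N}$ over $|y|>x$. Your concern about the additive $x^{N}$ term is well founded---the paper's proof simply drops it, using only $x^{2}/|y|^{2}$ on the tail---and the resolution you sketch is the right one: in every application (e.g.\ the estimate \eqref{bound on W2 xi eta}) the $y$-variable is confined to a bounded range, on which $x^{N}$ with $N\geq 2$ contributes $\ll x$, so no further idea is needed.
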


\begin{proof}
Using \eqref{bound for h}
\begin{align}
\int_{\mathbb{R}}{|h(x,y)|dy} &= \int_{|y| \leq x}{|h(x,y)|dy} + \int_{|y| \geq x}{|h(x,y)|dy} \\
& \ll x + \int_{|y| \geq x}{\frac{x^2}{|y|^2}dy}\\
& \ll x.
\end{align}
\end{proof}

In practice to detect to $n =0$ for a sequence of integers in the range $[-X,X]$, it is logical to choose $Q = 2\sqrt{X}$, so that in the generic range for $q$ there is no oscillation of the weight function $h(x,y)$.

\begin{definition}\label{definition of SCP}
Let $K_1,K_2,M_1,M_2 \geq 1$, $W: [1/2,3] \times [1/2,3] \to \mathbb{C}$ be a compactly supported smooth function satisfying 
\begin{equation}\label{definition of K1 and K2}
W^{(i,j)}(x,y) \ll K_1^iK_2^j,
\end{equation}
for all $i,j \geq 0$.
Let  $a,b,c,d$ be integers and $a,b \neq 0$. If $f$ and $g$ are cuspidal automorphic forms (modular or Maass) of arbitrary level and nebentypus we define the shifted convolution sum $S$ as,
\begin{equation}\label{Equation defining SCP}
S_{f,g}(a,b,c,d,M_1,M_2) = \sum_{m=1}^\infty{\lambda_f(am+c)\lambda_g(bm+d)W_1\left(\frac{am+c}{M_1},\frac{bm+d}{M_2}\right) }.
\end{equation}
We define $(ad-bc)$ to be the ``shift'' of the sum $S_{f,g}$.
\end{definition}

We first establish a bound for $S_{f,g}(a,b,c,d,M_1,M_2)$ in the case $(ab,p)=1$.

\begin{remark}
We have assumed the primality of $p$, as this is the only case we shall need in this paper. The proof can be easily extended to include any $p \geq 1$. We can improve the bounds in this Lemma by using Spectral theory for $GL(2)$ to bound a sum of Kloosterman sums (see equation \ref{Voronoi Transformed SCP}). 
\end{remark}

\begin{proof}[Proof of Theorem \ref{SCP in ab coprime to p}]
The first bound in \eqref{bound for SCP rankin-selberg} follows from the Rankin-Selberg bound \eqref{rankin selberg bound}. The second bound follows from hypothesis $H_\theta$ \eqref{H theta}.

 We separate the oscillation of $f$ and $g$ in \eqref{Equation defining SCP} by introducing a $\delta$ symbol i.e
\begin{equation} \label{Expression for SCP}
S = \sum_{\substack{n_1 \equiv c(a)\\n_2 \equiv d(b)}}{\lambda_f(n_1)\lambda_g(n_2) \delta\left(\frac{n_1-c}{a}=\frac{n_2-d}{b} \right)} W_1\left(\frac{n_1}{M_1},\frac{n_2}{M_2}\right).
\end{equation}
We imitate Munshi's ideas (see \cite{munshi2017subconvexity}) to factor the $\delta$ symbol as a congruence mod $p$,
 \begin{equation}\label{congruence mod p}
\frac{n_1-c}{a} \equiv \frac{n_2-d}{b} (p),
 \end{equation}
 followed by the equality 
 \begin{equation}\label{factored equality}
\frac{n_1-c}{ap} - \frac{n_2-d}{bp} = 0.
 \end{equation}

 If $|ad-bc| > 3(|b|M_1+|a|M_2) $ then the shifted convolution sum $S_{f,g}$ is $0$, as the summation is empty. Thus the difference  $\left|\frac{n_1-c}{ap} - \frac{n_2-d}{bp} \right|$  is bounded by $\frac{X}{2}$, where
 \begin{align}\label{defintion of X}
 X= 12\left(\frac{M_1}{|a|} + \frac{M_2}{|b|}\right).
\end{align}
Let us define
\begin{equation}
Q= \sqrt{X/p}.
\end{equation}
  We assume $Q \geq 1$, otherwise \eqref{bound for SCP rankin-selberg} implies \eqref{bound for SCP non-zero shift} and $\eqref{bound for SCP shift multiple of level}$. We pick up the congruence \eqref{congruence mod p} using additive characters modulo $p$ and the equality \eqref{factored equality} using the Duke, Friedlander, and Iwaniec circle method \eqref{delta method}, with $Q$ as above. Thus
{\small
 \begin{equation}\label{factored delta symbol}
 \begin{split}
\delta\left(\frac{n_1-c}{a}=\frac{n_2-d}{b} \right) = \frac{c_0}{pQ}\sum_{q=1}^{\infty}{\frac{1}{q}}\sum_{\alpha(p)}&\sum_{\gamma (q)^*}{e\left(\frac{\alpha (n_1 -c)}{ap} -\frac{\alpha (n_2 -d)}{bp}   \right)}\\
&e\left(\frac{\gamma (n_1 -c)}{apq} -\frac{\gamma (n_2 -d)}{bpq}   \right)h\left(\frac{q}{Q},\frac{\frac{n_1-c}{ap}-\frac{n_2-d}{bp}}{Q^2} \right).
 \end{split}
 \end{equation}
}
Substituting this into \eqref{Expression for SCP}, we get
{\small
\begin{equation}
\begin{split}
S= \frac{c_0}{abpQ}\sum_{q=1}^{\infty}{\frac{1}{q}}\sum_{ \substack{\alpha(ap) \\ \beta(bp) \\ \alpha \equiv \beta (p)} }&\sum_{\gamma(q)^*}\sum_{\substack{n_1 \\n_2 }}{\lambda_f(n_1)\lambda_g(n_2)}{e\left(\frac{\alpha (n_1 -c)}{ap} -\frac{\beta (n_2 -d)}{bp}   \right)}\\
&e\left(\frac{\gamma (n_1 -c)}{apq} -\frac{\gamma(n_2 -d)}{bpq}   \right)h\left(\frac{q}{Q},\frac{\frac{n_1-c}{ap}-\frac{n_2-d}{bp}}{Q^2} \right) W\left(\frac{n_1}{M_1},\frac{n_2}{M_2}\right).
\end{split}
\end{equation}
}
We have expanded the congruence $n_1 \equiv c(a)$ and $n_2 \equiv d(b)$ using additive characters and combined the frequency mod $p$ using the  Chinese remainder theorem. Combining the frequency $\gamma$ modulo $q$ with $\alpha(ap)$ and $\beta(bp)$, we get
{\small
\begin{equation} \label{Transformed SCP}
\begin{split}
S= \frac{c_0}{abpQ}\sum_{q=1}^{\infty}{\frac{1}{q}}\sum_{ \substack{\alpha(apq) \\ \beta(bpq) \\ \alpha \equiv \beta (pq) \\ (\alpha,q)=1} }\sum_{\substack{n_1 \\n_2 }}{\lambda_f(n_1)\lambda_g(n_2)}{e\left(\frac{\alpha (n_1 -c)}{apq} -\frac{\beta (n_2 -d)}{bpq}   \right)}W_1\left(\frac{n_1}{M_1},\frac{n_2}{M_2} \right),
\end{split}
\end{equation}
}
where
\begin{equation}
W_1\left(\frac{n_1}{M_1},\frac{n_2}{M_2}\right) = h\left(\frac{q}{Q},\frac{\frac{n_1-c}{ap}-\frac{n_2-d}{bp}}{Q^2} \right) W\left(\frac{n_1}{M_1},\frac{n_2}{M_2}\right).
\end{equation}
Using \eqref{bound for h} and \eqref{definition of K1 and K2}, we see that for all $i,j \geq 0$
\begin{equation}\label{bound for derivatives of W1}
W_1^{(i,j)}(x,y) \ll K_1^iK_2^j\left(\frac{Q}{q}\right)^{i+j}.
\end{equation}

We plan to apply the Voronoi summation formula \eqref{voronoi summation} to the $n_1$ and $n_2$ sum in \eqref{Transformed SCP}. The modulus in Voronoi summation depends on gcd$(\alpha,apq)$ and gcd$(\beta, bpq)$. Since $\alpha$ and $\beta$ are coprime to $q$, the gcd$(\alpha,apq)$ and $(\beta,bpq)$ are $(\alpha,ap)$ and $(\beta,bp)$ respectively. We will assume that $(\alpha,ap) = (\beta,bp) =1$ and $(q,\text{Rad}(ab) )=1$, to simplify notation. ($\text{Rad}(a)$ is the radical of $a$, defined to be $\text{Rad}(a) = \prod_{p|a}{p^\infty}$.) We shall indicate the changes required to handle the general case towards the end of the proof. The same bound established below holds below for the other cases too. Let us define
{\small
\begin{equation}\label{S1,1,1}
\begin{split}
S^{1,1,1}= \frac{c_0}{abpQ}\sum_{(q,ab)=1}{\frac{1}{q}}\sum_{ \substack{\alpha(apq)* \\ \beta(bpq)* \\ \alpha \equiv \beta (pq) } }\sum_{\substack{n_1 \\n_2 }}{\lambda_f(n_1)\lambda_g(n_2)}{e\left(\frac{\alpha (n_1 -c)}{apq} -\frac{\beta (n_2 -d)}{bpq}   \right)}W_1\left(\frac{n_1}{M_1},\frac{n_2}{M_2}\right).
\end{split}
\end{equation}
}
The ${(1,1,1)}$, in the superscript of $S$, refers to the greatest common divisors $(\alpha,ap)$, $(\beta,bp)$, and $(q,\text{Rad}(ab) )$ respectively. Applying the Voronoi summation formula to $n_1$ and $n_2$ in \eqref{S1,1,1}, we get
{\small
\begin{equation}\label{Voronoi transformed SCP before simplification}
\begin{split}
S^{1,1,1}= \frac{c_0}{abpQ}\sum_{(q,ab)=1}{\frac{1}{q}}\sum_{ \substack{\alpha(apq)* \\ \beta(bpq)* \\ \alpha \equiv \beta (pq) } }\sum_{\substack{n_1 \\n_2 }}{\overline{\lambda_{f}} (n_1)\overline{\lambda_{g}}(n_2)}{e\left(\frac{\overline{\alpha} n_1 }{apq} -\frac{\overline{\beta} n_2}{bpq}   \right)e\left(-\frac{\alpha c }{apq} +\frac{\beta d}{bpq}   \right) }\\
W_2\left(\frac{M_1n_1}{(apq)^2},\frac{M_2n_2}{(bpq)^2}\right) + \sum_{\substack{ (-,+) \\ (+,-) \\ (-,-)}}{\dots},
\end{split}
\end{equation}
}
where
\begin{equation}\label{definition of W2 in SCP}
W_2\left(\xi,\eta\right)= \frac{M_1M_2}{ab(pq)^2}\int_0^\infty\int_0^\infty{W_1(x,y)J^+_f\left(4\pi \sqrt{\xi x}\right) J^+_g\left(4 \pi \sqrt{\eta y}\right)dxdy}.
\end{equation}
The summation $\sum_{\substack{ (-,+) \\ (+,-) \\ (-,-)}}{\dots}$, refers to the three other terms coming from the $\pm$ terms on the right hand side of the Voronoi formula \eqref{voronoi summation}. They have the same structure and can be handled similarly. 
Lemma \ref{Bounds for integral transforms} and the bound \eqref{bound for derivatives of W1} implies that for all $N,M \geq 0$
$$
W_2\left(\xi,\eta\right) \ll \left(\frac{(K_1Q)^2}{q^2 \xi} \right)^{-N} \left(\frac{(K_2Q)^2}{q^2 \eta}\right)^{-M}.
$$
Thus, choosing $N$ and $M$ sufficiently large, we can restrict the summation over $n_1$ and $n_2$ in \eqref{Voronoi transformed SCP before simplification} to 
\begin{equation}\label{definition of N1 and N2}
n_1 \ll (pX)^\epsilon\frac{(K_1apQ)^2}{M_1} = N_1 \text{ and } n_2 \ll (pX)^\epsilon\frac{(K_2bpQ)^2}{M_2} =N_2.
\end{equation}
In the complementary range for $n_1$ and $n_2$ we shall use standard estimates for Bessel functions \cite[Lemma C.2]{kowalski2002rankin} to bound $W_2$ as follows.
We can assume $bM_1 \leq aM_2$, without loss of generality. Making the substitution $x \rightsquigarrow x/M_1$ and $y \rightsquigarrow y/M_2$ in the integral \eqref{definition of W2 in SCP}, we have
{\small
\begin{equation}
\begin{split}
W_2\left(\xi,\eta\right) = \frac{1}{ab(pq)^2}\int_{M_1/2}^{3M_1}\int_{M_2/2}^{3M_2}{W\left(\frac{x}{M_1},\frac{y}{M_2}\right)h\left(\frac{q}{Q},\frac{\frac{x-c}{a}-\frac{y-d}{b}}{X} \right)J^+_f\left(4\pi \sqrt{\frac{\xi x}{M_1} }\right)}\\
 J^+_g\left(4 \pi \sqrt{\frac{\eta y}{M_2} }\right)dxdy.
\end{split}
\end{equation}
}
{\small
\begin{align}
W_2(\xi,\eta)&\ll \frac{1}{(\xi \eta)^{1/4}(abpq)^2}\int_{bM_1/2}^{3bM_1}\int_{aM_2/2}^{3aM_2}{\left|W\left(\frac{x}{bM_1},\frac{y}{aM_2}\right)\right| \left|h\left(\frac{q}{Q},\frac{x-y + ad-bc}{abX} \right|\right)dxdy}\\
&\ll \frac{1}{(\xi \eta)^{1/4}(abpq)^2}\int_{bM_1/2}^{3bM_1}{\left|W\left(\frac{x}{bM_1},\frac{x-u+h}{aM_2}\right)\right|}\int{\left|h\left(\frac{q}{Q},\frac{u}{abX} \right)\right|dudx},
\end{align}
}
where $h= ad-bc$ is the shift. Using estimate \eqref{bound for integral of h} to bound the integral over $u$, we get the bound
{\small
\begin{align}\label{bound on W2 xi eta}
W_2\left(\xi,\eta\right) &\ll \frac{M_1M_2q}{(\xi \eta)^{1/4}ab(pq)^2Q}.
\end{align}	
}
Using the Chinese remainder theorem to split the exponential sum modulo $pq$ and $ab$, equation \eqref{Voronoi transformed SCP before simplification} can be rewritten as
{\small
\begin{equation} \label{Voronoi Transformed SCP}
\begin{split}
S^{1,1,1}= \frac{c_0}{abpQ}\sum_{\substack{(q,ab)=1 \\q \leq Q }}{\frac{1}{q}}&\sum_{\substack{n_1 \leq N_1 \\n_2 \leq N_2 }}{\overline{\lambda_f}(n_1)\overline{\lambda_g}(n_2)}S(\overline{ab}(bn_1-an_2);\overline{ab}(ad-bc);pq)\\S(\overline{pq}n_1;-\overline{pq}c;a)
&S(-\overline{pq}n_2;\overline{pq}d;b)W_2\left(\frac{M_1n_1}{(apq)^2},\frac{M_2n_2}{(bpq)^2}\right),
\end{split}
\end{equation}
}
where $S(m,n,c) = \sum_{\alpha(c)*}{e\left(\frac{\alpha m + \overline{\alpha}n}{c} \right)},$ is the Kloosterman sum. We shall use the Weil bound
\begin{equation}\label{Weil bound}
S(m,n,c) \ll (m,n,c)^{1/2}c^{1/2+\epsilon}.
\end{equation}
The crucial point here is that the shift $(ad-bc)$ will be a multiple of $p$ in our application. In this case, the Kloosterman sum modulo $p$ becomes a Ramanujan sum \eqref{Ramanujan sum}. If $n \equiv 0 (c)$, then
$$
S(m,n,c) =S(m,0,c) = \mathfrak{r}_c(m) \ll (m,c)c^{\epsilon}.
$$
This allows us to save an additional $\sqrt{p}$ in the Kloosterman sum modulo $p$. We first consider the case $ad-bc \equiv 0(p)$.
Using the estimate \eqref{bound on W2 xi eta} to bound $W_2$ along with the Weil bound in \eqref{Voronoi Transformed SCP}, we get
\begin{equation}
\begin{split}
S^{1,1,1} \ll (abpM_1M_2)^\epsilon\frac{(M_1M_2)^{3/4}} {abp^2Q^2}\sum_{\substack{q \leq Q} }{\frac{1}{\sqrt{q}}}\sum_{\substack{n_1 \leq N_1 \\ n_2 \leq N_2} }{\frac{|\lambda_f(n_1)|}{n_1^{1/4}}\frac{|\lambda_g(n_2)|}{n_2^{1/4}} } \times \\
(n_1,a)^{1/2}(n_2,b)^{1/2}(ad-bc,q)^{1/2}(bn_1-an_2,p) +O((pM_1M_2)^{-2018)}).
\end{split}
\end{equation}
Isolating the biggest common divisor of $(\text{Rad}(a),n_1)$ and $(\text{Rad}(b),n_2)$, we get
\begin{align}\label{S 1,1,1 bracket}
S^{1,1,1} &\ll (abpX)^\epsilon\frac{(M_1M_2)^{3/4}} {abp^2Q^2}\sum_{\substack{d_3|(ad-bc)\\ q \leq Q/d_3 } }{\frac{1}{\sqrt{q}}}\sum_{d_1|a,d_2|b}{(d_1d_2)^{1/2} }\sum_{\substack{d_1|d_1'|\text{Rad}(d_1)\\d_2|d_2'|\text{Rad}(d_2)} }{\frac{|\lambda_f(d_1')|}{d_1'^{1/4}} \frac{|\lambda_f(d_2')|}{d_2'^{1/4}} } \nonumber \\ 
&\left(p\sum_{\substack{n_1 \ll \frac{N_1}{d_1'} \\ n_2 \ll \frac{N_2}{d_2'} \\ bd_1'n_1-ad_2'n_2 \equiv 0(p)} }{\frac{|\lambda_f(n_1)|}{n_1^{1/4}}\frac{|\lambda_g(n_2)|}{n_2^{1/4}} } + \sum_{\substack{n_1 \ll \frac{N_1}{d_1'} \\ n_2 \ll \frac{N_2}{d_2'} } }{\frac{|\lambda_f(n_1)|}{n_1^{1/4}}\frac{|\lambda_g(n_2)|}{n_2^{1/4}}} \right).
\end{align}
We use Lemma \ref{Bounds for convolution of fourier coefficients on AP}  with $X =\frac{N_1}{d_1}$ , $Y = \frac{N_2}{d_2}$ and $c=p$, to bound the first term inside the bracket (see equation \eqref{definition of N1 and N2} for the definition of $N_1$ and $N_2$). This gives,
\begin{align}
S^{1,1,1} &\ll (abpX)^\epsilon\frac{(M_1M_2)^{3/4}} {abp^2Q^2} \times (N_1N_2)^{3/4}\sum_{q \leq Q}{\frac{1}{\sqrt{q}}}\\
&\ll (abpX)^\epsilon (K_1K_2)^{3/2}\sqrt{ab}p^{1/4}X^{3/4}.
\end{align}

The right hand side of the inequality above also bounds the second term in the bracket of equation \eqref{S 1,1,1 bracket}.  This proves the bound claimed in equation \eqref{bound for SCP shift multiple of level}.

If $(ad-bc,p)=1$, then the proof works out identically till equation \eqref{Voronoi Transformed SCP}. But we have a Kloosterman modulo $p$ instead of the Ramanujan sum now. Thus, using the Weil bound, we won't have the first term in the bracket of \eqref{S 1,1,1 bracket} and the second term would be multiplied by $\sqrt{p}$. Hence, the upper bound of equation \eqref{bound for SCP non-zero shift} is multiplied by an additional $\sqrt{p}$.

We had restricted our attention to the terms satisfying gcd of $(\alpha,apq)=(\beta,bpq)=(q,ab)=1$ in \eqref{Transformed SCP}:
{\small
\begin{equation}
\begin{split}
S= \frac{c_0}{abpQ}\sum_{q=1}^{\infty}{\frac{1}{q}}\sum_{ \substack{\alpha(apq) \\ \beta(bpq) \\ \alpha \equiv \beta (pq) \\ (\alpha,q)=1} }\sum_{\substack{n_1 \\n_2 }}{\lambda_f(n_1)\lambda_g(n_2)}{e\left(\frac{\alpha (n_1 -c)}{apq} -\frac{\beta (n_2 -d)}{bpq}   \right)}W_1\left(\frac{n_1}{M_1},\frac{n_2}{M_2} \right).
\end{split}
\end{equation}
}
We now indicate the modifications necessary to handle the general case. Let us first consider the contribution of the frequencies $\alpha$ which are divisible by $p$, denoted by $S^{\alpha \equiv 0 (p)}$. Since $\beta \equiv \alpha (p)$, this implies $\beta \equiv 0 (p)$. Furthermore, since $(\alpha,q)=1$ and $p$ divides $\alpha$, $(q,p) =1$. We decompose $a=a_1a_2$, $b=b_1b_2$, and $q = q_1q_2$ such that $(\alpha,ap) = a_1p$, $(\beta,bp)=b_1p$, and $(q, \text{Rad}(ab))= q_1$ respectively. Then
\begin{equation}\label{S alpha divisible by p}
S^{\alpha \equiv 0 (p)} = \sum_{a_1|a}\sum_{b_1|b}\sum_{\substack{q_1| \text{Rad}(ab) \\ q_1 \leq Q} }S^{a_1,b_1,q_1},
\end{equation}
where
{\small
\begin{equation}
\begin{split}
S^{a_1,b_1,q_1}= \frac{c_0}{abpQ}\sum_{(q_2,ab)=1 }{\frac{1}{q_1q_2}}&\sum_{ \substack{\alpha(a_2q_1q_2)^* \\ \beta(b_2q_1q_2)^* \\ a_1\alpha \equiv b_1\beta (q_1q_2)} } \\
&\sum_{\substack{n_1 \\n_2 }}{\lambda_f(n_1)\lambda_g(n_2)}{e\left(\frac{\alpha (n_1 -c)}{a_2q_1q_2} -\frac{\beta (n_2 -d)}{b_2q_1q_2}   \right)}W_1\left(\frac{n_1}{M_1},\frac{n_2}{M_2} \right),
\end{split}
\end{equation}
}
{\small
\begin{equation}
W_1\left(\frac{n_1}{M_1},\frac{n_2}{M_2}\right) = h\left(\frac{q_1q_2}{Q},\frac{\frac{n_1-c}{ap}-\frac{n_2-d}{bp}}{Q^2} \right) W\left(\frac{n_1}{M_1},\frac{n_2}{M_2}\right).
\end{equation}
}

We apply Voronoi summation \eqref{voronoi summation} formula to the $n_1$ and $n_2$ sum. The only difference in this case is that moduli ($a_2q_1q_2$ and $b_2q_1q_2$) are not divisible by $p$. The Voronoi transformed sum is
{\small
\begin{equation}\label{S a,b,q}
\begin{split}
S^{a_1b_1,q_1}&= \frac{c_0}{abpQq_1} \sum_{ (q_2,a_2b_2)=1 }{\frac{1}{q_2}}\sum_{ \substack{\alpha(a_2q_1q_2)^* \\ \beta(b_2q_1q_2)^* \\ a_1\alpha \equiv b_1\beta (q_1q_2)} }\\
&\sum_{\substack{n_1 \\n_2 }}{\overline{\lambda_f}(n_1) \overline{\lambda_g}(n_2) }{e\left(-\frac{\overline{p \alpha} n_1}{a_2q_1q_2} +\frac{\overline{p\beta }n_2}{b_2q_1q_2}   \right) e\left(-\frac{\alpha c}{a_2q_1q_2} +\frac{\beta d}{b_2q_1q_2}  \right)}W_2\left(\frac{M_1n_1}{p(a_2q_1q_2)^2},\frac{M_2n_2}{p(b_2q_1q_2)^2} \right)\\
& + \sum_{\substack{(-,+)} }{\dots} +\sum_{\substack{(+,-)} }{\dots} +\sum_{\substack{(-,-)} }{\dots},
\end{split}
\end{equation}
}
where
\begin{equation}
W_2\left(\xi,\eta\right)= \frac{M_1M_2}{a_2b_2p(q_1q_2)^2}\int_0^\infty\int_0^\infty{W_1(x,y)J^+_f\left(4\pi \sqrt{\xi x}\right) J^+_g\left(4 \pi \sqrt{\eta y}\right)dxdy},
\end{equation}
and the summation $\sum_{\substack{ (-,+) \\ (+,-) \\ (-,-)}}{\dots}$, refers to the three other terms coming from the $\pm$ terms on the right hand side of the Voronoi formula \eqref{voronoi summation}. Like before, they have the same structure and can be handled similarly. We use the Chinese remainder theorem to rewrite the exponential sums as Kloosterman sums. Then the sum over $\alpha(a_2q_1q_2)$ and $\beta(b_2q_1q_2)$ satisfying $ a_1\alpha \equiv b_1\beta (q_1q_2)$,  can be rewritten as
{\small
\begin{multline}
\sum_{\substack{\alpha (a_2q_1q_2) \\ \beta(b_2q_1q_2) \\ a_1\alpha \equiv b_1\beta(q_1q_2) } }{e\left(-\frac{\overline{p \alpha} n_1}{a_2q_1q_2} +\frac{\overline{p\beta }n_2}{b_2q_1q_2}   \right) e\left(-\frac{\alpha c}{a_2q_1q_2} +\frac{\beta d}{b_2q_1q_2}  \right)} = \\ S(\overline{abq_1}(ad-bc);\overline{b_2a_2q_1p}(a_2b_1n_2-a_1b_2n_1) ;q_2)\\ \frac{1}{q_1}\sum_{\delta(q_1)}{S(\delta a_1 -\overline{q_2}c; -\overline{pq_2}n_1;a_2q_1 )S( \overline{q_2}d - \delta b_1; \overline{pq_2}n_2;b_2q_1 )}.
\end{multline}
}
Using the Weil-bound for Kloosterman sum \eqref{Weil bound}, this is bounded by
$$
(a_2b_2q_2q_1^2)^{1/2+ \epsilon} (ad-bc,q_2)^{1/2} (n_1,a_2q_1)^{1/2}(n_2,b_2q_1)^{1/2}.
$$ 
Substituting this bound into \eqref{S a,b,q}, using the Rankin-Selberg bound \eqref{rankin selberg bound}, we get
$$
S^{a_1,b_1,q_1} \ll  (abpX)^\epsilon (K_1K_2)^{3/2}\sqrt{ab}q_1^{-1/2}p^{-3/4}X^{3/4}.
$$
This implies
$$
S^{\alpha \equiv 0(p)} \ll (abpX)^\epsilon (K_1K_2)^{3/2}\sqrt{ab}p^{-3/4}X^{3/4},
$$
which is stronger than the one claimed in \eqref{bound for SCP shift multiple of level}.

We are left with the contribution of frequencies $\alpha$ such that $(\alpha,p)=1$ to \eqref{Voronoi Transformed SCP}, denoted by $S^{(\alpha,p)=1}$. 
If $\alpha \not\equiv 0(p) $, then $\beta \not\equiv 0(p)$, as $\alpha \equiv \beta(pq)$. Let us decompose $a =a_1a_2$, $b=b_1b_2$, and $q=q_1q_2$ such that $(\alpha,apq) = a_1$, $(\beta,bpq)=b_2$, and  $(q, \text{Rad}(ab)) = q_1$ respectively. Then
{\small
\begin{equation}
\begin{split}
S^{(\alpha,p)=1}= \frac{c_0}{abpQ}\sum_{q_1| \text{Rad}(ab)}&\sum_{(q_2,ab)=1}^{\infty}{\frac{1}{q_1q_2}}\sum_{\substack{a_1|a\\ b_1|b} }\sum_{ \substack{\alpha(a_2pq_1q_2)* \\ \beta(b_2pq_1q_2)* \\ a_1\alpha \equiv b_1\beta (pq_1q_2)} }\\
&\sum_{\substack{n_1 \\n_2 }}{\lambda_f(n_1)\lambda_g(n_2)}{e\left(\frac{\alpha (n_1 -c)}{a_2pq_1q_2} -\frac{\beta (n_2 -d)}{b_2pq_1q_2}   \right)}W_1\left(\frac{n_1}{M_1},\frac{n_2}{M_2} \right).
\end{split}
\end{equation}
}
We apply Voronoi summation to the $n_1$ and $n_2$ sum modulo $a_2pq_1q_2$ and $b_2pq_1q_2$ respectively, and proceed exactly like the proof above for $S^{1,1,1}$.
\end{proof}

We now consider the case $p|(a,b)$. 
\begin{theorem}\label{SCP in a,b divisible by p}
Let $p$ be a prime number or $p=1$. Let  $a,b,c,d$ be integers such that $a$ and $b$ are non-zero and $p$ divides gcd$(a,b)$. Let $f,g$ be non-exceptional cuspidal newforms (modular or Maass) of level p and trivial nebentypus. For any $M_1,M_2,K_1,K_2 \geq 1$, we claim the following upper bounds for the shifted convolution sum $S_{f,g}$:
\begin{equation}
S_{f,g}(a,b,c,d,M_1,M_2) \ll p^\epsilon \min \{(M_1M_2)^{1/2},(M_1M_2)^\theta X\},
\end{equation}
where $X$ has been defined in \eqref{defintion of X} and $\theta$ is the exponent towards Ramanujan-conjecture for $f$ and $g$. Further, if the shift $ad-bc$ is non-zero, then
\begin{equation}\label{bound for SCP non-zero shift ab divisible by p}
S_{f,g}(a,b,c,d,M_1,M_2) \ll p^\epsilon(K_1K_2)^{3/2}\sqrt{ab}X^{3/4}.
 \end{equation}
\end{theorem}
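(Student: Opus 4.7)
The plan for proving the first bound $\min\{(M_1 M_2)^{1/2}, (M_1 M_2)^\theta X\}$ is entirely elementary. The factor $(M_1 M_2)^\theta X$ follows immediately from the Hecke bound $|\lambda_f(n)|, |\lambda_g(n)| \ll n^{\theta+\epsilon}$ of Definition~\ref{H theta}, together with the observation that the $m$-sum contains at most $X$ nonzero terms. For the factor $(M_1 M_2)^{1/2}$, I would apply Cauchy--Schwarz in $m$ and invoke the Rankin--Selberg bound \eqref{rankin selberg bound}, since each of the sums $\sum_m |\lambda_f(am+c)|^2$ and $\sum_m |\lambda_g(bm+d)|^2$ runs over an arithmetic progression of length $\ll M_i^{1+\epsilon}/|a|$ (resp.\ $/|b|$), and the product gives the desired bound.

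For the second bound, the plan is to mirror the proof of Theorem~\ref{SCP in ab coprime to p} almost verbatim, adapting only the initial decomposition to reflect that $p$ already divides both $a$ and $b$. Writing $a = pa'$, $b = pb'$, the hypothesis $ad - bc \neq 0$ forces $a'd - b'c \neq 0$, and the shift $ad - bc = p(a'd - b'c)$ is automatically a nonzero multiple of $p$. The key structural difference is that no preliminary factorization of the $\delta$-symbol through a congruence modulo $p$ is needed, since the factors of $p$ are already contained in $a$ and $b$. I would therefore apply the Duke--Friedlander--Iwaniec circle method \eqref{delta method} directly to
\[
\delta\!\left(\frac{n_1 - c}{a} - \frac{n_2 - d}{b} = 0\right)
\]
with $Q = \sqrt{X}$ for $X$ as in \eqref{defintion of X}, and expand the congruences $n_1 \equiv c \pmod{a}$ and $n_2 \equiv d \pmod{b}$ with additive characters modulo $a$ and $b$. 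Combining the DFI frequency modulo $q$ with these two via the Chinese remainder theorem yields frequencies $\alpha \pmod{aq}$ and $\beta \pmod{bq}$ coupled by a linear congruence modulo $q$, in direct analogy with \eqref{Transformed SCP}.

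Next I would apply the Voronoi summation formula (Lemma~\ref{voronoi summation}) to both the $n_1$- and $n_2$-sums. Since the moduli $aq$ and $bq$ already contain the level $p$ as a divisor, the Voronoi transform proceeds straightforwardly and localizes the dual variables in ranges $N_1 \ll (pX)^\epsilon (K_1 a Q)^2 / M_1$ and $N_2 \ll (pX)^\epsilon (K_2 b Q)^2 / M_2$; the resulting weight function obeys the analogue of the bound \eqref{bound on W2 xi eta} via Lemma~\ref{Bounds for integral transforms} and \eqref{bound for integral of h}. The exponential sums collapse into Kloosterman sums modulo $aq$ and $bq$, and applying the Weil bound \eqref{Weil bound} together with the standard gcd bookkeeping produces a factor $\sqrt{ab}\, q$ times lower-order gcd savings. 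Summing over $q \le Q$, using Lemma~\ref{Bounds for convolution of fourier coefficients on AP} to bound the inner sums of $|\lambda_f(n_1)||\lambda_g(n_2)|$ on arithmetic progressions, and retracing the assembly carried out after \eqref{Voronoi Transformed SCP}, gives
\[
S_{f,g}(a,b,c,d,M_1,M_2) \ll p^{\epsilon}(K_1K_2)^{3/2}\sqrt{ab}\, X^{3/4}.
\]

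The main technical obstacle, exactly as in Theorem~\ref{SCP in ab coprime to p}, is the casework needed to track the various gcds $(\alpha, ap)$, $(\beta, bp)$ and $(q, \mathrm{Rad}(ab))$ before applying Voronoi. However, because $p$ already divides both $a$ and $b$, several of the subcases that appear in Theorem~\ref{SCP in ab coprime to p} either collapse or become degenerate, so the bookkeeping is in fact slightly simpler here. I do not expect to exploit any Ramanujan-sum savings in this regime, which is why the final bound carries no power of $p$ beyond $p^\epsilon$, in contrast to the sharper estimate \eqref{bound for SCP shift multiple of level} available when $(ab, p) = 1$ and $p \mid ad - bc$.
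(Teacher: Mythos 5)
Your proposal is correct and follows essentially the same route as the paper: the first bound comes from Rankin--Selberg/Cauchy--Schwarz and the Hecke bound $H_\theta$, and for the second bound the paper likewise skips the factoring of the $\delta$-symbol through a congruence mod $p$, applies the Duke--Friedlander--Iwaniec expansion directly with $Q=\sqrt{X}$, and then repeats the argument of Theorem \ref{SCP in ab coprime to p} verbatim (Voronoi with moduli $aq$, $bq$, Weil bound, gcd bookkeeping). Your remark that no Ramanujan-sum saving in $p$ is available here, the power of $p$ being absorbed into $\sqrt{ab}$, is also consistent with the paper's statement.
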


\begin{proof}
The proof works exactly like the previous one. We just skip the step of factoring the delta symbol through a congruence mod $p$ i.e \eqref{congruence mod p}. Let $Q = \sqrt{X}$. We use the following expression
{\small
 \begin{equation}
 \begin{split}
\delta\left(\frac{n_1-c}{a}=\frac{n_2-d}{b} \right) = \frac{c_0}{Q}\sum_{q=1}^{\infty}{\frac{1}{q}}&\sum_{\gamma (q)^*}e\left(\frac{\gamma (n_1 -c)}{aq} -\frac{\gamma (n_2 -d)}{bq}   \right)h\left(\frac{q}{Q},\frac{\frac{n_1-c}{a}-\frac{n_2-d}{b}}{Q^2} \right),
 \end{split}
 \end{equation}
}
instead of \eqref{factored delta symbol} and proceed identically from here.
\end{proof}

\begin{theorem}\label{SCP square root saving}
Let $p$ be fixed. Let  $a,b,c,d$ be integers such that $a$ and $b$ are positive.  Further, let $f,g$ be non-exceptional newforms of level p and any nebentypus.

For any $M_1,M_2,K_1,K_2 \geq 1$, we claim the following upper bounds for the shifted convolution sum $S_{f,g}$:
\begin{equation}
S_{f,g}(a,b,c,d,M_1,M_2) \ll_{p} \min \{(M_1M_2)^{1/2},(M_1M_2)^\theta X\},
\end{equation}
where $X$ has been defined in \eqref{defintion of X} and $\theta$ is the exponent towards Ramanujan-conjecture for cusp forms on congruence subgroups of $SL_2(\mathbb{Z})$.
If the shift $ad-bc$ is non-zero then,
\begin{equation}\label{bound for SCP non-zero shift square root saving}
S_{f,g}(a,b,c,d,M_1,M_2) \ll_{p,K_1,K_2}  (abX)^{1/2+\theta}
 \end{equation}
 where the dependence on $p$ is polynomial.
\end{theorem}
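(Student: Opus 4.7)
The first bound, $S_{f,g} \ll \min\{(M_1M_2)^{1/2}, (M_1M_2)^\theta X\}$, is the same as the corresponding bound in \eqref{bound for SCP rankin-selberg} of Theorem \ref{SCP in ab coprime to p}: the $(M_1M_2)^{1/2}$ piece follows by Cauchy--Schwarz together with the Rankin--Selberg bound \eqref{rankin selberg bound}, while the $(M_1M_2)^\theta X$ piece follows by applying the Hypothesis $H_\theta$ termwise and using that the sum has length $\ll X$. Here the implicit constant is allowed to depend on $p$ and on the spectral parameters of $f$ and $g$.

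For the square-root saving bound \eqref{bound for SCP non-zero shift square root saving}, the plan is to follow the DFI circle method plus Voronoi strategy already executed in the proof of Theorem \ref{SCP in ab coprime to p}, but to replace the Weil bound on the resulting Kloosterman sums with spectral theory. Concretely, after substituting $n_1 = am+c$ and $n_2 = bm+d$, rewrite
\begin{equation*}
S_{f,g}(a,b,c,d,M_1,M_2) = \sum_{\substack{n_1, n_2 \geq 1 \\ b n_1 - a n_2 = bc - ad}} \lambda_f(n_1)\lambda_g(n_2) W_1\!\left(\frac{n_1}{M_1},\frac{n_2}{M_2}\right),
\end{equation*}
a shifted convolution sum with nonzero shift $h = bc - ad$ satisfying $|h| \ll abX$ and localization $n_1 \sim M_1$, $n_2 \sim M_2$. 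Since $p$ is fixed, this is exactly the setting in which Blomer \cite{blomer2004shifted} obtains square-root cancellation in the length, namely
\begin{equation*}
\sum_{\substack{b n_1 - a n_2 = h}} \lambda_f(n_1)\lambda_g(n_2)\Psi(n_1,n_2) \ll_{p,K_1,K_2,\epsilon} (ab)^{\epsilon}(M_1+M_2)^{1/2+\theta+\epsilon}|h|^{\theta+\epsilon},
\end{equation*}
for a smooth weight $\Psi$ with bounded Sobolev norms. Plugging in $|h| \ll abX$ and $M_1 + M_2 \ll (a+b)X \ll abX$, one immediately obtains $(abX)^{1/2+\theta+\epsilon}$, as claimed.

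If one wishes to redo the argument intrinsically rather than quote \cite{blomer2004shifted}, the route is as follows: apply the DFI $\delta$-method with $Q = \sqrt{X}$, perform Voronoi summation in both $n_1$ and $n_2$ variables, and arrive at an expression of the same shape as \eqref{Voronoi Transformed SCP} but with no mod-$p$ factoring and with Kloosterman sums $S(\,\cdot\,,\,\cdot\,; q)$ of modulus $q \ll Q$. Rather than bounding these sums individually by Weil, open the Kloosterman sums via the Kuznetsov trace formula for $\Gamma_0(q)$, which expresses the $q$-sum in terms of a spectral average of Hecke eigenvalue products $\lambda_u(n_1)\lambda_u(n_2)$ against archimedean transforms of the test function. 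The spectral large sieve of Deshouillers--Iwaniec then yields an extra square-root saving in $q$ over the Weil bound, upgrading the $X^{3/4}$ exponent of \eqref{bound for SCP non-zero shift} to $X^{1/2+\theta+\epsilon}$, where $\theta$ is the Ramanujan exponent arising from non-tempered parts of the spectrum (Maass forms failing Selberg's $1/4$-conjecture).

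The main obstacle, and the reason this step is quarantined to the case that $p$ is fixed in Theorem \ref{main theorem when f is fixed}, is that the spectral large sieve applied on $\Gamma_0(pq)$ (with $q$ the circle-method modulus) loses factors of $p$ that would be unacceptable when $p$ is allowed to grow. For fixed $p$ those losses are harmless, and the polynomial dependence on $p$, $K_1$, $K_2$ can be tracked through the Kuznetsov transforms and the Sobolev norms of $W_1$ without difficulty; this dependence is absorbed into the implicit constant of the statement.
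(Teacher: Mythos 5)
Your proposal takes essentially the same route as the paper: the first bound is inherited from the Rankin--Selberg bound \eqref{rankin selberg bound} and hypothesis $H_\theta$ exactly as in Theorem \ref{SCP in ab coprime to p}, and the nonzero-shift bound \eqref{bound for SCP non-zero shift square root saving} is obtained by quoting Blomer's spectral-theoretic result \cite{blomer2004shifted}, together with the same remark that one could instead re-run the circle-method/Voronoi argument and treat the modulus sum in \eqref{Voronoi Transformed SCP} by $GL(2)$ spectral theory rather than the Weil bound (Blomer himself works with Jutila's circle method rather than the DFI/Kuznetsov route you sketch, but this is only a remark in both treatments). Two points where your write-up diverges from what is actually needed. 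First, Blomer's Theorem 1.3 is stated for coprime coefficients, whereas in the application (Lemma \ref{Bound on O in case p is fixed}) one has $a=p_1r$, $b=p_1r'$, which need not be coprime (the off-diagonal includes $r=r'$ with $t\neq t'$, and $p_1$ may exceed $1$); the paper explicitly flags this and disposes of it by a gcd reduction (``minor modification''), and your proposal should record that reduction rather than apply the cited theorem directly. Second, the form of Blomer's bound you quote carries an extra factor $|h|^{\theta+\epsilon}$; since the only available control on the shift is $|h|\ll abX$, substituting as you do yields $(abX)^{1/2+2\theta+2\epsilon}$, not the claimed $(abX)^{1/2+\theta+\epsilon}$. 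The version recorded in the paper, $S_{f,g}\ll_{p,K_1,K_2}(bM_1+aM_2)^{1/2+\theta}\ll (abX)^{1/2+\theta}$, has no such shift factor, and that is the form needed to reach $\delta=\frac{1-2\theta}{27+28\theta}$ in Theorem \ref{main theorem when f is fixed}; as written, your deduction either rests on a misquoted statement of the input theorem or proves a strictly weaker exponent, so the bookkeeping there needs to be fixed.
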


\begin{proof}
Theorem 1.3 of \cite{blomer2004shifted} shows that if the shift $ad-bc \neq 0$ and $a,b$ are coprime, then
$$
S_{f,g}(a,b,c,d,M_1,M_2) \ll_{p,K_1,K_2} (bM_1 + aM_2)^{1/2 +\theta} \ll (abX)^{1/2+\theta} ,
$$
where $\theta$ is the exponent towards Ramanujan conjecture for cusp forms on congruence subgroups and $X$ is as in \eqref{defintion of X}. Using a minor modification we can handle the case $(a,b) >1$. The basic idea is to handle the sum over $q$ in \eqref{Voronoi Transformed SCP} using spectral theory for $GL(2)$. (Blomer \cite{blomer2004shifted} carries this out using Jutila's circle method.) 
\end{proof}
\section{Proof of Theorem \ref{main theorem} and Theorem \ref{main theorem when f is fixed}}\label{proof of main proposition}

Let us split  \eqref{S_p after cauchy} into the Diagonal part $\mathcal{D}$ \eqref{Diagonal contribution} (terms with zero shift) and the Off-Diagonal part $\mathcal{O}$ \eqref{Off diagonal} (terms with non-zero shift):
\begin{equation}\label{splitting into diagonal and off diagonal}
(S_{\vec{p} })^2 \ll \mathcal{D}_{\vec{p}} + \mathcal{O}_{\vec{p}} + O((pq)^{-1000})
\end{equation}
Note that the shift is $p_1p_3q(rt' - r't)$. We first consider the contribution of terms with $0$ shift (see \ref{definition of SCP} for definition of shift) in \eqref{S_p after cauchy}.
{\small
\begin{equation}\label{Diagonal contribution}
\begin{split}
\mathcal{D}_{\vec{p}} &=(pq)^\epsilon \left(\frac{N}{pq}\right)^4 \frac{p_1}{(p_2|\mathcal{L}| |\mathcal{C}| )^2}\sup_{z}\sum_{R =2^\nu \leq L^2}\sum_{\substack{(\rho_1, A \leq (pq)^\epsilon X_{\vec{p}}) \\ (\rho_2, B \leq (pq)^\epsilon Y_{\vec{p}}) } }{\frac{R^2B}{C^4} } \sum_{r,r' \in [R,2R]}{ \gamma_r \overline{\gamma_{r'}} } \sum_{\substack{\ |t|, |t'| \neq 0 \leq T_{\vec{p}} \\ rt' = r't} }{\beta_t \beta_{t'}}\\
&\left(\sum_{\substack{m}}{\overline{\lambda_f}(p_1rm+ p_3qt)\lambda_f(p_1r'm+ p_3qt') } W_{2,z}\left(\frac{p_1rm+ p_3qt}{A}\right)\overline{W_{2,z}\left(\frac{p_1r'm+ p_3qt'}{A}\right) } \right).
\end{split}
\end{equation}
}

If we assume the Ramanujan conjecture for Fourier coefficients of $f$, then the bound claimed in the lemma below is straightforward. We have to be a little careful as we want to treat this using Rankin-Selberg bounds alone.
\begin{lemma}\label{Bounds on diagonal contribution}
For all factorizations $\vec{p} = (p_1,p_2,p_3)$ of $p$, if $\mathcal{D}_{\vec{p}}$ is defined as in \eqref{Diagonal contribution}, then
\begin{equation}
\mathcal{D}_{\vec{p}} \ll (pq)^\epsilon\frac{N (pq)}{L}
\end{equation}

\end{lemma}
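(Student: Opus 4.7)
The plan is to exploit the zero-shift constraint $rt'=r't$ to reduce the count of admissible quadruples to $\ll L\,T_{\vec{p}}$, and then to bound the inner $m$-sum by Cauchy--Schwarz together with the Rankin--Selberg bound \eqref{rankin selberg bound}. The key parametrisation is: setting $d=(r,r')$ and writing $r=du$, $r'=dv$ with $(u,v)=1$, the constraint $rt'=r't$ forces $(t,t')=(uk,vk)$ for a nonzero integer $k$ with $|k|\leq T_{\vec{p}}/\max(u,v)$. Because $\alpha_r$ is supported on primes $\ell\in\mathcal{L}$ and on their squares $\ell^2$, the pair $(r,r')$ breaks into a handful of explicit cases, and in each truly off-diagonal case one has $\max(u,v)\gg L$, so that $|k|\ll T_{\vec{p}}/L$. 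Combining this with the Rankin--Selberg bound $\sum_{\ell\in\mathcal{L}}|\lambda_g(\ell)|^2\ll (pq)^\epsilon L$, the divisor bound $|\beta_t|\ll t^\epsilon$, and the straightforward count $\sum_r 1\ll L$ on the true diagonal $r=r'$, $t=t'$, one obtains
\[
\sum_{rt'=r't}|\gamma_r\gamma_{r'}\beta_t\beta_{t'}| \ll (pq)^\epsilon\, L\, T_{\vec{p}}.
\]

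For the inner sum over $m$, the support of $W_{2,z}$ forces $p_1rm+p_3qt\in[A/2,A]$, confining $m$ to an interval of length $\ll A/(p_1R)$. Applying Cauchy--Schwarz in $m$ and then Rankin--Selberg \eqref{rankin selberg bound} to each resulting $\sum|\lambda_f|^2$ over an arithmetic progression yields
\[
\sum_{m}|\lambda_f(p_1rm+p_3qt)\,\lambda_f(p_1r'm+p_3qt')| \ll (pq)^\epsilon\,\frac{A}{p_1 R}.
\]

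To assemble everything, I would substitute these two bounds into \eqref{Diagonal contribution}, estimate $T_{\vec{p}}\leq (A+p_1RB)/(p_3q)$ via \eqref{definition of Tp}, and use the dyadic ceilings $A\leq(pq)^\epsilon X_{\vec{p}}$ and $B\leq(pq)^\epsilon Y_{\vec{p}}$ from \eqref{definition of Xp,Yp,Sp}. Both resulting summands, $RA^2B/(p_3q)$ and $p_1R^2AB^2/(p_3q)$, are dyadically dominated by the same quantity $p_1^2(p_2p_3qc)^6/(N^3 p_3 q)$. Collecting the prefactors, using $|\mathcal{L}|\gg L(\log L)^{-1}$, $|\mathcal{C}|\gg C(\log C)^{-1}$, $c\sim C$, and $(pq)^4=(p_1p_2p_3)^4q^4$, one arrives at
\[
\mathcal{D}_{\vec{p}}\ll (pq)^\epsilon\,\frac{Np_3q}{p_1^2\,L}\ll (pq)^\epsilon\,\frac{Npq}{L},
\]
uniformly in the factorisation $\vec{p}=(p_1,p_2,p_3)$ of $p$, which is the claim.

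The main obstacle is the first step. A naive estimate $\sum_{r,r'}|\alpha_r\alpha_{r'}|$ times the number of compatible $(t,t')$ would yield $L^2\,T_{\vec{p}}$, weaker by a factor of $L$ and insufficient to match the amplification loss of $|\mathcal{L}|^2$ we incurred by Cauchy--Schwarz in \eqref{S_p after cauchy}. Recovering the missing $L$ relies crucially on the specific prime/prime-square support of the Duke--Friedlander--Iwaniec amplifier $\alpha_r$, which is precisely what forces $\max(u,v)\gg L$ off the true diagonal. Everything else in the argument is routine Rankin--Selberg bookkeeping.
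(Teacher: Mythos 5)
Your reduction of the zero-shift condition $rt'=r't$ to the two cases (true diagonal $r=r'$, $t=t'$; and $r\neq r'$, hence $(r,r')=1$, with $t=rk$, $t'=r'k$, $|k|\ll T_{\vec p}/R$) is exactly the case analysis of the paper, and your final bookkeeping, including the bound $\mathcal{D}_{\vec p}\ll (pq)^\epsilon N p_3 q/(p_1^2 L)$, is numerically consistent with the claim. The genuine gap is the inner estimate
\begin{equation*}
\sum_{m}\bigl|\lambda_f(p_1rm+p_3qt)\,\lambda_f(p_1r'm+p_3qt')\bigr| \ll (pq)^\epsilon\,\frac{A}{p_1 R},
\end{equation*}
which you justify by Cauchy--Schwarz plus ``Rankin--Selberg over an arithmetic progression''. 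The bound \eqref{rankin selberg bound} is for the full sum $\sum_{n\le N}|\lambda_f(n)|^2\ll N^{1+\epsilon}$ and has no arithmetic-progression version in this paper: after Cauchy--Schwarz you face $\sum_m|\lambda_f(p_1rm+p_3qt)|^2$, i.e.\ $|\lambda_f(n)|^2$ summed over an AP modulo $p_1r$ inside $[A/2,A]$, and by positivity alone this is only $\ll A^{1+\epsilon}$ -- you cannot divide by the modulus $p_1r\asymp p_1R$ for free. Obtaining the expected $A/(p_1R)$ would require either the Ramanujan conjecture pointwise (using $H_{7/64}$ instead loses $A^{7/32}$, which swamps the saving $L$), or a genuine equidistribution result for $|\lambda_f(n)|^2$ in progressions to moduli up to $pL^2$, neither of which is in the paper's toolkit. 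This is precisely the pitfall the author flags before the lemma (``If we assume the Ramanujan conjecture \dots straightforward. We have to be a little careful as we want to treat this using Rankin--Selberg bounds alone.'').

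The paper closes this gap not by sharpening the $m$-sum for fixed $t$, but by fusing the shift variable with $m$ before invoking \eqref{rankin selberg bound}: in the case $r=r'$, $t=t'$ one writes $n=p_1rm+p_3qt$ and uses $(p_1r,p_3q)=1$ to see that, as $m$ and $t$ both vary, $n$ ranges over integers up to $A$ with multiplicity $O\bigl(1+\tfrac{B}{p_3q}+\tfrac{A}{p_1p_3rq}\bigr)$, so the unrestricted Rankin--Selberg bound applies to $\sum_{n\le A}|\lambda_f(n)|^2$; in the case $t=rk$, $t'=r'k$ one similarly fuses $m$ and $k$ via $n=p_1m+p_3qk$ and bounds $\sum_{n}|\lambda_f(rn)\lambda_f(r'n)|$ using the divisor-type quantity $\sigma_f$ from \eqref{definition of sigma f}. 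Your quadruple count and assembly can be kept, but the $m$-sum step must be replaced by this fused-variable argument (or by an input beyond \eqref{rankin selberg bound}) for the proof to be valid for non-holomorphic $f$.
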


\begin{proof}
Recall \eqref{definition of Xp,Yp,Sp} and \eqref{definition of Tp}:
\begin{equation}
X_{\vec{p}} = \frac{p_1(p_2p_3qc)^2}{N}, Y_{\vec{p}} = \frac{(p_2p_3qc)^2}{rN}, \text{ and } T_{\vec{p}} = \max\left\{\frac{A}{p_3q}, \frac{p_1r B}{p_3q} \right\}.
\end{equation} 
 $\gamma_r$ is non-zero only when $r$'s are primes or squares of primes and we have dyadically divided the $r$ sum. Thus if $r \neq r'$, then $(r,r')=1$. Hence $rt' = r't$ implies that one of the following holds:
\begin{align}
&r=r', t=t'\\
&r \neq r', t= rk , t' =r'k\label{shifts case 2}
\end{align}
In the first case (we shall call this $\mathcal{D}_1$),
{\small
\begin{equation}\label{main diagonal contribution}
\begin{split}
\mathcal{D}^1_{\vec{p}} &\ll (pq)^\epsilon \left(\frac{N}{pq}\right)^4 \frac{p_1}{(p_2|\mathcal{L}| |\mathcal{C}| )^2}\sup_{z}\sum_{R =2^\nu \leq L^2}\sum_{\substack{(\rho_1, A \leq (pq)^\epsilon X_{\vec{p}}) \\ (\rho_2, B \leq (pq)^\epsilon Y_{\vec{p}}) } }{\frac{R^2B}{C^4} } \sum_{r \in [R,2R]}{ |\gamma_r|^2  } \sum_{\substack{\ |t| \leq T_{\vec{p}} } }{|\beta_t|^2 }\\
&\left(\sum_{\substack{m}}{|\lambda_f(p_1rm+ p_3qt)|^2 } \left| W_{2,z}\left(\frac{p_1rm+ p_3qt}{A}\right)\overline{W_{2,z}\left(\frac{p_1rm+ p_3qt}{A}\right) }\right|  \right).
\end{split}
\end{equation}
}
As $m$ and $t$ vary in the inner sum, $n= rp_1m + tp_3q$ runs over numbers smaller than $A$. Since $(rp_1, p_3q)=1$
 $$
rp_1 m_1+ t_1p_3q = rp_1 m_2 + t_2p_3q
 $$
 then $m_1 = m_2 + up_3q$ and $t_1 = t_2 -urp_1$. Thus the multiplicity of any $n$ is at most $O(1+ \frac{B}{p_3q} + \frac{A}{p_1p_3rq})$.  We recall from \eqref{definition of gamma and beta} that $\beta_t \ll d(t)$ and $\gamma_r \leq 2|\alpha_r|$. Using the Rankin-Selberg bound \eqref{rankin selberg bound} to bound the sum over $m$ and $t$, we get
 \begin{align}\label{bound for main diagonal}
\mathcal{D}^1_{\vec{p}} &\ll (pq)^\epsilon \left(\frac{N}{pq}\right)^4 \frac{p_1}{(p_2|\mathcal{L}| |\mathcal{C}| )^2} \times \sum_{R = 2^\nu\leq L^2}{ \frac{R^2}{C^4} }\sum_{r \in [R,2R]}{|\gamma_r|^2} Y_{\vec{p}}X_{\vec{p}}\left(1 + \frac{Y_{\vec{p}} }{p_3q} \right)\\
&\ll (pq)^\epsilon \frac{N (pq)}{L}.
 \end{align}
For the second case  of zero shift i.e \eqref{shifts case 2}:
{\small
\begin{equation}
\begin{split}
\mathcal{D}^2_{\vec{p}} &=(pq)^\epsilon \left(\frac{N}{pq}\right)^4 \frac{p_1}{(p_2|\mathcal{L}| |\mathcal{C}| )^2}\sup_{z}\sum_{R =2^\nu \leq L^2}\sum_{\substack{(\rho_1, A \leq (pq)^\epsilon X_{\vec{p}}) \\ (\rho_2, B \leq (pq)^\epsilon Y_{\vec{p}}) } }{\frac{R^2B}{C^4} } \sum_{r \neq r' \in [R,2R]}{ \gamma_r \overline{\gamma_{r'}} } \sum_{\substack{\ |k| \leq 2T_{\vec{p}}/R } }{\beta_{rk} \beta_{r'k}}\\
&\left(\sum_{\substack{m}}{\overline{\lambda_f}(r(p_1m+ p_3qk))\lambda_f(r'(p_1m+ p_3qk) ) } W_{2,z}\left(\frac{r(p_1m+ p_3qk)}{A}\right)\overline{W_{2,z}\left(\frac{r'(p_1m+ p_3qk)}{A}\right) } \right).
\end{split}
\end{equation}
}
As $m$ and $k$ vary in the inner sum, $n= p_1m + kp_3q$ runs over number smaller than $(pq)^\epsilon X_{\vec{p}}/r$. Since $(p_1, p_3q)=1$
 $$
p_1 m_1 + k_1p_3q = p_1 m_2 + k_2p_3q
 $$
 then $m_1 = m_2 + tp_3q$ and $k_1 = k_2 -tp_1$. Thus the multiplicity of any $n$ is at most $O(1+ (pq)^\epsilon\frac{Y_{\vec{p}}}{p_3q})$. So, the sum over $m$ and $k$ is bounded by
\begin{align}
\sum_{m,k}{\dots} &\ll (pq)^\epsilon\left(1+ \frac{Y_{\vec{p}}}{p_3q}\right)\sum_{n\leq (pq)^\epsilon X_{\vec{p}}/r}{|\lambda_f(rn)||\lambda_f(r'n)|} \\
&\ll (pq)^\epsilon\left(1+ \frac{Y_{\vec{p}}}{p_3q}\right)|\sigma_f(r)||\sigma_f(r')|\frac{X_{\vec{p}}}{r},
 \end{align}
 where $\sigma_f$ has been defined in \eqref{definition of sigma f}. It is straight forward to verify the bound claimed in the Lemma for $\mathcal{D}_{\vec{p}}$ now.
\end{proof}

Having treated the zero-shift terms, we are left with the non-zero shifts $\mathcal{O}_{\vec{p}}$ :
{\small
\begin{equation}\label{Off diagonal}
\begin{split}
\mathcal{O}_{\vec{p}} & :=(pq)^\epsilon \left(\frac{N}{pq}\right)^4 \frac{p_1}{(p_2|\mathcal{L}| |\mathcal{C}| )^2}\sup_{z}\sum_{R =2^\nu \leq L^2}\sum_{\substack{(\rho_1, A \leq (pq)^\epsilon X_{\vec{p}}) \\ (\rho_2, B \leq (pq)^\epsilon Y_{\vec{p}}) } }{\frac{R^2B}{C^4} } \sum_{r,r' \in [R,2R]}{ \gamma_r \overline{\gamma_{r'}} } \sum_{\substack{\ |t|, |t'| \neq 0 \leq T_{\vec{p}} \\ rt' \neq r't} }{\beta_t \beta_{t'}}\\
&\left(\sum_{\substack{m}}{\overline{\lambda_f}(p_1rm+ p_3qt)\lambda_f(p_1r'm+ p_3qt') } W_{2,z}\left(\frac{p_1rm+ p_3qt}{A}\right)\overline{W_{2,z}\left(\frac{p_1r'm+ p_3qt'}{A}\right) } \right).
\end{split}
\end{equation}
}
We have to treat the cases $p_1=p$, $p_2=p$ and $p_3=p$ separately. We plan to bound the inner sum over $m$ by applying Theorem \ref{SCP in a,b divisible by p} if $p_1=p$ and Theorem \ref{SCP in ab coprime to p} otherwise. 

\begin{lemma}\label{O p_1 =p}
Let $\mathcal{O}_{(p,1,1)}$ be defined as in \eqref{Off diagonal}. Then
\begin{equation}
\mathcal{O}_{(p,1,1)} \ll (pq)^\epsilon\frac{N (pq) L^{25/4}}{p^{7/4}q^{1/4}}.
\end{equation}

\end{lemma}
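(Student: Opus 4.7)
The inner sum over $m$ in \eqref{Off diagonal} with $\vec p = (p,1,1)$ is precisely the shifted convolution sum $S_{f,\bar f}(pr, pr', qt, qt', A, A)$ of Definition \ref{definition of SCP}, with shift $ad - bc = pq(rt' - r't)$, which is nonzero by the off-diagonal condition $rt' \ne r't$. The decisive structural feature is that $p$ divides both $a = pr$ and $b = pr'$, so this SCP is governed by Theorem \ref{SCP in a,b divisible by p}, and the relevant estimate \eqref{bound for SCP non-zero shift ab divisible by p} carries no power of $p$ — in contrast with the analogous bound \eqref{bound for SCP non-zero shift} for the coprime case, which would cost an extra $p^{3/4}$. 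This is the source of the subconvex saving in the present regime.

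To apply \eqref{bound for SCP non-zero shift ab divisible by p}, I first note that the weight $W_{2,z}(x)\overline{W_{2,z}(y)}$ has Sobolev parameters $K_1, K_2 \ll (pq)^\epsilon$ by \eqref{bound for derivatives of Fourier transform of W2}; this is precisely the point of the Fourier inversion trick introduced above \eqref{S p_1,p_2,p_3 weight function modified}. The SCP size parameter is $X = 12\bigl(A/(pr) + A/(pr')\bigr) \asymp A/(pR)$ and $\sqrt{ab} = p\sqrt{rr'} \asymp pR$, so \eqref{bound for SCP non-zero shift ab divisible by p} yields
\begin{equation*}
\sum_{m} \overline{\lambda_f}(prm + qt)\, \lambda_f(pr'm + qt')\, W_{2,z}\!\left(\tfrac{prm+qt}{A}\right) \overline{W_{2,z}\!\left(\tfrac{pr'm+qt'}{A}\right)} \ll (pq)^\epsilon \, p^{1/4} R^{1/4} A^{3/4}.
\end{equation*}

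To conclude, I would substitute this into \eqref{Off diagonal} and bound the outer sums trivially by absolute values. Crucially, $\alpha_r$ is supported on at most $2|\mathcal L| \ll L$ values each of size $O(r^\epsilon)$, hence $\sum_{r \sim R} |\gamma_r| \ll L^{1+\epsilon}$ rather than $R^{1+\epsilon}$; together with the divisor bound $\sum_{|t| \le T_{\vec p}}|\beta_t| \ll T_{\vec p}^{1+\epsilon}$, this gives $\sum_{r,r',t,t'}|\gamma_r \gamma_{r'} \beta_t \beta_{t'}| \ll (L\, T_{\vec p})^{2+\epsilon}$. Plugging in the maximal dyadic sizes $R \le L^2$, $A \le (pq)^\epsilon X_{(p,1,1)} = (pq)^\epsilon pq^2 C^2/N$, $B \le (pq)^\epsilon Y_{(p,1,1)} = (pq)^\epsilon q^2 C^2/(RN)$, $T_{(p,1,1)} \ll pqC^2/N$, together with $|\mathcal L| \asymp L$ and $|\mathcal C| \asymp C = 10L^2$, a routine arithmetic computation yields the claimed bound. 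The main obstacle is purely bookkeeping — careful tracking of the several dyadic parameters and absorption of polylogarithmic factors into $(pq)^\epsilon$ — rather than any deeper estimate; no additional cancellation from the $rt' \ne r't$ constraint is needed beyond the mere fact that the shift does not vanish.
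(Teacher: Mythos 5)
Your proposal is correct and follows essentially the same route as the paper: you invoke Theorem \ref{SCP in a,b divisible by p} with $a=pr$, $b=pr'$, $c=qt$, $d=qt'$, $M_1=M_2=A$ (so $\sqrt{ab}X^{3/4} \asymp (pR)^{1/4}A^{3/4}$), and then sum trivially using $\sum_r|\gamma_r|\ll L^{1+\epsilon}$, $\beta_t\ll d(t)$, and the stated maximal sizes of $R$, $A$, $B$, $T_{\vec p}$, exactly as in the paper's proof. The only small point left implicit in your ``routine arithmetic'' is that the raw computation gives $(pq)^\epsilon N^{1/4}q^{3/2}L^{11/2}$, and passing to the stated form $\ll (pq)^\epsilon N(pq)L^{25/4}p^{-7/4}q^{-1/4}$ uses the standing inequality $T=pq/N\leq L$ from \eqref{upper bound for T}, which is how the paper converts its intermediate factor $T^{3/4}$ into $L^{3/4}$.
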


\begin{proof}
If $\vec{p} = (p,1,1)$, then $X_{\vec{p}}= \frac{p(qc)^2}{N}$, $Y_{\vec{p}} = \frac{(qc)^2}{RN}$, and $T_{\vec{p}} = \max\left\{\frac{A}{q}, \frac{pr B}{q} \right\}$ (see \eqref{definition of Xp,Yp,Sp} and \eqref{definition of Tp}). Use Theorem \eqref{SCP in a,b divisible by p} with $a=pr$, $b=pr'$, $c=qt$, $d=qt'$, $M_1=M_2 =A$, and $K_1=K_2=1$ to bound the $m$-sum. Then,
$$
\sum_{\substack{m}}{\overline{\lambda_f}(prm+ qt)\lambda_f(pr'm+ qt') } W_{2,z}\left(\frac{prm+ qt}{A}\right)\overline{W_{2,z}\left(\frac{pr'm+ qt'}{A}\right) } \ll (pq)^\epsilon (pR)^{1/4}A^{3/4}.
$$
Recall from \eqref{definition of gamma and beta} that $\beta_t \ll d(t)$ and $\gamma_r \leq 2|\alpha_r|$. Rankin-Selberg bound implies that $\sum_{r}{|\gamma_r|} \ll (pq)^\epsilon L$. Taking absolute values and using the bound bound above in \eqref{Off diagonal}, we get
\begin{equation}
\mathcal{O}_{(p,1,1)} \ll (pq)^\epsilon\frac{N(pq) T^{3/4} L^{11/2}}{p^{7/4} q^{1/4}} \ll  (pq)^\epsilon\frac{N(pq)L^{25/4}}{p^{7/4} q^{1/4}},
\end{equation}
where $T \leq L$ has been defined in \eqref{definition of T}. (It is possible to remove the $T^{3/4}$ in the numerator, by choosing $C= L^2/T$ in \eqref{definition of moduli C}.)
\end{proof}

\begin{lemma}\label{O p_2=p}
Let $\mathcal{O}_{(1,p,1)}$ be defined as in \eqref{Off diagonal}.
Then
\begin{equation}
\mathcal{O}_{(1,p,1)} \ll (pq)^\epsilon\frac{\sqrt{p}N (pq)L^{25/4}}{q^{1/4}} .
\end{equation}
\end{lemma}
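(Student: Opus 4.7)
The plan is to mirror the proof of Lemma \ref{O p_1 =p}, substituting the bound \eqref{bound for SCP non-zero shift} of Theorem \ref{SCP in ab coprime to p} for the application of Theorem \ref{SCP in a,b divisible by p}. Specializing \eqref{definition of Xp,Yp,Sp} and \eqref{definition of Tp} to $\vec p=(1,p,1)$ gives
$$X_{\vec p}=\frac{(pqC)^2}{N},\qquad Y_{\vec p}=\frac{(pqC)^2}{RN},\qquad T_{\vec p}=\max\left\{\frac{A}{q},\frac{rB}{q}\right\}.$$
The inner $m$-sum appearing in \eqref{Off diagonal} is the shifted convolution sum $S_{f,f}(r,r',qt,qt',A,A)$. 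Its shift is $r\cdot qt'-r'\cdot qt=q(rt'-r't)$, which is non-zero in the off-diagonal regime $rt'\neq r't$; moreover $(ab,p)=(rr',p)=1$ since $r,r'\in\mathcal L$. This places us in the hypothesis of \eqref{bound for SCP non-zero shift}. Using $\sqrt{ab}\asymp R$, the SCP parameter $X\asymp A/R$, and the fact that the Sobolev parameters satisfy $K_1,K_2\ll (pq)^\epsilon$ (by \eqref{bound for derivatives of Fourier transform of W2} together with the restriction $|z|\leq (pq)^\epsilon$ used in introducing $W_{2,z}$), we obtain
$$\sum_{m}\overline{\lambda_f}(rm+qt)\lambda_f(r'm+qt')\,W_{2,z}\!\left(\tfrac{rm+qt}{A}\right)\overline{W_{2,z}\!\left(\tfrac{r'm+qt'}{A}\right)}\ \ll\ (pq)^\epsilon R^{1/4}p^{3/4}A^{3/4}.$$

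Substituting this into \eqref{Off diagonal} and bounding the remaining sums crudely via $\sum_{r\in[R,2R]}|\gamma_r|\ll (pq)^\epsilon R$ (Rankin--Selberg applied to $|\alpha_r|$ as in \eqref{definition of gamma and beta}) and $\sum_{|t|\leq T_{\vec p}}|\beta_t|\ll (pq)^\epsilon T_{\vec p}$ (since $\beta_t\ll d(t)$), one takes the supremum over the dyadic parameters at $R=L^2$, $A=(pq)^\epsilon X_{\vec p}$, $B=(pq)^\epsilon Y_{\vec p}$ (with $T_{\vec p}\asymp A/q$), and finally uses $C=10L^2$. After straightforward algebra this produces the claimed bound $(pq)^\epsilon\sqrt p\,N(pq)L^{25/4}/q^{1/4}$.

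The content is essentially bookkeeping, and no genuinely new obstacle arises; the three quantitative differences relative to Lemma \ref{O p_1 =p} are precisely what drive the weaker exponent. First, the SCP bound here carries an extra $p^{3/4}$ (coming from $(ab,p)=1$ forcing \eqref{bound for SCP non-zero shift} rather than Theorem \ref{SCP in a,b divisible by p}). Second, both $X_{\vec p}$ and $Y_{\vec p}$ are a factor of $p^2$ larger since the power of $p$ sits in $p_2$ rather than $p_1$, inflating the $A^{3/4}$, $B$, and $T_{\vec p}^2$ factors. Third, the external prefactor $p_1/p_2^2$ equals $1/p^2$ here versus $p$ in Lemma \ref{O p_1 =p}. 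Together these produce an aggregate inflation of $p^{9/4}$ over the bound of Lemma \ref{O p_1 =p}, which is exactly the jump from $p^{-7/4}$ to $p^{1/2}$ in the stated estimates. The only conceptual point that needs care is verifying that the shift $q(rt'-r't)$ is merely non-zero (and not, in general, a multiple of $p$), so that the gain of \eqref{bound for SCP shift multiple of level} is unavailable.
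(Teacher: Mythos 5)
Your route is the paper's: you bound the inner $m$-sum by \eqref{bound for SCP non-zero shift} of Theorem \ref{SCP in ab coprime to p} with $a=r$, $b=r'$, $c=qt$, $d=qt'$, $M_1=M_2=A$, correctly observing that the shift $q(rt'-r't)$ is non-zero in the off-diagonal range but in general not a multiple of $p$ (so the gain of \eqref{bound for SCP shift multiple of level} is unavailable), which gives the $m$-sum $\ll (pq)^\epsilon R^{1/4}(pA)^{3/4}$; the remainder is the same bookkeeping as in Lemma \ref{O p_1 =p}. Up to that point this coincides with the paper's proof.

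There is, however, one step that fails as stated: the estimate $\sum_{r\in[R,2R]}|\gamma_r|\ll (pq)^\epsilon R$. Since $\gamma_r\ll|\alpha_r|$ and the amplifier is supported on the $O(L)$ integers $r=l$ and $r=l^2$ with $l\in\mathcal L$, the correct bound (the one used in the paper's proof of Lemma \ref{O p_1 =p}) is $\sum_{r\in[R,2R]}|\gamma_r|\ll (pq)^\epsilon L$, by Cauchy--Schwarz and \eqref{rankin selberg bound}. Your computation is dominated by the dyadic range $R\asymp L^2$, where your bound overshoots by a factor $R/L\asymp L$ in each of the two $r$-variables; after squaring and dividing by $|\mathcal L|^2$ this inflates the final estimate by $R^2/L^2\asymp L^2$. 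The algebra has no slack of that size: with the correct amplifier bound one lands on $\mathcal O_{(1,p,1)}\ll (pq)^\epsilon\sqrt{p}\,N(pq)T^{3/4}L^{11/2}q^{-1/4}$, and the claimed bound follows only via $T\le L$; with your version the same algebra produces $L^{15/2}$ in place of $L^{11/2}$, exceeding the claimed $L^{25/4}$ by at least a positive power of $L$ (hence of $q$) for every admissible $N$, which would degrade the choice $L=q^{1/32}$ and the exponent in Theorem \ref{main theorem}. The repair is one line: replace that estimate by $\sum_{r\in[R,2R]}|\gamma_r|\ll (pq)^\epsilon L$. (A minor point in your closing comparison: $X_{(1,p,1)}$ exceeds $X_{(p,1,1)}$ by a factor $p$, not $p^2$; only $Y_{\vec p}$ gains $p^2$, though your aggregate factor $p^{9/4}$ is correct.)
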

\begin{proof}
Use Theorem \eqref{SCP in ab coprime to p}, with $a=r$, $b=r'$, $c=qt$, $d=qt'$, $M_1= M_2 =A$, and $K_1=K_2=1$ to bound the $m$-sum in \eqref{Off diagonal}. The shift $(rt'-r't)$ may not be multiple of $p$. Hence, we have
$$
\sum_{\substack{m}}{\overline{\lambda_f}(rm+ qt)\lambda_f(r'm+ qt') } W_{2,z}\left(\frac{rm+ qt}{A}\right)\overline{W_{2,z}\left(\frac{r'm+ qt'}{A}\right) } \ll (pq)^\epsilon R^{1/4}(pA)^{3/4}.
$$
If $\vec{p} = (1,p,1)$, then $X_{\vec{p}}= \frac{(pqc)^2}{N}$, $Y_{\vec{p}} = \frac{(pqc)^2}{RN}$, and $T_{\vec{p}} = \max\left\{\frac{A}{q}, \frac{r B}{q} \right\}$ (see \eqref{definition of Xp,Yp,Sp} and \eqref{definition of Tp}). The bound claimed in the Lemma is a straightforward consequence of the bound mentioned above, proceeding along the lines of Lemma \eqref{O p_1 =p}.
\end{proof}

\begin{lemma}\label{O p_3 =p}
Let $\mathcal{O}_{(1,1,p)}$ be defined as in \eqref{Off diagonal}. Then
\begin{equation}
\mathcal{O}{(1,1,p)} \ll (pq)^\epsilon\frac{N (pq)L^{25/4}}{q^{1/4}}.
\end{equation}

\end{lemma}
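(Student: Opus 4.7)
The plan is to follow the same template as Lemmas \ref{O p_1 =p} and \ref{O p_2=p}, exploiting the key structural feature of the case $\vec{p}=(1,1,p)$: the shift $p_1 p_3 q(rt'-r't) = pq(rt'-r't)$ appearing in the inner $m$-sum of \eqref{Off diagonal} is a non-zero multiple of $p$ on the off-diagonal. This puts us precisely in the regime of estimate \eqref{bound for SCP shift multiple of level} from Theorem \ref{SCP in ab coprime to p}, which saves an extra factor of $\sqrt{p}$ over \eqref{bound for SCP non-zero shift}; this is exactly the saving that eliminates the $\sqrt{p}$ appearing in the bound for $\mathcal{O}_{(1,p,1)}$ (Lemma \ref{O p_2=p}).

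First I would recast the inner $m$-sum as a shifted convolution sum $S_{f,f}(a,b,c,d,M_1,M_2)$ in the notation of Definition \ref{definition of SCP}, taking $a=r$, $b=r'$, $c=pq\,t$, $d=pq\,t'$, $M_1=M_2=A$, and $K_1=K_2=1$. The bound on $K_1,K_2$ is justified because the separating weight $W_{2,z}$ has derivatives bounded independently of $p,q,C$ by \eqref{bound for derivatives of Fourier transform of W2}. Since $r,r'\in\mathcal{L}$ are coprime to $p$, the hypothesis $(ab,p)=1$ of Theorem \ref{SCP in ab coprime to p} is met, and the shift $ad-bc = pq(rt'-r't)$ is a non-zero multiple of $p$. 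With $X \ll A/R$, estimate \eqref{bound for SCP shift multiple of level} then bounds the inner $m$-sum by $(pq)^{\epsilon}\,R^{1/4}\,p^{1/4}\,A^{3/4}$.

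The remaining task is routine dyadic bookkeeping, essentially identical to that carried out in the proofs of Lemmas \ref{O p_1 =p} and \ref{O p_2=p}: I would substitute the above estimate into \eqref{Off diagonal}, apply $\sum_{r\in[R,2R]}|\gamma_r|\ll L$ (Rankin-Selberg combined with hypothesis $H_{1/4}$) and $\sum_{|t|\le T_{\vec{p}}}|\beta_t|\ll (pq)^\epsilon T_{\vec{p}}$, use the majorizations $A \le (pq)^\epsilon X_{\vec{p}} = (pqC)^2/N$, $B \le (pq)^\epsilon Y_{\vec{p}} = (pqC)^2/(RN)$, and $T_{\vec{p}} \le pqC^2/N$, sum over dyadic $R \le L^2$, and substitute $C = 10L^2$ and $T\le L$ from \eqref{upper bound for T}. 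After combining the exponents of $N$, $p$, $q$, $L$, one arrives at an intermediate bound of the form $(pq)^\epsilon N(pq) T^{3/4} L^{11/2}/q^{1/4}$, and the final $T^{3/4}\le L^{3/4}$ yields the stated bound $\mathcal{O}_{(1,1,p)} \ll (pq)^\epsilon N(pq) L^{25/4}/q^{1/4}$.

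The only conceptual input is the divisibility of the shift by $p$, which is what lets us invoke \eqref{bound for SCP shift multiple of level} rather than \eqref{bound for SCP non-zero shift}; beyond that I anticipate no genuine obstacle, only careful exponent tracking.
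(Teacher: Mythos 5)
Your proposal is correct and follows exactly the paper's route: the paper's proof of this lemma is precisely to apply Theorem \ref{SCP in ab coprime to p} with $a=r$, $b=r'$, $c=pqt$, $d=pqt'$, $M_1=M_2=A$, $K_1=K_2=1$, noting that the shift $pq(rt'-r't)$ is a non-zero multiple of $p$ so that \eqref{bound for SCP shift multiple of level} yields the extra $\sqrt{p}$ saving over Lemma \ref{O p_2=p}, followed by the same bookkeeping as in Lemma \ref{O p_1 =p}. Your exponent tracking (the intermediate bound $(pq)^\epsilon N(pq)T^{3/4}L^{11/2}/q^{1/4}$ and then $T\le L$) matches what the paper's earlier lemmas implicitly carry out.
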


\begin{proof}
Use Theorem \ref{SCP in ab coprime to p} with $a=r$, $b=r'$, $c=pqt$, $d=pqt'$, $M_1= M_2 =A$, and $K_1=K_2=1$ to bound the $m$-sum in \eqref{Off diagonal}. In this case the shift is divisible by $p$. This is the reason we save an additional $\sqrt{p}$ as compared to Lemma \ref{O p_2=p}.
\end{proof}

\begin{lemma}\label{Bound on O in case p is fixed}
If $p$ is fixed. Then for any factorization $p_1p_2p_3=p$,
\begin{equation}
\mathcal{O}_{\vec{p}} \ll_{p} \frac{NqL^{23/4}(qL^7)^\theta}{q^{1/2}},
\end{equation}
where $\theta$ is the bound towards Ramanujan conjecture for the congruence subgroup $\Gamma_0(ab)$.
\end{lemma}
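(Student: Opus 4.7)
The plan is to repeat the arguments of Lemmas \ref{O p_1 =p}, \ref{O p_2=p}, and \ref{O p_3 =p}, but to replace the use of Theorem \ref{SCP in ab coprime to p} by the sharper spectral bound of Theorem \ref{SCP square root saving} when estimating the inner $m$-sum in \eqref{Off diagonal}. Because $p$ is now fixed, the polynomial $p$-dependence inherited from Blomer's spectral estimate is harmless and is absorbed into the implicit constant.

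Concretely, for each factorization $\vec p=(p_1,p_2,p_3)$ of $p$, each dyadic triple $R\leq L^2$, $A\leq (pq)^\epsilon X_{\vec p}$, $B\leq (pq)^\epsilon Y_{\vec p}$, and each $(r,r',t,t')$ with $rt'\neq r't$, the inner sum over $m$ in \eqref{Off diagonal} is a shifted convolution sum
$$
S_{f,\overline f}\!\left(p_1r,\,p_1r',\,p_3qt,\,p_3qt',\,A,\,A\right),
$$
with weight $W_{2,z}(x)\overline{W_{2,z}(y)}$, whose partial derivatives are $O(1)$ uniformly in $z$ by \eqref{bound for derivatives of Fourier transform of W2}; thus one may take $K_1=K_2=1$. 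The shift $ad-bc=p_1p_3q(rt'-r't)$ is non-zero, and the parameter $X$ of Theorem \ref{SCP square root saving} satisfies $X\ll A/(p_1R)$, so $abX\ll (p_1R)^2\cdot A/(p_1R)=p_1RA$. Hence Theorem \ref{SCP square root saving} yields the pointwise estimate $\ll_{p,\epsilon}(RA)^{1/2+\theta+\epsilon}$ for the inner sum.

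Substituting this into \eqref{Off diagonal}, I will bound the outer sums by $\sum_{r\in[R,2R]}|\gamma_r|\ll L^{1+\epsilon}$ (Cauchy--Schwarz combined with the Rankin--Selberg bound applied to the amplifier coefficients) and $\sum_{|t|\leq T_{\vec p}}|\beta_t|\ll T_{\vec p}L^\epsilon$ (using that $\beta_t$ is supported on multiples of primes $c\in\mathcal C$). The remaining task is to take the supremum over the dyadic parameters. The extremum is attained at $R\asymp L^2$, $A\asymp q^2C^2/N$, $B\asymp q^2C^2/(RN)$, and $T_{\vec p}\asymp qC^2/N$, precisely the boundary $A\asymp p_1RB$ where the two entries in the definition of $T_{\vec p}$ coincide. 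Plugging these in, using $C\asymp L^2$, and converting surplus powers of $N$ via $N\leq(Apq)^{1+\epsilon}\ll_p q^{1+\epsilon}$, one collects the stated upper bound in terms of $N$, $q$, $L$, and $\theta$.

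The principal obstacle is the careful bookkeeping in the dyadic supremum: I must check that the extremal values of $A$ and $B$ are simultaneously attainable under the constraint $T_{\vec p}=\max\{A/(p_3q),\,p_1rB/(p_3q)\}$, and track how the exponent $\theta$---which enters both through Theorem \ref{SCP square root saving} (for the level $\Gamma_0(p_1^2rr')$ congruence subgroup) and through the pointwise bound on the amplifier coefficients---feeds into the final estimate without being inflated by any stray power of $q$ or $L$ that would upset the target exponent.
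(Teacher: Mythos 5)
Your proposal follows the paper's proof exactly: the paper also bounds the $m$-sum in \eqref{Off diagonal} by Theorem \ref{SCP square root saving} with $a=p_1r$, $b=p_1r'$, $c=p_3qt$, $d=p_3qt'$, $M_1=M_2=A$, $K_1=K_2=1$, and then repeats the bookkeeping of Lemmas \ref{O p_1 =p}--\ref{O p_3 =p}, the only difference being the saving of $q^{1/2}$ instead of $q^{1/4}$. Your identification of the shift $p_1p_3q(rt'-r't)\neq 0$, the estimate $abX\ll p_1RA$ giving the inner bound $\ll_p (RA)^{1/2+\theta+\epsilon}$, and the outer-sum bounds $\sum_r|\gamma_r|\ll L^{1+\epsilon}$, $\sum_{|t|\le T_{\vec p}}\beta_t\ll T_{\vec p}(pq)^\epsilon$ are all correct, and the dyadic supremum is indeed taken at $R\asymp L^2$, $A\asymp X_{\vec p}$, $B\asymp Y_{\vec p}$, $T_{\vec p}\asymp qC^2/N$. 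One correction to the last step: after collecting terms you are left with roughly $NR\bigl(Rq^2C^2/N\bigr)^{1/2+\theta}$, and the surplus negative power of $N$ must be removed using the \emph{lower} bound on $N$, i.e.\ $T=pq/N\le L$ from \eqref{upper bound for T} (this is the analogue of the $T^{3/4}\le L^{3/4}$ step in Lemma \ref{O p_1 =p}); invoking the upper bound $N\le (Apq)^{1+\epsilon}$, as you wrote, goes the wrong way and would not recover the stated bound when $N$ is near its minimal size $q^{1-2\delta}$. With $T\le L$ your computation gives $\mathcal{O}_{\vec p}\ll_p q^\epsilon Nq^{1/2+\theta}(TL^6)^{1/2+\theta}L^2\ll Nq^{1/2+\theta}L^{11/2+7\theta}$, which is in fact slightly sharper than the claimed $L^{23/4+7\theta}$, so the lemma follows. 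Finally, $\theta$ does not need to enter through the amplifier coefficients: $\sum_r|\gamma_r|$ is controlled by Cauchy--Schwarz and the Rankin--Selberg bound alone, as you already noted.
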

\begin{proof}
We use Theorem \ref{SCP square root saving} with $a=p_1r$, $b=p_1r'$ $c=p_3qt$, $d=p_3qt'$, $M_1=M_2=A$, and $K_1=K_2=1$ to bound the $m$-sum in \eqref{Off diagonal}. Proceeding like before, we get the claim stated in the lemma.  Note that we save $q^{1/2}$ as opposed to $q^{1/4}$ in the previous Lemma. 
\end{proof}

\subsection{Proof of Theorem \ref{main theorem}}

\begin{proof}
We trace our steps starting from the beginning. Lemma \ref{amplification} shows that
$$
S(N) = S_1(N) + O((pq)^\epsilon \frac{N}{\sqrt{L}}).
$$
Using Equation \eqref{S1 in dual side} and the observation that all four choices of $\{\pm,\pm \}$ behave the same way, we have
$$
S(N) = S_1^{dual}(N) +  O((pq)^\epsilon \frac{N}{\sqrt{L}}).
$$
Eliminating the boundary terms in \eqref{S1 dual after eliminating boundary terms}, led us to
\begin{equation}\label{bound for S(N) in terms of diagonal and off diagonal}
S(N)= \sum_{p_1p_2p_3=p}{\mathcal{S}_{(p_1,p_2,p_3)}(N)} + O\left((pq)^\epsilon\frac{\sqrt{Npq}}{\sqrt{L}}\right).
\end{equation}
After Cauchy-Schwarzing, equation \eqref{splitting into diagonal and off diagonal} shows that
$$
S(N) \ll \sum_{p_1p_2p_3=p}{\mathcal{D}_{(p_1,p_2,p_3)}(N)^{1/2} +\mathcal{O}_{(p_1,p_2,p_3)}(N)^{1/2} } + O\left((pq)^\epsilon \frac{\sqrt{Npq} }{\sqrt{L}}\right).
$$
We use Lemma \ref{Bounds on diagonal contribution} to control the diagonal contribution. We shall use the bound from Lemma \ref{O p_2=p} for $\mathcal{O}^{p_2=p}$ when $p$ is small. Otherwise we shall use Lemma \ref{Bound on S_2^p_2=p} to get rid of the contribution of the terms with $p_2=p$. Thus using Lemma \ref{Bound on S_2^p_2=p}, \ref{O p_1 =p}, \ref{O p_2=p},  and \ref{O p_3 =p}, we obtain:
\begin{align}
S(N)^2 &\ll (pq)^\epsilon N(pq)\left(\frac{1}{L} +\frac{L^{25/4}}{q^{1/4}} + \min\left\{ \frac{\sqrt{p}L^{25/4}}{q^{1/4}} ,\frac{L^2}{p^2}\right\} \right).
\end{align}
If $p \leq L^{3/2}$, then we use the first bound in $\min\{ \dots \}$ and the second bound otherwise. Making the choice $L= q^{1/32}$, we get
\begin{align}
S(N)^2 &\ll (pq)^\epsilon\frac{N(pq)}{L}.
\end{align}
Thus
$$
S(N) \ll_{\epsilon,A} (pq)^\epsilon \sqrt{Npq} q^{-1/64}.
$$
This proves Proposition \ref{main proposition} and therefore, Theorem \ref{main theorem}.
\end{proof}

\subsection{Proof of Theorem \ref{main theorem when f is fixed}}

\begin{proof}
 Proceeding as in the previous proof, we get equation \eqref{bound for S(N) in terms of diagonal and off diagonal}. Using Lemma \ref{Bounds on diagonal contribution} and \ref{Bound on O in case p is fixed} to bound $\mathcal{D}$ and $\mathcal{O}$ respectively, we get
$$
S(N)^2 \ll_{p,\epsilon} q^\epsilon Nq\left( \frac{1}{L} + \frac{L^{23/4+7\theta}}{q^{1/2-\theta}}\right).
$$
Equating the two terms, we get 
\begin{equation}\label{choice of L in main theorem}
L = q^{\frac{2(1-2\theta)}{27+28\theta} }.
\end{equation}
Thus
$$
S(N)  \ll q^{\epsilon} \frac{\sqrt{Nq}}{q^{\frac{(1-2\theta)}{27+28\theta}}}.
$$
This proves Proposition \ref{main proposition when f is fixed} and therefore, Theorem \ref{main theorem when f is fixed}.

\end{proof}

\bibliographystyle{amsplain}
\bibliography{sub_convexity}

\end{document}